\providecommand{\U}[1]{\protect\rule{.1in}{.1in}}
\numberwithin{equation}{section}
\theoremstyle{plain}
\newtheorem{thm}{Theorem}[section]
\newtheorem{lem}[thm]{Lemma}
\newtheorem{cor}[thm]{Corollary}
\newtheorem{prop}[thm]{Proposition}
\theoremstyle{definition}
\newtheorem{defn}[thm]{Definition}
\newtheorem{ex}[thm]{Example}
\newtheorem{rem}[thm]{Remark}
\newcommand{\n}{\noindent}
\def\1{\scalebox{0.94}{1}\hspace{-0.35em}1}
\begin{document}
\title{Type $A$ Distributions:\\Infinitely Divisible Distributions \\Related to Arcsine Density }
\date{}
\maketitle
\subjclass{ }

\begin{center}
Makoto Maejima,\footnote{Department of Mathematics, Keio University, 3-14-1,
Hiyoshi, Yokohama 223-8522, Japan} V\'{\i}ctor P\'{e}rez-Abreu
\footnote{Department of Probability and Statistics, Centro de
Investigaci\'{o}n en Matem\'{a}ticas, CIMAT, Apdo. Postal 402, Guanajuato,
Gto. 36000, Mexico} \footnote{Most of this work was done while the author
visited Keio University in Japan. He gratefully acknowledges the hospitality
and financial support during his stay} and Ken-iti Sato\footnote{Hachiman-yama
1101-5-103, Tenpaku-ku, Nagoya 468-0074, Japan}
\end{center}

\vskip 5mm

\begin{abstract}
Two transformations $\mathcal{A}_{1}$ and $\mathcal{A}_{2}$ of L\'{e}vy
measures on $\mathbb{R}^{d}$ based on the arcsine density are studied and their
relation to general Upsilon transformations is considered. The domains of
definition of $\mathcal{A}_{1}$ and $\mathcal{A}_{2}$ are determined and it is
shown that they have the same range. Infinitely divisible distributions on
$\mathbb{R}^{d}$ with L\'{e}vy measures being in the common range are called
type $A$ distributions and expressed as the law of a stochastic integral 
$\int_0^1\cos (2^{-1}\pi t)dX_t$ with
respect to L\'{e}vy process $\{X_t\}$. \ This new class includes as a proper subclass
the Jurek class of distributions. It is shown that generalized type $G$
distributions are the image of type $A$ distributions under a mapping defined
by an appropriate stochastic integral. $\mathcal{A}_{2}$ is identified as an
Upsilon transformation, while $\mathcal{A}_{1}$ is shown to be not.

\end{abstract}

\vskip 5mm
\n
{\it Keywords}:\,\, infinitely divisible distribution; arcsine density; 
L\'evy measure; type $A$ distribution; generalized type $G$ distribution;
general Upsilon transformation

\setlength{\baselineskip}{18pt} \setlength{\parindent}{1.8pc}

\vskip10mm

%%%%%%%%%%%%%%%%% Section 1 %%%%%%%%%%%%%%%%%

\section{Introduction}

Let $I(\mathbb{R }^{d})$ denote the class of all infinitely divisible
distributions on $\mathbb{R }^{d}$. For $\mu\in I(\mathbb{R}^{d})$, we use the
L\'{e}vy-Khintchine representation of its characteristic function
$\widehat{\mu}(z)$ given by
\begin{align*}
\widehat{\mu}(z)=\exp\bigg\{-\frac{1}{2}{}\langle\Sigma z,  &  z\rangle
\!+\mathrm{i} \langle\gamma,z\rangle\\
&  +\int_{\mathbb{R}^{d}}\left(  \mathrm{e}^{\mathrm{i}\langle x,z\rangle
}\!-1- \frac{\mathrm{i}\langle x.z\rangle}{1+\left\vert x\right\vert ^{2}%
}\right)  \nu(\mathrm{d}x)\bigg\},\quad z\in\mathbb{R}^{d},
\label{LevyKhintRep}%
\end{align*}
where $\Sigma$ is a symmetric nonnegative-definite $d\times d$ matrix,
$\gamma\in\mathbb{R}^{d}$ and $\nu$ is a measure on $\mathbb{R}^{d}$ (called
the L\'{e}vy measure) satisfying $\nu(\{0\})=0$ and $\int_{\mathbb{R}^{d}%
}(1\wedge\left\vert x\right\vert ^{2})\nu(\mathrm{d}x)<\infty. $ The triplet
$(\Sigma,\nu,\gamma)$ is called the L\'{e}vy-Khintchine triplet of $\mu\in
I(\mathbb{R}^{d})$. Let $\mathfrak{M}_{L}(\mathbb{R}^{d})$ denote the class of
L\'evy measures of $\mu\in I(\mathbb{R}^{d})$. The class of L\'evy measures
$\nu$ on $\mathbb{R}^{d}$ satisfying $\int_{\mathbb{R}^{d}%
}(1\wedge\left\vert x\right\vert )\nu(\mathrm{d}x)<\infty$ is denoted by
$\mathfrak{M}_{L}^{1}(\mathbb{R}^{d})$. Sometimes we write $\mathfrak{M}%
_{L}^{2}(\mathbb{R}^{d})=\mathfrak{M}_{L}(\mathbb{R}^{d})$. A measure $\nu$ on
$(0,\infty)$ is also called a L\'evy measure if it satisfies $\int%
_{(0,\infty)}(1\wedge x^{2}) \nu(\mathrm{d}x)<\infty$, and denote by
$\mathfrak{M}_{L}({(0,\infty)})$ the class of all L\'evy measures on
$(0,\infty)$.

Let
\begin{equation}
a(x;s)=%
\begin{cases}
\pi^{-1}(s-x^{2})^{-1/2}, & \quad|x|<{s}^{1/2},\\
0, & \quad\text{otherwise},
\end{cases}
\label{arcs0}%
\end{equation}
which is the density of the symmetric arcsine law with parameter $s>0$. For a
L\'{e}vy measure $\rho\in\mathfrak{M}_{L}((0,\infty))$, define
\begin{equation}
\ell(x)=\int_{\mathbb{R}_{+}}a(x;s)\rho(\mathrm{d}s),\quad x\in\mathbb{R}.
\label{Levden}%
\end{equation}
In \cite{BNPA08}, a distribution such that its L\'{e}vy measure is either zero
or has a density $\ell$ of the form (\ref{Levden}) is called a \textit{type
$A$ distribution} on $\mathbb{R}$. It is a one-dimensional symmetric
distribution. Let $Z$ be a standard normal random variable and $V$ a positive
infinitely divisible random variable independent of $Z$. The distribution of
the one-dimensional random variable $V^{1/2}Z$ is infinitely divisible and is
called of type $G$. It is also shown in \cite{BNPA08} that an infinitely
divisible distribution $\widetilde{\mu}$ on $\mathbb{R}$ is of type $G$ if and
only if there exists a type $A$ distribution $\mu$ on $\mathbb{R}$ with the
following stochastic integral mapping representation
\begin{equation}
\widetilde{\mu}=\mathcal{L}\left(  \int_{0}^{1/2}\left(  \log\frac{1}%
{t}\right)  ^{1/2}dX_{t}^{(\mu)}\right)  . \label{Grep}%
\end{equation}
Here and in what follows, $\mathcal{L}$ means \textquotedblleft the law of"
and $\{X_{t}^{(\mu)}\}$ means a L\'{e}vy process on $\mathbb{R}^{d}$ whose
distribution at time 1 is $\mu\in I(\mathbb{R}^{d})$. (In \eqref{Grep}, $d=1$.)

In this paper, we study more about type $A$ (not necessarily symmetric)
distributions on $\mathbb{R}^{d}$. The organization of this paper is the following.

Section 2 considers two arcsine transformations $\mathcal{A}_{1}$ and
$\mathcal{A}_{2}$ of L\'{e}vy measures on $\mathbb{R}^{d}$ based on
(\ref{arcs0}) and a reparametrization of this density of the arcsine law. It
is shown that the domains of the transformations $\mathcal{A}_{1}$ and
$\mathcal{A}_{2}$ are $\mathfrak{M}_{L}^{1}(\mathbb{R}^{d})$ and
$\mathfrak{M}_{L}^{2}(\mathbb{R}^{d}),$ respectively, but they are identical
modulo some $(p)$-transformation. We see that both transformations are
one-to-one and that they have the same range $\mathfrak{R}(\mathcal{A}_{k}).$
It is shown that this range contains as a proper subclass the Jurek class
$U(\mathbb{R}^{d})$ of distributions on $\mathbb{R}^{d}$ studied in
\cite{Ju85}, \cite{MS08}. $U(\mathbb{R}^{d})$ includes several known classes
of multivariate distributions characterized by the radial part of their
L\'{e}vy measures, such as the Goldie-Steutel-Bondesson class $B(\mathbb{R}%
^{d})$, the class of selfdecomposable distributions $L(\mathbb{R}^{d})$ and
the Thorin class $T(\mathbb{R}^{d})$, see \cite{BNMS06}. Recently, other
bigger classes than the Jurek class have been discussed in the study of
extension of selfdecomposability, see \cite{MMS10} and \cite{Sato10}.

Section 3 deals with the class $A(\mathbb{R}^{d})$ of type $A$ distributions
on $\mathbb{R}^{d}$ defined as those infinitely divisible distributions on
$\mathbb{R}^{d}$ which L\'{e}vy measure ${\nu}$ belongs to $\mathfrak{R}%
(\mathcal{A}_{k})$.\ Some probabilistic interpretations are considered and the
relation to the class $G(\mathbb{R}^{d})$ of generalized type $G$
distributions on $\mathbb{R}^{d}$ introduced in \cite{MS08} is studied. It is
shown that $A(\mathbb{R}^{d})=\Phi_{\cos}(I(\mathbb{R}^{d})),$ where
$\Phi_{\cos}$ is the stochastic integral mapping
\[
\Phi_{\cos}(\mu)=\mathcal{L}\left(  \int_{0}^{1}\cos(2^{-1}\pi t)\mathrm{d}%
X_{t}^{(\mu)}\right)  ,\quad\mu\in I(\mathbb{R}^{d}).
\]
It is also shown that the class of \ L\'{e}vy measure in $G(\mathbb{R}^{d})$
is the image of the class of \ L\'{e}vy measures in $B(\mathbb{R}^{d}%
)\cap\mathfrak{M}_{L}^{1}(\mathbb{R}^{d})$ under $\mathcal{A}_{1}$. \ In order
to prove this, and as a result of independent interest, a new arcsine
representation of completely monotone functions is first obtained. In
addition, the class $G(\mathbb{R}^{d})$ is described as the image of
$A(\mathbb{R}^{d})$ under the stochastic integral mapping (\ref{Grep}),
$d\geq1,$ including the multivariate and non-symmetric cases. For doing this,
we first have to prove that $\mathcal{A}_{2}$ is an Upsilon transformation in
the sense of \cite{BNRT08}. However, we see that, remarkably, $\mathcal{A}%
_{1}$ is not an Upsilon transformation and it is not commuting with a specific
Upsilon transformation, which is different form other cases considered so far.
Finally, Section 4 contains examples of $\mathcal{A}_{1}$ and $\mathcal{A}%
_{2}$ transformations of L\'{e}vy measures where the modified Bessel function
$K_{0}$ plays an important role.

\vskip 10mm

%%%%%%%%%%%%%%%%%%%%%%% Section 2 %%%%%%%%%%%%%%%%%%%%%%%%%%%%%%%%%

\section{Two arcsine transformations $\mathcal{A}_{1}$ and $\mathcal{A}_{2}$
on $\mathbb{R }^{d}$}

\vskip 3mm
%%%%%%%%%%%%%%%%%%%%%%%%%%%%%%%%%%%%%%%%%%%%%%%%%%%%%%%%%%%%%%%%%%%

\subsection{Definitions and domains}

%%%%%%%%%%%%%%%%%%%%%%%%%%%%%%%%%%%%%%%%%%%%%%%%%%%%%%%%%%%%%%%%%%%

Besides the arcsine density (\ref{arcs0}), we consider two one-sided arcsine
densities with different parameters $s>0$ and $s^{2}$ as follows:
\[
a_{1}(r;s)=
\begin{cases}
2\pi^{-1}(s-r^{2})^{-1/2}, & \quad0<r<s^{1/2},\\
0, & \quad\text{otherwise},
\end{cases}
\label{arcs1}%
\]
and
\[
a_{2}(r;s)=
\begin{cases}
2\pi^{-1}(s^{2}-r^{2})^{-1/2}, & \quad0<r<s,\\
0, & \quad\text{otherwise}.
\end{cases}
\label{arcs2}%
\]
Then we consider two arcsine transformations $\mathcal{A}_{1}$ and
$\mathcal{A}_{2}$ of measures on $\mathbb{R }^{d}$ based on these two
one-sided arcsine densities.

\begin{defn}
\label{d1} Let $\nu$ be a measure on $\mathbb{R}^{d}$ satisfying
$\nu(\{0\})=0$. Then, for $k=1,2$, define the \emph{arcsine transformation}
$\mathcal{A}_{k}$ of $\nu$ by
\begin{equation}
\mathcal{A}_{k}(\nu)(B)=\int_{\mathbb{R}^{d}\setminus\{0\}}\nu(\mathrm{d}x)
\int_{0}^{\infty}a_{k}(r;|x|)1_{B}\left(  r\frac{x}{|x|}\right)
\mathrm{d}r,\quad B\in\mathcal{B}(\mathbb{R}^{d}). \label{DefArcTrn1}%
\end{equation}

\end{defn}

The domain $\mathfrak{D}(\mathcal{A}_{k})$ is the class of measures $\nu$ on
$\mathbb{R}^{d}$ such that $\nu(\{0\})=0$ and the right-hand side of
\eqref{DefArcTrn1} is a L\'evy measure in $\mathfrak{M}_{L}(\mathbb{R}^{d})$.
The range is
\[
\mathfrak{R}(\mathcal{A}_{k})=\{\mathcal{A}_{k}(\nu)\colon\nu\in
\mathfrak{D}(\mathcal{A}_{k})\}.
\]

\vskip 3mm We have the following result about domains of $\mathcal{A}_{1}$ and
$\mathcal{A}_{2}$.

\begin{thm}
\label{DomArcTrn} The domains of $\mathcal{A}_{k}$ are as follows:
\[
\mathfrak{D}(\mathcal{A}_{k})=\mathfrak{M}_{L}^{k}(\mathbb{R}^{d}), \quad
k=1,2.
\]

\end{thm}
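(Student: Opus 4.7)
The plan is to test the measure $\mathcal{A}_k(\nu)$ against the test function $f(y)=1\wedge|y|^2$ that appears in the definition of a L\'evy measure, and to reduce the resulting condition to an integrability requirement on $\nu$ alone. By Fubini's theorem applied to \eqref{DefArcTrn1} and the observation that $|r\,x/|x||=r$ for $r>0$,
\[
\int_{\mathbb{R}^d}(1\wedge|y|^2)\,\mathcal{A}_k(\nu)(dy)
=\int_{\mathbb{R}^d\setminus\{0\}}J_k(|x|)\,\nu(dx),
\qquad
J_k(s):=\int_0^\infty a_k(r;s)(1\wedge r^2)\,dr.
\]
The question thus reduces to identifying the growth of $J_k(s)$ in $s$.

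To evaluate $J_k$, I would use a trigonometric substitution that trivializes the arcsine weight. For $k=1$, set $r=s^{1/2}\sin\theta$, so that $(s-r^2)^{-1/2}dr=d\theta$, giving
\[
J_1(s)=\frac{2}{\pi}\int_0^{\pi/2}(1\wedge s\sin^2\theta)\,d\theta;
\]
for $k=2$, set $r=s\sin\theta$, giving
\[
J_2(s)=\frac{2}{\pi}\int_0^{\pi/2}(1\wedge s^2\sin^2\theta)\,d\theta.
\]
The substantive step is then to show that $I(c):=\int_0^{\pi/2}(1\wedge c\sin^2\theta)\,d\theta$ is bounded above and below by positive constant multiples of $1\wedge c$. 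When $0<c\le1$, the minimum is attained by $c\sin^2\theta$, so $I(c)=(\pi/4)c$; when $c\ge1$, splitting the range at $\theta_0=\arcsin(c^{-1/2})$ and bounding each piece yields $\pi/4\le I(c)\le \pi/2$. Consequently $J_k(s)$ is comparable to $1\wedge s^k$.

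Combining these, $\int(1\wedge|y|^2)\mathcal{A}_k(\nu)(dy)<\infty$ holds if and only if $\int(1\wedge|x|^k)\nu(dx)<\infty$, which is precisely the condition defining $\mathfrak{M}_L^k(\mathbb{R}^d)$. The remaining L\'evy measure requirements for $\mathcal{A}_k(\nu)$ come for free: $\mathcal{A}_k(\nu)(\{0\})=0$ since the inner integration uses $r>0$, and $\sigma$-finiteness follows from the global integrability above. No serious obstacle is anticipated; the only piece of content is the elementary two-sided bound on $I(c)$, and the substitutions together with Fubini are routine.
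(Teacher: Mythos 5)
Your proposal is correct and follows essentially the same route as the paper: apply Fubini, rescale the radial variable to pull out the parameter, and establish a two-sided comparison of the resulting integral with $1\wedge s^k$ (the paper uses the substitution $r=|x|^{k/2}u$ and bounds $1\wedge(|x|^k u^2)$ between $(1\wedge|x|^k)u^2$ and $1\wedge|x|^k$, which is your bound on $I(c)$ in the variable $u=\sin\theta$). The only difference is cosmetic, and your handling of the remaining Lévy-measure requirements is fine.
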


\noindent\textit{Proof.} We write $c=2\pi^{-1}$. First, let us show that
$\mathfrak{D}(\mathcal{A}_{k}) \subset\mathfrak{M}_{L}^{k}(\mathbb{R}^{d})$.
Suppose that $\nu\in\mathfrak{D}(\mathcal{A}_{k})$. Write $\widetilde{\nu}%
_{k}=\mathcal{A}_{k}(\nu)$. Then
\begin{equation}
\label{DefArcTrn3}\widetilde{\nu}_{k}(B)=\int_{\mathbb{R}^{d}\setminus\{0\}}
\nu(\mathrm{d}x) \int_{0}^{|x|^{k/2}}c(|x|^{k} -r^{2})^{-1/2} 1_{B}\left(
r\frac{x}{|x|}\right)  \mathrm{d} r.
\end{equation}
Hence, for all nonnegative measurable functions $f$ on $\mathbb{R}%
^{d}\setminus\{0\}$,
\[
\int_{\mathbb{R}^{d}\setminus\{0\}} f(x)\widetilde{\nu}_{k}(\mathrm{d}x)
=\int_{\mathbb{R}^{d}\setminus\{0\}} \nu(\mathrm{d}x) \int_{0}^{|x|^{k/2}}
c(|x|^{k} -r^{2})^{-1/2} f\left(  r\frac{x}{|x|}\right)  \mathrm{d} r.
\]
In particular,
\begin{equation}
\label{DefArcTrn5}\int_{\mathbb{R}^{d}}(1\land|x|^{2})\widetilde{\nu}%
_{k}(\mathrm{d}x) =c\int_{\mathbb{R}^{d}} \nu(\mathrm{d}x) \int_{0}%
^{|x|^{k/2}} (|x|^{k} -r^{2})^{-1/2}(1\land r^{2})\mathrm{d} r.
\end{equation}
Since $\int(1\land|x|^{2})\widetilde{\nu}_{k}(\mathrm{d}x)<\infty$, we see
that
\begin{align*}
\infty &  > c\int_{\mathbb{R}^{d}}\nu(\mathrm{d}x) \int_{0}^{1} (1-u^{2}%
)^{-1/2}(1\land(|x|^{k} u^{2})) \mathrm{d} u\\
&  \geq c\int_{\mathbb{R}^{d}} (1\land|x|^{k})\nu(\mathrm{d}x) \int_{0}%
^{1}(1-u^{2})^{-1/2} u^{2}\mathrm{d} u.
\end{align*}
Hence $\nu\in\mathfrak{M}_{L}^{k}(\mathbb{R}^{d})$.

Next let us show that $\mathfrak{M}_{L}^{k}(\mathbb{R}^{d})\subset
\mathfrak{D}(\mathcal{A}_{k})$. Suppose that $\nu\in\mathfrak{M}_{L}%
^{k}(\mathbb{R}^{d})$. Let $\widetilde{\nu}_{k}(B)$ denote the right-hand side
of \eqref{DefArcTrn1}. Then $\widetilde{\nu}_{k}$ is a measure on
$\mathbb{R}^{d}$ with $\widetilde{\nu}_{k}(\{0\})=0$ and \eqref{DefArcTrn3}
and \eqref{DefArcTrn5} hold. Hence
\begin{align*}
&  \int_{\mathbb{R}^{d}}(1\land|x|^{2})\widetilde{\nu}_{k}(\mathrm{d}x)
=c\int_{\mathbb{R}^{d}} \nu(\mathrm{d}x) \int_{0}^{1} (1-u^{2})^{-1/2}
(1\land(|x|^{k} u^{2}))\mathrm{d} u\\
&  \qquad\leq c\int_{\mathbb{R}^{d}} (1\land|x|^{k}) \nu(\mathrm{d}x) \int%
_{0}^{1} (1-u^{2})^{-1/2}\mathrm{d} u<\infty.
\end{align*}
This shows that $\nu\in\mathfrak{D}(\mathcal{A}_{k})$. \qed

\medskip In order to study the relation between $\mathcal{A}_{1}$ and
$\mathcal{A}_{2}$, we use the following transformation of measures.

\begin{defn}
\label{p-transf} Let $p>0$. For any measure $\rho$ on $(0,\infty)$, define a
measure ${\rho}^{(p)}$ on $(0,\infty)$ by
\[
\label{p-transf-1}{\rho}^{(p)}(E)=\int_{(0,\infty)}1_{E}(s^{p})\rho
(\mathrm{d}s),\qquad E\in\mathcal{B}((0,\infty)).
\]
More generally, for any measure $\nu$ on $\mathbb{R}^{d}$ with $\nu(\{0\})=0$,
define a measure $\nu^{(p)}$ on $\mathbb{R}^{d}$ by
\[
\label{p-transf-2}{\nu}^{(p)}(B)=\int_{\mathbb{R}^{d}\setminus\{0\}}%
1_{B}\left(  |x|^{p}\frac{x}{|x|}\right)  \nu(\mathrm{d}x),\qquad
B\in\mathcal{B}(\mathbb{R}^{d}).
\]
We call the mapping from $\nu$ to $\nu^{(p)}$ $\left(  \emph{p}\right)
$\emph{-transformation}.
\end{defn}

The following result is the polar decomposition of a L\'{e}vy measure in
$\mathfrak{M}_{L}(\mathbb{R }^{d})$, (see \cite{BNMS06}, \cite{Ro90}). Here we
include the case $\nu=0$. It is a basic tool to study multivariate infinitely
divisible distributions.

\begin{prop}
\label{PolDec} Let $\nu\in\mathfrak{M}_{L}(\mathbb{R}^{d})$. Then there exists
a measure $\lambda$ on the unit sphere $\mathbb{S}=\{\xi\in\mathbb{R}^{d}
\colon\left\vert \xi\right\vert =1\}$ with $0\leq\lambda(\mathbb{S})\leq
\infty$ and a family $\left\{  \nu_{\xi}\colon\xi\in\mathbb{S}\right\}  $ of
measures on $(0,\infty)$ such that $\nu_{\xi}(E)$ is measurable in $\xi$ for
each $E\in\mathcal{B}((0,\infty)),$ $0<\nu_{\xi}((0,\infty))\leq\infty$ for
each $\xi\in\mathbb{S}$, and
\[
\nu(B)=\int_{\mathbb{S}}\lambda(\mathrm{d}\xi)\int_{0}^{\infty} 1_{B}(r\xi
)\nu_{\xi}(\mathrm{d}r),\quad B\in\mathcal{B}(\mathbb{R}^{d}).\label{PolDecLM}%
\]
$\nu_{\xi}$ is called the radial component of $\nu$. Here $\lambda$ and
$\{\nu_{\xi}\}$ are uniquely determined by $\nu$ in the following sense: if
$(\lambda,\nu_{\xi})$ and $(\lambda^{\prime},\nu^{\prime}_{\xi})$ both have
the properties above, then there is a measurable function $c(\xi)$ on
$\mathbb{S}$ such that
\begin{gather*}
0<c(\xi)<\infty,\\
\lambda^{\prime}(\mathrm{d}\xi)=c(\xi)\lambda(\mathrm{d}\xi),\\
c(\xi)\nu^{\prime}_{\xi}(\mathrm{d} r)=\nu_{\xi}(\mathrm{d} r)\quad\text{for
$\lambda$-a.\,e.\ $\xi$}.
\end{gather*}

\end{prop}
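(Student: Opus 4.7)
The plan is to obtain the polar decomposition as a disintegration of $\nu$ with respect to the polar map $\Phi\colon\mathbb{R}^d\setminus\{0\}\to\mathbb{S}\times(0,\infty)$ given by $\Phi(x)=(x/|x|,|x|)$. First I would reduce to a finite-measure problem by weighting: since $\int(1\wedge|x|^2)\nu(\mathrm{d}x)<\infty$, the measure $\mu(\mathrm{d}x):=(1\wedge|x|^2)\nu(\mathrm{d}x)$ is finite on $\mathbb{R}^d\setminus\{0\}$, so its pushforward $\widetilde\mu:=\mu\circ\Phi^{-1}$ is a finite Borel measure on the Polish space $\mathbb{S}\times(0,\infty)$. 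Let $\lambda$ be the first marginal of $\widetilde\mu$ on $\mathbb{S}$. By the standard disintegration theorem on Polish spaces, there exists a measurable kernel $\xi\mapsto P_\xi$ of probability measures on $(0,\infty)$ such that $\widetilde\mu(\mathrm{d}\xi,\mathrm{d}r)=\lambda(\mathrm{d}\xi)P_\xi(\mathrm{d}r)$.

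Next I would unweight to construct the radial components. For $\xi$ in a $\lambda$-conull set $\mathbb{S}_0$, define $\nu_\xi(\mathrm{d}r):=(1\wedge r^2)^{-1}P_\xi(\mathrm{d}r)$; since $P_\xi$ is a probability measure on $(0,\infty)$, this yields $0<\nu_\xi((0,\infty))\leq\infty$ automatically. For $\xi\in\mathbb{S}\setminus\mathbb{S}_0$ I simply assign $\nu_\xi$ to be any fixed probability measure on $(0,\infty)$, so that the positivity condition holds for \emph{every} $\xi$ as required by the statement. A change of variable $x=r\xi$ then gives
\[
\nu(B)=\int_{\mathbb{S}}\lambda(\mathrm{d}\xi)\int_0^\infty 1_B(r\xi)\,\nu_\xi(\mathrm{d}r)
\]
first for Borel $B$ avoiding the origin, and hence for all Borel $B\subset\mathbb{R}^d$ since $\nu(\{0\})=0$.

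For uniqueness, suppose $(\lambda,\nu_\xi)$ and $(\lambda',\nu'_\xi)$ both represent $\nu$. Evaluating on sets of the form $\{r\xi:\xi\in A,\,r\in E\}$ yields two disintegrations of the $\sigma$-finite measure $\nu\circ\Phi^{-1}$ on $\mathbb{S}\times(0,\infty)$, where $\sigma$-finiteness follows because $\nu(\{|x|\ge a\})<\infty$ for every $a>0$. If $\lambda(A)=0$, then $0=\int_A\nu'_\xi((0,\infty))\,\lambda'(\mathrm{d}\xi)$, and since $\nu'_\xi((0,\infty))>0$ for every $\xi$, this forces $\lambda'(A)=0$; by symmetry $\lambda$ and $\lambda'$ are mutually absolutely continuous. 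Setting $c(\xi):=\mathrm{d}\lambda'/\mathrm{d}\lambda(\xi)$, extended arbitrarily to a value in $(0,\infty)$ on a $\lambda$-null set, the standard uniqueness of disintegrations yields $c(\xi)\nu'_\xi=\nu_\xi$ for $\lambda$-a.e.\ $\xi$, which is the stated relation.

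The main obstacle is the subtle point that the conclusion is required to hold for \emph{every} $\xi\in\mathbb{S}$ rather than almost everywhere; this is not deep but must be handled by the ad hoc reassignment of $\nu_\xi$ and $c(\xi)$ on the exceptional null sets, a modification invisible to every integral that appears. Beyond this, the argument reduces entirely to the standard disintegration machinery for $\sigma$-finite Borel measures on Polish spaces, which supplies both existence of the kernel and the essential uniqueness statement.
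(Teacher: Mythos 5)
The paper offers no proof of this proposition---it is quoted from \cite{Ro90} and \cite{BNMS06}---so there is no in-paper argument to compare against; your disintegration proof is essentially the standard argument used in those references. It is correct: weighting by $(1\wedge|x|^2)$ produces a finite Borel measure on the Polish space $\mathbb{S}\times(0,\infty)$ to which the disintegration theorem applies, the unweighting $\nu_\xi(\mathrm{d}r)=(1\wedge r^2)^{-1}P_\xi(\mathrm{d}r)$ automatically gives $\nu_\xi((0,\infty))\geq 1>0$, the ad hoc reassignment on the exceptional $\lambda$-null set secures the everywhere-positivity required by the statement, and in the uniqueness step the mutual absolute continuity of $\lambda$ and $\lambda'$ follows exactly as you say, with the a.e.\ identification $c(\xi)\nu'_\xi=\nu_\xi$ obtained by applying the same $(1\wedge r^2)$ weight to reduce the fibre measures to finite ones before invoking uniqueness of disintegrations.
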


If $\nu\in\mathfrak{M}_{L}(\mathbb{R}^{d})$ has a polar decomposition
$(\lambda,\nu_{\xi})$, then
\[
\nu^{(p)}(B)=\int_{\mathbb{S}}\lambda(\mathrm{d}\xi)\int_{0}^{\infty}
1_{B}(r\xi)\,{\nu_{\xi}}^{(p)}(\mathrm{d}r),\qquad B\in\mathcal{B}
(\mathbb{R}^{d}).
\]
If $\widetilde{\nu}=\nu^{(p)}$, then $\nu={\widetilde{\nu}}^{(1/p)}$. For any
nonnegative measurable function $f(x)$ on $\mathbb{R}^{d}$,
\begin{equation}
\int_{\mathbb{R}^{d}}f(x)\nu^{(p)}(\mathrm{d}x)=\int_{\mathbb{R}^{d}%
\setminus\{0\}}f(|x|^{p-1}x)\nu(\mathrm{d}x). \label{p-transf-4}%
\end{equation}

\vskip3mm The two arcsine transformations are identical modulo some $\left(
p\right)  $-transformations.

\begin{prop}
\label{p-trRep} $\nu\in\mathfrak{M}_{L}^{2}(\mathbb{R}^{d})$ if and only if
$\nu^{(2)}\in\mathfrak{M}_{L}^{1}(\mathbb{R}^{d})$, and in this case
\[
\label{p-trRep-1}\mathcal{A}_{2}(\nu)=\mathcal{A}_{1}(\nu^{(2)}).
\]
Also $\nu\in\mathfrak{M}_{L}^{1}(\mathbb{R}^{d})$ if and only if $\nu
^{(1/2)}\in\mathfrak{M}_{L}^{2}(\mathbb{R}^{d})$, and in this case
\[
\label{p-trRep-2}\mathcal{A}_{1}(\nu)=\mathcal{A}_{2}(\nu^{(1/2)}).
\]

\end{prop}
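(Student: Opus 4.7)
The proof is essentially a change-of-variables calculation, held together by the elementary identity $a_2(r;s)=a_1(r;s^2)$, which is immediate from the definitions. So my plan has two tasks: verify the membership equivalence, and then verify the identity of the two transformations.

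For the membership equivalence I would use formula \eqref{p-transf-4}: taking $p=2$ and $f(x)=1\wedge|x|$ gives $\int(1\wedge|y|)\nu^{(2)}(\mathrm{d}y)=\int(1\wedge ||x|x|)\nu(\mathrm{d}x)=\int(1\wedge|x|^2)\nu(\mathrm{d}x)$, since $||x|^{p-1}x|=|x|^p=|x|^2$. This shows $\nu\in\mathfrak{M}_L^2(\mathbb{R}^d)\Leftrightarrow\nu^{(2)}\in\mathfrak{M}_L^1(\mathbb{R}^d)$. The same calculation with $p=1/2$ and $f(x)=1\wedge|x|^2$ gives the second equivalence.

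For the identity $\mathcal{A}_2(\nu)=\mathcal{A}_1(\nu^{(2)})$, I would first observe from the definitions of $a_1$ and $a_2$ that $a_2(r;s)=a_1(r;s^2)$ for all $r,s>0$. Hence, directly from Definition~\ref{d1},
\[
\mathcal{A}_2(\nu)(B)=\int_{\mathbb{R}^d\setminus\{0\}}\nu(\mathrm{d}x)\int_0^\infty a_1(r;|x|^2)\,1_B\!\left(r\tfrac{x}{|x|}\right)\mathrm{d}r.
\]
On the other side, applying \eqref{p-transf-4} with $p=2$ and $f(y)=\int_0^\infty a_1(r;|y|)1_B(ry/|y|)\mathrm{d}r$, and noting $||x|x|=|x|^2$ and $(|x|x)/||x|x|=x/|x|$, yields
\[
\mathcal{A}_1(\nu^{(2)})(B)=\int_{\mathbb{R}^d\setminus\{0\}}\nu(\mathrm{d}x)\int_0^\infty a_1(r;|x|^2)\,1_B\!\left(r\tfrac{x}{|x|}\right)\mathrm{d}r,
\]
which matches the expression for $\mathcal{A}_2(\nu)(B)$. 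A Fubini/monotone-convergence justification of the use of \eqref{p-transf-4} on the nonnegative integrand defined by the inner $r$-integral will be needed, but this is routine.

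The second identity $\mathcal{A}_1(\nu)=\mathcal{A}_2(\nu^{(1/2)})$ then follows without further work: for $\nu\in\mathfrak{M}_L^1(\mathbb{R}^d)$, we have $\nu^{(1/2)}\in\mathfrak{M}_L^2(\mathbb{R}^d)$ by the already-proven equivalence, and the general fact $(\nu^{(p)})^{(q)}=\nu^{(pq)}$ gives $(\nu^{(1/2)})^{(2)}=\nu$, so by the first identity $\mathcal{A}_2(\nu^{(1/2)})=\mathcal{A}_1((\nu^{(1/2)})^{(2)})=\mathcal{A}_1(\nu)$. There is no real obstacle here; the only point requiring a bit of care is the verification of $((\nu^{(1/2)})^{(2)}=\nu$, which again is a direct application of \eqref{p-transf-4}, and the careful manipulation $|x|^{p-1}x$ under $|\cdot|$ to produce $|x|^p$.
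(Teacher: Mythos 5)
Your proof is correct, and it reaches the identity by a slightly different route than the paper. The paper proves $\mathcal{A}_{2}(\nu)=\mathcal{A}_{1}(\nu^{(2)})$ by first passing to a polar decomposition $(\lambda,\nu_{\xi})$ of $\nu$ (Proposition \ref{PolDec}) and performing the change of variables radially, whereas you work directly with the defining formula \eqref{DefArcTrn1}, using the pointwise identity $a_{2}(r;s)=a_{1}(r;s^{2})$ together with \eqref{p-transf-4} applied to the nonnegative function $f(y)=\int_{0}^{\infty}a_{1}(r;|y|)1_{B}(ry/|y|)\,\mathrm{d}r$; this avoids invoking the polar-decomposition machinery altogether and only needs Tonelli to justify measurability of $f$. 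The underlying computation is the same substitution, but your scaffolding is lighter. You also handle the second half differently: the paper dismisses it as ``similar,'' while you deduce it formally from the first half via $(\nu^{(1/2)})^{(2)}=\nu$ (a special case of $(\nu^{(p)})^{(q)}=\nu^{(pq)}$, which is easily checked from \eqref{p-transf-4} since $\bigl||x|^{p-1}x\bigr|^{q-1}|x|^{p-1}x=|x|^{pq-1}x$). That deduction is clean and removes the need to repeat any computation. The membership equivalences are handled exactly as in the paper, via \eqref{p-transf-4} with $f(x)=1\wedge|x|$ and $f(x)=1\wedge|x|^{2}$; note only that membership of $\nu^{(2)}$ in $\mathfrak{M}_{L}^{1}(\mathbb{R}^{d})$ reduces to the single integral condition because $1\wedge|x|^{2}\leq 1\wedge|x|$. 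No gaps.
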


\noindent\textit{Proof.} Equivalence of $\nu\in\mathfrak{M}_{L}^{2}%
(\mathbb{R}^{d})$ and $\nu^{(2)} \in\mathfrak{M}_{L}^{1}(\mathbb{R}^{d})$
follows from
\[
\int_{\mathbb{R }^{d}}(1\land|x|)\,\nu^{(2)}(\mathrm{d}x)= \int_{\mathbb{R}%
^{d}}(1\land|x|^{2})\,\nu(\mathrm{d}x)
\]
obtained from \eqref{p-transf-4}. We have
\begin{align*}
\mathcal{A}_{1}(\nu^{(2)})(B)  &  =\int_{\mathbb{S}}\lambda(\mathrm{d}\xi)
\int_{0}^{\infty} 1_{B}(r\xi)\mathrm{d}r \int_{r^{2}}^{\infty} 2\pi^{-1}
(s-r^{2})^{-1/2}\,{\nu_{\xi}}^{(2)} (\mathrm{d}s)\\
&  = \int_{\mathbb{S}}\lambda(\mathrm{d}\xi) \int_{0}^{\infty} 1_{B}
(r\xi)\mathrm{d}r \int_{r}^{\infty} 2\pi^{-1} (s^{2}-r^{2})^{-1/2}\,\nu_{\xi}
(\mathrm{d}s)\\
&  =\mathcal{A}_{2}(\nu)(B),
\end{align*}
proving the first half. The proof of the second half is similar. \qed

\vskip 5mm
%%%%%%%%%%%%%%%%%%%%%%%%%%%%%%%%%%%%%%%%%%%%%%%%%%%%%%%%%%%%%%%%%%

\subsection{One-to-one property}

%%%%%%%%%%%%%%%%%%%%%%%%%%%%%%%%%%%%%%%%%%%%%%%%%%%%%%%%%%%%%%%%%%%

We next show that the arcsine transformations $\mathcal{A}_{1}$ and
$\mathcal{A}_{2}$ are one-to-one. In contrast to usual proofs for the
one-to-one property by the use of Laplace transform, our proof here has a
different flavor.

Let us first prove some lemmas. A measure $\sigma$ on $(0,\infty)$ is said to
be locally finite on $(0,\infty)$ if $\sigma((b,c))<\infty$ whenever
$0<b<c<\infty$. For a measure $\rho$ on $(0,\infty)$, define
\[
\mathfrak{A}(\rho)(\mathrm{d}u)=\left(  \int_{(u,\infty)} \pi^{-1/2}%
(s-u)^{-1/2}\rho(\mathrm{d}s)\right)  \mathrm{d}u ,\label{d1.1}%
\]
if the integral in the right-hand side is the density of a locally finite
measure on $(0,\infty)$. This is fractional integral of order $1/2$.

\begin{lem}
\label{lem1} If
\begin{equation}
\label{lem1.1}\int_{(b,\infty)} s^{-1/2} \rho(\mathrm{d}s)<\infty
\quad\text{for all\/ $b>0 $},
\end{equation}
then $\mathfrak{A }(\rho)$ is definable.
\end{lem}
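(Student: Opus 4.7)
Saying that $\mathfrak{A}(\rho)$ is definable amounts to verifying two things about the candidate density
\[
g(u):=\int_{(u,\infty)}(s-u)^{-1/2}\rho(\mathrm{d}s),\qquad u>0,
\]
namely that $g$ is finite for Lebesgue-almost every $u>0$ and that $\int_{b}^{c}g(u)\,\mathrm{d}u<\infty$ for every $0<b<c<\infty$. Both will fall out of a single Tonelli computation, so my strategy is to fix arbitrary $0<b<c<\infty$, exchange the order of integration in $\int_{b}^{c}g(u)\,\mathrm{d}u$, and then control the resulting $s$-integral using only the hypothesis \eqref{lem1.1}.

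After the swap (legal since the integrand is nonnegative) the double integral becomes $\int_{(b,\infty)}F(s)\rho(\mathrm{d}s)$, where
\[
F(s)=\int_{b}^{\min(c,s)}(s-u)^{-1/2}\,\mathrm{d}u=\begin{cases}2\sqrt{s-b},&s\in(b,c],\\ 2\bigl(\sqrt{s-b}-\sqrt{s-c}\bigr),&s>c.\end{cases}
\]
It suffices to show this integral is finite, and I will do so by splitting $(b,\infty)$ into three ranges. On $(b,c]$, $F(s)\leq 2\sqrt{c-b}$, and the hypothesis gives $\rho((b,c])\leq\sqrt{c}\int_{(b,\infty)}s^{-1/2}\rho(\mathrm{d}s)<\infty$. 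On $(c,2c]$, the elementary bound $\sqrt{s-b}-\sqrt{s-c}\leq\sqrt{c-b}$ again yields $F\leq 2\sqrt{c-b}$, and $\rho((c,2c])<\infty$ for the same reason. On $(2c,\infty)$, I rewrite
\[
\sqrt{s-b}-\sqrt{s-c}=\frac{c-b}{\sqrt{s-b}+\sqrt{s-c}}
\]
and use $\sqrt{s-c}\geq\sqrt{s/2}$ on this range to obtain $F(s)\leq 2\sqrt{2}\,(c-b)\,s^{-1/2}$, so this tail piece is dominated by $2\sqrt{2}(c-b)\int_{(2c,\infty)}s^{-1/2}\rho(\mathrm{d}s)$, which is finite by \eqref{lem1.1}.

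Combining the three estimates gives $\int_{b}^{c}g(u)\,\mathrm{d}u<\infty$, which simultaneously forces $g<\infty$ Lebesgue-a.e.\ on $(b,c)$ and yields local finiteness of $\mathfrak{A}(\rho)$ on $(b,c)$; since $b,c$ were arbitrary, $\mathfrak{A}(\rho)$ is definable. The only mildly delicate step is the tail estimate on $(2c,\infty)$, where one has to extract the $s^{-1/2}$ decay from the difference of two square roots in order to match the precise form of \eqref{lem1.1}; everything else is routine splitting and Tonelli.
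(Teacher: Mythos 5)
Your proof is correct and follows essentially the same route as the paper: a Tonelli swap of $\int_b^c\mathrm{d}u$ with $\rho(\mathrm{d}s)$, reducing everything to the finiteness of $\int_{(b,\infty)}F(s)\,\rho(\mathrm{d}s)$ with $F(s)=2\bigl((s-b)^{1/2}-(s-c\wedge s)_+^{1/2}\bigr)$, and then the key observation that the difference of square roots decays like $(c-b)s^{-1/2}$ at infinity, which is exactly what hypothesis \eqref{lem1.1} controls. The only difference is cosmetic: the paper invokes the asymptotic $(s-b)^{1/2}-(s-c)^{1/2}\sim(c-b)s^{-1/2}$ as $s\to\infty$, whereas you make the same point via explicit inequalities on the three ranges $(b,c]$, $(c,2c]$, $(2c,\infty)$.
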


\begin{proof}
Let $0<b<c<\infty$. We have
\begin{align*}
&  \int_{b}^{c} \mathrm{d}u\int_{u}^{\infty}(s-u)^{-1/2}\rho(\mathrm{d}s)
=\int_{b}^{\infty}\rho(\mathrm{d}s)\int_{b}^{c\land s} (s-u)^{-1/2}%
\mathrm{d}u\\
&  \qquad=\int_{b}^{c} \rho(ds)\int_{b}^{s} (s-u)^{-1/2}\mathrm{d}u +\int%
_{c}^{\infty}\rho(\mathrm{d}s)\int_{b}^{c} (s-u)^{-1/2}\mathrm{d}u\\
&  \qquad=2\int_{b}^{c} (s-b)^{1/2}\rho(\mathrm{d}s)+ 2 \int_{c}^{\infty
}((s-b)^{1/2}-(s-c)^{1/2})\rho(\mathrm{d}s),
\end{align*}
which is finite, since $(s-b)^{1/2}-(s-c)^{1/2}\sim(c-b)s^{-1/2}$ as
$s\to\infty$.
\end{proof}

\begin{lem}
\label{lem1a} Suppose that $\mathfrak{A}(\rho)$ is definable. Then, for
$\alpha>-1$ and $b>0$,
\begin{equation}
\label{lem1a-1}\int_{(b,\infty)} u^{\alpha}\mathfrak{A}(\rho)(\mathrm{d}u)\leq
C_{1}\int_{(b,\infty)} s^{\alpha+1/2}\rho(\mathrm{d}s)
\end{equation}
and
\begin{equation}
\label{lem1a-2}\int_{(0,b]} u^{\alpha}\mathfrak{A}(\rho)(\mathrm{d}u)\leq
C_{2}\left(  \int_{(0,b]} s^{\alpha+1/2}\rho(\mathrm{d}s)+\int_{(b,\infty)}
s^{-1/2}\rho(\mathrm{d}s)\right)  ,
\end{equation}
where $C_{1}$ and $C_{2}$ are constants independent of $\rho$.
\end{lem}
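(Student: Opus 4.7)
The plan is to unfold the definition of $\mathfrak{A}(\rho)$ and apply Fubini's theorem, reducing both bounds to estimates on the inner integral $\int u^\al(s-u)^{-1/2}\,\mathrm{d}u$ weighted by $\rho(\mathrm{d}s)$.

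For the first inequality, I would write
\[
\int_{(b,\infty)} u^\al \mathfrak{A}(\rho)(\mathrm{d}u) = \pi^{-1/2}\int_{(b,\infty)} \rho(\mathrm{d}s)\int_b^s u^\al(s-u)^{-1/2}\,\mathrm{d}u,
\]
the outer support being effectively $s>b$ because otherwise the inner integral is empty. Substituting $u=sv$ recasts the inner integral as $s^{\al+1/2}\int_{b/s}^1 v^\al(1-v)^{-1/2}\,\mathrm{d}v$, which is at most $s^{\al+1/2}\,B(\al+1,1/2)$; the assumption $\al>-1$ ensures the Beta function is finite, so \eqref{lem1a-1} holds with $C_1=\pi^{-1/2}B(\al+1,1/2)$, independent of $\rho$ and of $b$.

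For the second inequality, I would Fubini the double integral over $\{(u,s):0<u\le b,\,s>u\}$ and split the $\rho$-integration at $s=b$. The piece with $s\in(u,b]$ collapses, again by the substitution $u=sv$, to $\pi^{-1/2} B(\al+1,1/2)\int_{(0,b]} s^{\al+1/2}\,\rho(\mathrm{d}s)$, accounting for the first term on the right-hand side. The remaining task is to bound
\[
\int_0^b u^\al (s-u)^{-1/2}\,\mathrm{d}u \le C(\al,b)\,s^{-1/2}
\]
uniformly for $s>b$. This is the main obstacle: near $s=b$ the pointwise estimate $(s-u)^{-1/2}\le(s-b)^{-1/2}$ is too weak, because it blows up as $s\downarrow b$ whereas the target $s^{-1/2}$ stays bounded away from zero; so a single pointwise inequality cannot work on the whole range.

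I would overcome this via a dichotomy on $s$. When $s\ge 2b$ we have $s-u\ge s/2$ throughout $u\in[0,b]$, yielding directly $\int_0^b u^\al(s-u)^{-1/2}\,\mathrm{d}u \le \sqrt{2}\,b^{\al+1}/(\al+1)\cdot s^{-1/2}$. When $b<s<2b$ the integral is dominated by the complete Beta integral $B(\al+1,1/2)\,s^{\al+1/2}$; writing $s^{\al+1/2}=s^{\al+1}\cdot s^{-1/2}$ and using $s^{\al+1}\le(2b)^{\al+1}$ (valid since $\al>-1$) turns this into a constant multiple of $s^{-1/2}$. Combining the two ranges produces a constant $C_2=C_2(\al,b)$ independent of $\rho$, completing \eqref{lem1a-2}.
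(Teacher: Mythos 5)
Your proof is correct and follows the paper's argument almost step for step: unfold $\mathfrak{A}(\rho)$, apply Fubini, substitute $u=sv$ to obtain the Beta-function bound $s^{\alpha+1/2}B(\alpha+1,1/2)$ for the inner integral, and split the second estimate at $s=b$. The only point of divergence is the bound $\int_0^b u^{\alpha}(s-u)^{-1/2}\,\mathrm{d}u\leq C\,s^{-1/2}$ for $s>b$, which you obtain by the dichotomy $s\geq 2b$ versus $b<s<2b$; that is valid and gives a constant of the same quality. The paper, however, gets it in one line from the single pointwise inequality $(1-u/s)^{-1/2}\leq(1-u/b)^{-1/2}$ for $0<u<b<s$, which after factoring out $s^{-1/2}$ yields $s^{-1/2}\int_0^b u^{\alpha}(1-u/b)^{-1/2}\,\mathrm{d}u=s^{-1/2}b^{\alpha+1}B(\alpha+1,1/2)$ uniformly in $s>b$. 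So your remark that ``a single pointwise inequality cannot work on the whole range'' is not quite accurate: only the naive choice $(s-u)^{-1/2}\leq(s-b)^{-1/2}$ fails. A minor point in your favor: for \eqref{lem1a-1} you record the uniform bound $\int_{b/s}^1 v^{\alpha}(1-v)^{-1/2}\,\mathrm{d}v\leq B(\alpha+1,1/2)$ explicitly, whereas the paper only states the asymptotic relation as $s\to\infty$, which is what one actually needs to make the inequality hold for all $s>b$.
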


\begin{proof}
Let $c=\pi^{-1/2}$. We have
\begin{align*}
&  \int_{(b,\infty)}u^{\alpha}\mathfrak{A}(\rho)(\mathrm{d}u) =c\int%
_{b}^{\infty}u^{\alpha}\mathrm{d}u\int_{(u,\infty)}(s-u)^{-1/2}\rho
(\mathrm{d}s)\\
&  \qquad=c\int_{(b,\infty)}\rho(\mathrm{d}s)\int_{b}^{s}u^{\alpha
}(s-u)^{-1/2}\mathrm{d}u
\end{align*}
and
\begin{align*}
&  \int_{b}^{s}u^{\alpha}(s-u)^{-1/2}\mathrm{d}u=s^{-1/2}\int_{b}^{s}
u^{\alpha}(1-s^{-1}u)^{-1/2}\mathrm{d}u\\
&  \qquad=s^{\alpha+1/2}\int_{b/s}^{1}v^{\alpha}(1-v)^{-1/2}\mathrm{d}v \sim
s^{\alpha+1/2}B(\alpha+1,1/2),\quad s\rightarrow\infty.
\end{align*}
Hence \eqref{lem1a-1} holds. We have
\begin{align*}
&  \int_{(0,b]}u^{\alpha}\mathfrak{A}(\rho)(\mathrm{d}u) =c\int_{0}%
^{b}u^{\alpha}\mathrm{d}u\int_{(u,\infty)}(s-u)^{-1/2}\rho(\mathrm{d}s)\\
&  \quad=c\int_{(0,\infty)}\rho(\mathrm{d}s) \int_{0}^{s\wedge b}u^{\alpha
}(s-u)^{-1/2}\mathrm{d}u\\
&  \quad=c\int_{(0,b]}\rho(\mathrm{d}s)\int_{0}^{s}u^{\alpha}(s-u)^{-1/2}
\mathrm{d}u+c\int_{(b,\infty)}\rho(\mathrm{d}s)\int_{0}^{b}u^{\alpha
}(s-u)^{-1/2}\mathrm{d}u.
\end{align*}
Notice that
\[
\int_{0}^{s}u^{\alpha}(s-u)^{-1/2}\mathrm{d}u=s^{\alpha+1/2}B(\alpha+1,1/2)
\]
and
\begin{align*}
&  \int_{0}^{b}u^{\alpha}(s-u)^{-1/2}\mathrm{d}u=s^{-1/2}\int_{0}^{b}
u^{\alpha}(1-u/s)^{-1/2}\mathrm{d}u\\
&  \qquad\leq s^{-1/2}\int_{0}^{b}u^{\alpha}(1-u/b)^{-1/2}\mathrm{d}
u=s^{-1/2}b^{\alpha+1}B(\alpha+1,1/2),\quad s>b.
\end{align*}
Thus \eqref{lem1a-2} holds.
\end{proof}

\begin{lem}
\label{lem2} Suppose that
\begin{equation}
\rho((b,\infty))<\infty\quad\text{for all\/ $b>0$}. \label{lem2.1}%
\end{equation}
Then $\mathfrak{A}(\rho)$ and $\mathfrak{A}(\mathfrak{A}(\rho))$ are definable
and
\begin{equation}
\mathfrak{A}(\mathfrak{A}(\rho))(\mathrm{d}u)=\rho((u,\infty))\,\mathrm{d}u,
\label{lem2.2}%
\end{equation}
which implies that $\rho$ is determined by $\mathfrak{A}(\rho)$ under the
condition (\ref{lem2.1}).
\end{lem}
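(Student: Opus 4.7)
The plan is to recognize $\mathfrak{A}$ as fractional integration of order $1/2$ (with the normalization $\pi^{-1/2}=\Gamma(1/2)^{-1}$) and to exploit its semigroup property $I^{1/2}\circ I^{1/2}=I^{1}$. Concretely, the argument has three steps: verify definability of $\mathfrak{A}(\rho)$, verify definability of $\mathfrak{A}(\mathfrak{A}(\rho))$, and compute the iterated integral by Fubini using the Beta-function identity $B(1/2,1/2)=\pi$.

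First I would observe that hypothesis \eqref{lem2.1} implies hypothesis \eqref{lem1.1} of Lemma \ref{lem1}, since $s^{-1/2}\leq b^{-1/2}$ on $(b,\infty)$ gives $\int_{(b,\infty)}s^{-1/2}\rho(\mathrm{d}s)\leq b^{-1/2}\rho((b,\infty))<\infty$. Thus $\mathfrak{A}(\rho)$ is definable. To then apply Lemma \ref{lem1} to $\mathfrak{A}(\rho)$, I would use Lemma \ref{lem1a} with $\alpha=-1/2$: the right-hand side of \eqref{lem1a-1} becomes $C_{1}\int_{(b,\infty)}\rho(\mathrm{d}s)=C_{1}\rho((b,\infty))<\infty$, so $\int_{(b,\infty)}u^{-1/2}\mathfrak{A}(\rho)(\mathrm{d}u)<\infty$. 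This establishes that $\mathfrak{A}(\mathfrak{A}(\rho))$ is also definable.

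The core computation is then to write
\[
\mathfrak{A}(\mathfrak{A}(\rho))(\mathrm{d}u)=\pi^{-1/2}\,\mathrm{d}u\int_{(u,\infty)}(v-u)^{-1/2}\mathfrak{A}(\rho)(\mathrm{d}v),
\]
substitute the definition of $\mathfrak{A}(\rho)(\mathrm{d}v)$, and swap the order of integration by Fubini (all integrands are nonnegative, so Tonelli applies without further justification, once the finiteness above is in hand). This turns the inner double integral into
\[
\int_{(u,\infty)}\rho(\mathrm{d}s)\int_{u}^{s}(v-u)^{-1/2}(s-v)^{-1/2}\mathrm{d}v,
\]
and the change of variable $v=u+(s-u)t$ reduces the inner integral to $B(1/2,1/2)=\pi$, independent of $u$ and $s$. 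Combined with the $\pi^{-1/2}\cdot\pi^{-1/2}$ prefactor, this yields $\mathfrak{A}(\mathfrak{A}(\rho))(\mathrm{d}u)=\rho((u,\infty))\,\mathrm{d}u$, which is \eqref{lem2.2}.

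Finally, the uniqueness claim follows formally: if $\mathfrak{A}(\rho_{1})=\mathfrak{A}(\rho_{2})$ then applying $\mathfrak{A}$ once more gives $\rho_{1}((u,\infty))=\rho_{2}((u,\infty))$ for Lebesgue-a.e.\ $u>0$, and right-continuity in $u$ of these tail functions (both finite by \eqref{lem2.1}) forces equality for every $u$, hence $\rho_{1}=\rho_{2}$. I do not anticipate a real obstacle: the only subtlety is ordering the verifications so that Fubini/Tonelli is applied to finite integrals, which Lemma \ref{lem1a} with $\alpha=-1/2$ handles cleanly.
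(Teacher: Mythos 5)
Your proof is correct and follows essentially the same route as the paper: definability via Lemma \ref{lem1} (with Lemma \ref{lem1a}, \eqref{lem1a-1} at $\alpha=-1/2$ supplying condition \eqref{lem1.1} for $\mathfrak{A}(\rho)$), then Tonelli and the identity $B(1/2,1/2)=\pi$ to get \eqref{lem2.2}. Your added remark on recovering $\rho$ from its a.e.-determined, right-continuous tail function just makes explicit what the paper leaves implicit.
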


\noindent\textit{Proof.} Since \eqref{lem2.1} is stronger than \eqref{lem1.1},
$\mathfrak{A}(\rho)$ is definable. Using \eqref{lem1a-1} of Lemma \ref{lem1a},
we see from Lemma \ref{lem1} that $\mathfrak{A}(\mathfrak{A}(\rho))$ is
definable. Next, notice that
\begin{align*}
&  \int_{u}^{\infty}\pi^{-1/2}(s-u)^{-1/2}\,\mathfrak{A}(\rho)(\mathrm{d}s)\\
&  \qquad=\pi^{-1}\int_{u}^{\infty}(s-u)^{-1/2}\mathrm{d}s\int_{(s,\infty
)}(v-s)^{-1/2}\rho(\mathrm{d}v)\\
&  \qquad=\pi^{-1}\int_{(u,\infty)}\rho(\mathrm{d}v)\int_{u}^{v}
(s-u)^{-1/2}(v-s)^{-1/2}\mathrm{d}s=\rho((u,\infty)),
\end{align*}
because
\[
\int_{u}^{v}(s-u)^{-1/2}(v-s)^{-1/2}\mathrm{d}s=\int_{0}^{1}s^{-1/2}
(1-s)^{-1/2}\mathrm{d}s=B(1/2,1/2)=\pi.
\]
Hence \eqref{lem2.2} is true. \qed

\vskip 3mm For the proof of the next theorem, we introduce new functions for
simplicity. For any measure $\rho$ on $(0,\infty)$ and for $k=1,2$, let
\[
\mathsf{a}_{k}(\rho)(r)=\int_{(0,\infty)}a_{k}(r;s)\,\rho(\mathrm{d}%
s),\label{DefArcTrnMes}%
\]
admitting the infinite value.

\begin{thm}
\label{t2} For $k=1,2$, $\mathcal{A}_{k}$ is one-to-one.
\end{thm}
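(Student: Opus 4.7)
The plan is to use Proposition \ref{p-trRep} to reduce to the case $k=1$, then apply the polar decomposition (Proposition \ref{PolDec}) to reduce the injectivity of $\mathcal{A}_{1}$ to the one-dimensional problem of recovering a measure $\rho$ on $(0,\infty)$ from $\mathsf{a}_{1}(\rho)$, which by a simple change of variable is the same as recovering $\rho$ from the fractional integral $\mathfrak{A}(\rho)$. Lemma \ref{lem2} then supplies the inversion.

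The $(p)$-transformation is a bijection on the class of measures with $\nu(\{0\})=0$, since $x\mapsto|x|^{p-1}x$ is a bijection of $\mathbb{R}^{d}\setminus\{0\}$ with inverse $x\mapsto|x|^{1/p-1}x$. Combined with Proposition \ref{p-trRep}, this gives $\mathcal{A}_{2}(\nu)=\mathcal{A}_{2}(\nu')\Leftrightarrow\mathcal{A}_{1}(\nu^{(2)})=\mathcal{A}_{1}((\nu')^{(2)})$, so the one-to-one property of $\mathcal{A}_{2}$ on $\mathfrak{M}_{L}^{2}(\mathbb{R}^{d})$ follows from that of $\mathcal{A}_{1}$ on $\mathfrak{M}_{L}^{1}(\mathbb{R}^{d})$, and it suffices to treat $k=1$.

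For $\nu\in\mathfrak{M}_{L}^{1}(\mathbb{R}^{d})$ with a polar decomposition $(\lambda,\nu_{\xi})$, interchanging the order of integration in \eqref{DefArcTrn1} yields
\begin{equation*}
\mathcal{A}_{1}(\nu)(B)=\int_{\mathbb{S}}\lambda(\mathrm{d}\xi)\int_{0}^{\infty}1_{B}(r\xi)\,\mathsf{a}_{1}(\nu_{\xi})(r)\,\mathrm{d}r,
\end{equation*}
so $(\lambda,\mathsf{a}_{1}(\nu_{\xi})(r)\,\mathrm{d}r)$ is a polar decomposition of $\mathcal{A}_{1}(\nu)$. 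Assume $\mathcal{A}_{1}(\nu)=\mathcal{A}_{1}(\nu')$, and fix polar decompositions $(\lambda,\nu_{\xi})$ and $(\lambda',\nu_{\xi}')$ of $\nu,\nu'$. Applying the uniqueness part of Proposition \ref{PolDec} and then renormalizing the polar decomposition of $\nu'$ by the reciprocal of the resulting function $c(\xi)$, I may arrange that $\lambda'=\lambda$ and that $\mathsf{a}_{1}(\nu_{\xi}')=\mathsf{a}_{1}(\nu_{\xi})$ (as densities in $r$) for $\lambda$-a.e.\ $\xi$.

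Finally, observe that $\mathsf{a}_{1}(\rho)(r)=2\pi^{-1/2}\,h(r^{2})$, where $h$ is the Lebesgue density of $\mathfrak{A}(\rho)$, so $\mathsf{a}_{1}(\nu_{\xi})=\mathsf{a}_{1}(\nu_{\xi}')$ is equivalent to $\mathfrak{A}(\nu_{\xi})=\mathfrak{A}(\nu_{\xi}')$. Since $\int_{\mathbb{S}}\lambda(\mathrm{d}\xi)\,\nu_{\xi}((b,\infty))=\nu(\{|x|>b\})<\infty$ for every $b>0$ (and likewise for $\nu'$), hypothesis \eqref{lem2.1} of Lemma \ref{lem2} holds for $\lambda$-a.e.\ $\xi$, so Lemma \ref{lem2} forces $\nu_{\xi}=\nu_{\xi}'$ $\lambda$-a.e., giving $\nu=\nu'$. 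The main care needed is the bookkeeping of the $\lambda$-a.e.\ nonuniqueness in the polar decomposition; once $\lambda$ is pinned down as a common reference measure for both $\nu$ and $\nu'$, the argument becomes purely one-dimensional in each direction $\xi$ and is handled directly by Lemma \ref{lem2}.
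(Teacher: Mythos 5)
Your proof is correct and follows essentially the same route as the paper's: reduce $k=2$ to $k=1$ via Proposition \ref{p-trRep}, pass to polar decompositions and invoke the uniqueness statement of Proposition \ref{PolDec}, then identify $\mathsf{a}_{1}(\nu_{\xi})$ with the fractional integral $\mathfrak{A}(\nu_{\xi})$ through the substitution $u=r^{2}$ and invert with Lemma \ref{lem2}. The only cosmetic difference is that you normalize away the function $c(\xi)$ at the outset, whereas the paper carries it through to the conclusion $\nu_{\xi}=c(\xi)^{-1}\nu'_{\xi}$.
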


\noindent\textit{Proof.} Case $k=1$. Suppose that $\nu,\nu^{\prime}%
\in\mathfrak{M}_{L}^{1}(\mathbb{R}^{d})$ and $\mathcal{A}_{1}(\nu
)=\mathcal{A}_{1} (\nu^{\prime})$. Let $(\lambda, \nu_{\xi})$ and
$(\lambda^{\prime}, \nu^{\prime}_{\xi})$ be polar decompositions of $\nu$ and
$\nu^{\prime}$, respectively. Then {\allowdisplaybreaks
\begin{align*}
\mathcal{A}_{1}(\nu)(B)  &  =\int_{\mathbb{S}}\lambda(\mathrm{d}\xi) \int%
_{0}^{\infty} 1_{B}(r\xi)\,\mathsf{a}_{1}(\nu_{\xi})(r)\mathrm{d}r,\\
\mathcal{A}_{1}(\nu^{\prime})(B)  &  =\int_{\mathbb{S}} \lambda^{\prime
}(\mathrm{d}\xi)\int_{0}^{\infty} 1_{B}(r\xi)\,\mathsf{a}_{1} (\nu^{\prime
}_{\xi})(r)\mathrm{d}r.
\end{align*}
Hence it follows from Proposition \ref{PolDec} that there is a measurable
function} $c(\xi)$ satisfying $0<c(\xi)<\infty$ such that $\lambda^{\prime
}(\mathrm{d}\xi)=c(\xi)\lambda(\mathrm{d}\xi)$ and $\mathsf{a}_{1}(\nu
^{\prime}_{\xi})(r)\mathrm{d}r=c(\xi)^{-1}\mathsf{a}_{1}(\nu_{\xi
})(r)\mathrm{d}r$ for $\lambda$-a.\,e.\ $\xi$. Thus
\[
\left(  \int_{r^{2}}^{\infty} (s-r^{2})^{-1/2}\nu^{\prime}_{\xi}%
(\mathrm{d}s)\right)  \mathrm{d}r= \left(  c(\xi)^{-1}\int_{r^{2}}^{\infty}
(s-r^{2})^{-1/2} \nu_{\xi}(\mathrm{d}s)\right)  \mathrm{d}r.
\]
Using a new variable $u=r^{2}$, we see that
\[
\left(  \int_{u}^{\infty} (s-u)^{-1/2}\nu^{\prime}_{\xi}(\mathrm{d}s)\right)
\mathrm{d}u= \left(  c(\xi)^{-1}\int_{u}^{\infty} (s-u)^{-1/2}\nu_{\xi}
(\mathrm{d}s)\right)  \mathrm{d}u.
\]
Since $\nu_{\xi}$ and $\nu^{\prime}_{\xi}$ satisfy \eqref{lem2.1}, we obtain
$\nu_{\xi}=c(\xi)^{-1} \nu^{\prime}_{\xi}$ for $\lambda$-a.\,e.\ $\xi$ from
Lemma \ref{lem2}. It follows that $\nu=\nu^{\prime}$.

Case $k=2$. Use Proposition \ref{p-trRep}. Then $\mathcal{A}_{2}(\nu)$ equals
$\mathcal{A}_{1}(\nu^{(2)})$, which determines $\nu^{(2)}$ by Case $k=1$, and
$\nu^{(2)}$ determines $\nu=(\nu^{(2)})^{(1/2)}$. \qed

\vskip 5mm
%%%%%%%%%%%%%%%%%%%%%%%%%%%%%

\subsection{Ranges}

%%%%%%%%%%%%%%%%%%%%%%%%%%%%%

We will show some facts concerning the ranges of $\mathcal{A}_{1}$ and
$\mathcal{A}_{2}$.

\begin{prop}
\label{Range1} The ranges of $\mathcal{A}_{1}$ and $\mathcal{A}_{2}$ are
identical:
\[
\label{Range1-1}\mathfrak{R}(\mathcal{A}_{1})=\mathfrak{R}(\mathcal{A}_{2}).
\]

\end{prop}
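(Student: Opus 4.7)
The plan is to derive this as an immediate corollary of Proposition \ref{p-trRep} together with the domain identification in Theorem \ref{DomArcTrn}. Proposition \ref{p-trRep} says that $\mathcal{A}_2(\nu) = \mathcal{A}_1(\nu^{(2)})$ whenever $\nu \in \mathfrak{M}_L^2(\mathbb{R}^d)$, and $\mathcal{A}_1(\nu) = \mathcal{A}_2(\nu^{(1/2)})$ whenever $\nu \in \mathfrak{M}_L^1(\mathbb{R}^d)$. By Theorem \ref{DomArcTrn}, these $\mathfrak{M}_L^k$-classes are exactly the domains $\mathfrak{D}(\mathcal{A}_k)$, so the two transformations can always be interchanged via a $(p)$-transformation.

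I would argue the two inclusions separately. For $\mathfrak{R}(\mathcal{A}_2) \subset \mathfrak{R}(\mathcal{A}_1)$: pick $\widetilde{\nu} \in \mathfrak{R}(\mathcal{A}_2)$ and write $\widetilde{\nu} = \mathcal{A}_2(\nu)$ with $\nu \in \mathfrak{D}(\mathcal{A}_2) = \mathfrak{M}_L^2(\mathbb{R}^d)$. The equivalence half of Proposition \ref{p-trRep} then gives $\nu^{(2)} \in \mathfrak{M}_L^1(\mathbb{R}^d) = \mathfrak{D}(\mathcal{A}_1)$ and $\mathcal{A}_2(\nu) = \mathcal{A}_1(\nu^{(2)})$, exhibiting $\widetilde{\nu}$ as an element of $\mathfrak{R}(\mathcal{A}_1)$. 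For the reverse inclusion, apply the symmetric assertion of Proposition \ref{p-trRep} with $\nu^{(1/2)}$ in place of $\nu^{(2)}$.

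There is really no obstacle here: once the equivalence of domains in Proposition \ref{p-trRep} is in place, the two $(p)$-transformations are mutual inverses on the relevant classes (via $\nu = (\nu^{(2)})^{(1/2)}$, as noted in the excerpt), so the two ranges coincide set-theoretically. If desired, the proof can be condensed into the single display
\begin{equation*}
\mathfrak{R}(\mathcal{A}_2) = \{\mathcal{A}_2(\nu) : \nu \in \mathfrak{M}_L^2(\mathbb{R}^d)\} = \{\mathcal{A}_1(\nu^{(2)}) : \nu \in \mathfrak{M}_L^2(\mathbb{R}^d)\} = \{\mathcal{A}_1(\eta) : \eta \in \mathfrak{M}_L^1(\mathbb{R}^d)\} = \mathfrak{R}(\mathcal{A}_1),
\end{equation*}
where the third equality uses the bijection $\nu \mapsto \nu^{(2)}$ from $\mathfrak{M}_L^2(\mathbb{R}^d)$ onto $\mathfrak{M}_L^1(\mathbb{R}^d)$ provided by the $(p)$-transformation formalism.
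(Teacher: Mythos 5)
Your proposal is correct and follows exactly the paper's own route: the paper simply states that the result is a direct consequence of Proposition \ref{p-trRep}, and your two-inclusion argument (together with the domain identification from Theorem \ref{DomArcTrn} and the invertibility of the $(p)$-transformations) is the natural expansion of that one-line proof.
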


\noindent\textit{Proof.} This is a direct consequence of Proposition
\ref{p-trRep}. \qed

\vskip 3mm Let us show some necessary conditions for $\widetilde{\nu}$ to
belong to the range.

\begin{prop}
\label{Range2} If\/ $\widetilde{\nu}$ is in the common range of $\mathcal{A}%
_{1}$ and $\mathcal{A}_{2}$, then $\widetilde{\nu}$ is in $\mathfrak{M}%
_{L}(\mathbb{R}^{d})$ with a polar decomposition $(\lambda, \ell_{\xi}(r)dr)$
having the following properties: $\ell_{\xi}(r)$ is measurable in $(\xi,r)$
and lower semi-continuous in $r\in(0,\infty)$, and there is $b_{\xi}%
\in(0,\infty]$ such that $\ell_{\xi}(r)>0$ for $r<b_{\xi}$ and, if\/ $b_{\xi
}<\infty$, then $\ell_{\xi}(r)=0$ for $r\geq b_{\xi}$.
\end{prop}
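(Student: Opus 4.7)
\noindent\emph{Proof plan.} By Proposition~\ref{Range1} the common range of $\mathcal{A}_1$ and $\mathcal{A}_2$ coincides with $\mathfrak{R}(\mathcal{A}_2)$, so the plan is to write $\widetilde{\nu}=\mathcal{A}_2(\nu)$ for some $\nu\in\mathfrak{M}_L^2(\mathbb{R}^d)$, fix a polar decomposition $(\lambda,\nu_\xi)$ of $\nu$ via Proposition~\ref{PolDec}, and substitute it into Definition~\ref{d1}. After swapping the order of integration this yields
\[
\widetilde{\nu}(B)=\int_{\mathbb{S}}\lambda(\mathrm{d}\xi)\int_0^\infty 1_B(r\xi)\,\ell_\xi(r)\,\mathrm{d}r,\qquad \ell_\xi(r):=\int_{(r,\infty)}\frac{2/\pi}{\sqrt{s^2-r^2}}\,\nu_\xi(\mathrm{d}s).
\]
The remaining task is to verify that $(\lambda,\ell_\xi(r)\,\mathrm{d}r)$ is a legitimate polar decomposition of $\widetilde{\nu}$ and that $\ell_\xi$ has the three listed properties.

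Joint measurability of $\ell_\xi(r)$ in $(\xi,r)$ follows from measurability of $\xi\mapsto\nu_\xi(E)$ in Proposition~\ref{PolDec}, combined with joint Borel measurability of the kernel $(r,s)\mapsto(s^2-r^2)^{-1/2}1_{\{s>r\}}$ on $(0,\infty)^2$, via a standard monotone-class approximation.

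For lower semicontinuity in $r$, the key observation is that for each fixed $s>0$ the function $g_s(r):=(s^2-r^2)^{-1/2}$ for $r\in(0,s)$ and $g_s(r):=0$ for $r\geq s$ is lower semicontinuous on $(0,\infty)$: continuity on $(0,s)\cup(s,\infty)$ is clear, and at $r=s$ the left limit is $+\infty$ while the right limit and the value $g_s(s)$ are both $0$, so $\liminf_{r\to s}g_s(r)=g_s(s)$. Fatou's lemma then gives lower semicontinuity of $\ell_\xi=\int(2/\pi)g_s\,\nu_\xi(\mathrm{d}s)$.

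Finally, set $b_\xi:=\sup\{s>0:\nu_\xi((s,\infty))>0\}\in(0,\infty]$; this is positive because $\nu_\xi((0,\infty))>0$ together with inner continuity of measure forces $\nu_\xi((s,\infty))>0$ for some $s>0$. If $b_\xi<\infty$, monotone convergence along $(b_\xi+\varepsilon,\infty)\uparrow(b_\xi,\infty)$ as $\varepsilon\downarrow 0$ gives $\nu_\xi((b_\xi,\infty))=0$, hence $\ell_\xi(r)=0$ for all $r\geq b_\xi$. Conversely, for $r<b_\xi$ one picks $s\in(r,b_\xi)$ with $\nu_\xi((s,\infty))>0$, and strict positivity of the kernel on $(s,\infty)\subset(r,\infty)$ yields $\ell_\xi(r)>0$; in particular $\int_0^\infty\ell_\xi(r)\,\mathrm{d}r>0$, confirming that $(\lambda,\ell_\xi(r)\,\mathrm{d}r)$ really is a polar decomposition of $\widetilde{\nu}$. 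The only mildly delicate step is the lower-semicontinuity verification at $r=s$, where the one-sided singularity has to be reconciled with the convention $g_s(s)=0$; everything else is essentially bookkeeping on the polar decomposition.
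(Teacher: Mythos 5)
Your proposal is correct and follows essentially the same route as the paper: write $\widetilde{\nu}=\mathcal{A}_{k}(\nu)$, fix a polar decomposition $(\lambda,\nu_{\xi})$ of $\nu$, and read off the radial density $\ell_{\xi}(r)=\mathsf{a}_{k}(\nu_{\xi})(r)$. The only difference is that the paper delegates the verification of lower semicontinuity and of the $b_{\xi}$ structure to Proposition 2.13 of Sato (2010), whereas you carry it out directly (Fatou's lemma applied to the lower semicontinuous kernel for the first property, the support of $\nu_{\xi}$ for the second); your verifications are sound.
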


\noindent\textit{Proof.} Let $\widetilde{\nu}=\mathcal{A}_{k}(\nu)$ with
$\nu\in\mathfrak{M}_{L}^{k}(\mathbb{R}^{d})$ and $(\lambda,\nu_{\xi})$ a polar
decomposition of $\nu$. Then $\widetilde{\nu}\in\mathfrak{M}_{L}%
(\mathbb{R}^{d})$ with polar decomposition $(\lambda,\mathsf{a}_{k}(\nu_{\xi
})(r)\mathrm{d}r)$ from the definition. Recall that
\[
\mathsf{a}_{k}(\nu_{\xi})(r)=2\pi^{-1}\int_{(r^{2/k},\infty)}(s^{k}%
-r^{2})^{-1/2}\nu_{\xi}(\mathrm{d}s).
\]
Then our assertion is proved in the same way as Proposition 2.13 of
\cite{Sato10}. \qed

\vskip 5mm
%%%%%%%%%%%%%%%%%%%%%%%%%%%%%%%%%%%%%%%%%%%%%%%%%%%%%%%

\subsection{How big is $\mathfrak{R}(\mathcal{A}_{k})$?}

%%%%%%%%%%%%%%%%%%%%%%%%%%%%%%%%%%%%%%%%%%%%%%%%%%%%%%

Several well-known and well studied classes of multivariate infinitely
divisible distributions are the following. The Jurek class, the class of
selfdecomposable distributions, the Goldie-Steutel-Bondesson class, the Thorin
class and the class of generalized type $G$ distributions. They are
characterized only by the radial component of their L\'{e}vy measures with no
influence of $\Sigma$ and $\gamma$ in the L\'{e}vy-Khintchine triplet. Among
them, the Jurek class is the biggest. Recently, bigger than the Jurek class
have been discussed in the study of extension of selfdecomposability, (see,
e.g. \cite{MMS10} and \cite{Sato10}). Then a natural question is how big
$\mathfrak{R }(\mathcal{A}_{k})$ is. Let $\mathfrak{M}_{L}^{U}(\mathbb{R}%
^{d})$ be the class of L\'{e}vy measures of distributions in the Jurek class.
The radial component $\nu_{\xi}$ of $\nu\in\mathfrak{M}_{L}^{U}(\mathbb{R}%
^{d})$ satisfies that $\nu_{\xi}(\mathrm{d}r)=\ell_{\xi}(r)\mathrm{d}r,r>0,$
where $\ell_{\xi}(r)$ is measurable in $(\xi,r)$ and decreasing and
right-continuous in $r>0$. We will show below that $\mathfrak{R }%
(\mathcal{A}_{k})$ is at least strictly bigger than $\mathfrak{M}_{L}%
^{U}(\mathbb{R}^{d})$.

\begin{thm}
\label{Range3} We have
\[
\mathfrak{M}_{L}^{U}(\mathbb{R}^{d})\subsetneqq\mathfrak{R}(\mathcal{A}_{1})
=\mathfrak{R}(\mathcal{A}_{2}).\label{Range3-1}%
\]

\end{thm}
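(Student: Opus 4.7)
The equality $\mathfrak{R}(\mathcal{A}_{1})=\mathfrak{R}(\mathcal{A}_{2})$ is Proposition~\ref{Range1}, so the task reduces to proving the strict inclusion $\mathfrak{M}_{L}^{U}(\mathbb{R}^{d})\subsetneqq\mathfrak{R}(\mathcal{A}_{1})$. The plan is to establish the inclusion by an explicit Abel inversion of the arcsine transform in each radial slice, and to witness strictness by a single direct computation with a point mass.

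For the inclusion, let $\widetilde{\nu}\in\mathfrak{M}_{L}^{U}(\mathbb{R}^{d})$ have polar decomposition $(\lambda,\ell_{\xi}(r)\,\mathrm{d}r)$, with $\ell_{\xi}(r)$ decreasing and right-continuous in $r>0$. After the substitution $u=r^{2}$, the equation $\ell_{\xi}(r)=\mathsf{a}_{1}(\nu_{\xi})(r)$ becomes a half-order Abel equation, which Lemma~\ref{lem2} suggests inverting by setting
\[
\nu_{\xi}\bigl((u,\infty)\bigr)\,:=\,\tfrac{1}{2}\int_{u}^{\infty}(s-u)^{-1/2}\ell_{\xi}(\sqrt{s})\,\mathrm{d}s\,=\,\tfrac{1}{2}\int_{0}^{\infty}v^{-1/2}\ell_{\xi}(\sqrt{u+v})\,\mathrm{d}v,
\]
the second form showing at once that the right-hand side is decreasing in $u$, so that $\nu_{\xi}$ is a well-defined measure on $(0,\infty)$. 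Taking $\nu$ to have polar decomposition $(\lambda,\nu_{\xi})$, the checks I would carry out in order are: (i) $\nu_{\xi}((b,\infty))<\infty$ for every $b>0$, which uses only $\int_{0}^{\infty}(1\wedge r^{2})\ell_{\xi}(r)\,\mathrm{d}r<\infty$ together with the asymptotic $2r(r^{2}-u)^{-1/2}\to 2$ as $r\to\infty$; (ii) $\nu\in\mathfrak{M}_{L}^{1}(\mathbb{R}^{d})$, via $\int_{0}^{1}\nu_{\xi}((u,\infty))\,\mathrm{d}u\leq 2\int_{0}^{1}r^{2}\ell_{\xi}(r)\,\mathrm{d}r+2\int_{1}^{\infty}r(r-\sqrt{r^{2}-1})\ell_{\xi}(r)\,\mathrm{d}r<\infty$ by Fubini; (iii) joint measurability in $\xi$, inherited from that of $\ell_{\xi}(r)$; (iv) the identity $\mathsf{a}_{1}(\nu_{\xi})=\ell_{\xi}$ a.e., obtained by a second application of $\mathfrak{A}$: Lemma~\ref{lem2} applied to $\nu_{\xi}$ gives $\mathfrak{A}(\mathfrak{A}(\nu_{\xi}))(\mathrm{d}u)=\nu_{\xi}((u,\infty))\,\mathrm{d}u$, and by construction this equals the density of $\mathfrak{A}(\ell_{\xi}(\sqrt{\cdot})\,\mathrm{d}u)$, so that the half-order Abel transforms of $\mathsf{a}_{1}(\nu_{\xi})(\sqrt{\cdot})$ and of $\ell_{\xi}(\sqrt{\cdot})$ coincide, and classical injectivity of the half-order Abel transform on $L^{1}_{\mathrm{loc}}(0,\infty)$ yields the claim.

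For strictness I would take $d=1$ and $\nu=\delta_{1}\in\mathfrak{M}_{L}^{1}(\mathbb{R})$. A direct computation from Definition~\ref{d1} gives that $\mathcal{A}_{1}(\delta_{1})$ is supported on $(0,1)$ with density $(2/\pi)(1-r^{2})^{-1/2}$, which is strictly increasing on $(0,1)$, so $\mathcal{A}_{1}(\delta_{1})\in\mathfrak{R}(\mathcal{A}_{1})\setminus\mathfrak{M}_{L}^{U}(\mathbb{R})$. The main obstacle I expect is step (iv): Lemma~\ref{lem2} only furnishes injectivity of $\mathfrak{A}$ on measures satisfying \eqref{lem2.1}, and for a generic Jurek element neither $\mathsf{a}_{1}(\nu_{\xi})(\sqrt{u})\,\mathrm{d}u$ nor $\ell_{\xi}(\sqrt{u})\,\mathrm{d}u$ need satisfy that finiteness condition. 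One therefore has to invoke the classical Abel-inversion injectivity on $L^{1}_{\mathrm{loc}}$ and to verify that the densities in question are locally integrable on $(0,\infty)$, which in turn follows from $\nu_{\xi}\in\mathfrak{M}_{L}^{1}$ together with the L\'evy-density integrability of $\ell_{\xi}$ away from $0$.
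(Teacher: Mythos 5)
Your overall architecture is the right one and in fact coincides with the paper's: the measure you construct is exactly the paper's candidate (your $\nu_{\xi}$ equals $(\pi^{1/2}/2)\,\eta_{\xi}$ with $\eta_{\xi}=\mathfrak{A}(\rho_{\xi})$ and $\rho_{\xi}((r^{2},\infty))=\ell_{\xi}(r)$), your estimate in step (ii) is the correct one, and your strictness witness $\mathcal{A}_{1}(\delta_{1})$, with density $2\pi^{-1}(1-r^{2})^{-1/2}$ increasing on $(0,1)$, is precisely the example the paper uses. The gap is where you yourself suspect it: step (iv). The ``classical injectivity of the half-order Abel transform on $L^{1}_{\mathrm{loc}}(0,\infty)$'' is not available in the form you need. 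The operator here is the \emph{right-sided} (Weyl) fractional integral $f\mapsto\int_{u}^{\infty}(s-u)^{-1/2}f(s)\,\mathrm{d}s$, and the only elementary route to its injectivity is the composition identity $I^{1/2}\circ I^{1/2}=I^{1}$ used in Lemma \ref{lem2}; that argument yields $\int_{u}^{\infty}f=\int_{u}^{\infty}g$ and is vacuous whenever both tails are infinite. For a generic Jurek element this is exactly what happens: take $\ell_{\xi}(r)=1$ for $0<r<1$ and $\ell_{\xi}(r)=r^{-2}$ for $r\ge 1$; then $\ell_{\xi}(\sqrt{s})=s^{-1}$ for $s\ge1$ is not integrable at infinity, so neither $\ell_{\xi}(\sqrt{\cdot})\,\mathrm{d}u$ nor $\mathsf{a}_{1}(\nu_{\xi})(\sqrt{\cdot})\,\mathrm{d}u$ satisfies \eqref{lem2.1}, and the double-application trick tells you nothing. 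So citing injectivity on $L^{1}_{\mathrm{loc}}$ leaves a real hole rather than a routine verification.

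The paper's proof avoids needing injectivity at all, and this is the idea your write-up is missing. Because $\ell_{\xi}$ is \emph{decreasing and right-continuous} (the Jurek property, which your argument otherwise never uses beyond monotonicity of the tail in $u$), the function $u\mapsto\ell_{\xi}(\sqrt{u})$ is the tail of a measure $\rho_{\xi}$ on $(0,\infty)$ with $\rho_{\xi}((b,\infty))<\infty$ for all $b>0$, i.e.\ $\rho_{\xi}$ satisfies \eqref{lem2.1}. Setting $\eta_{\xi}=\mathfrak{A}(\rho_{\xi})$, Lemma \ref{lem2} applied \emph{forwards} to $\rho_{\xi}$ gives $\mathfrak{A}(\eta_{\xi})(\mathrm{d}u)=\rho_{\xi}((u,\infty))\,\mathrm{d}u=\ell_{\xi}(\sqrt{u})\,\mathrm{d}u$, which after the substitution $u=r^{2}$ \emph{is} the identity $(\pi^{1/2}/2)\mathsf{a}_{1}(\eta_{\xi})(r)=\ell_{\xi}(r)$; no inversion or uniqueness statement is invoked. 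In other words, rather than defining the candidate by its tail and then trying to check that $\mathsf{a}_{1}$ maps it back to $\ell_{\xi}$, one should define it as $\mathfrak{A}$ of an auxiliary measure whose tail is $\ell_{\xi}(\sqrt{\cdot})$, so that the desired identity is an instance of Lemma \ref{lem2} rather than a consequence of injectivity. The remaining work (the analogue of your (i)--(iii)) is then carried out via Lemma \ref{lem1a} exactly as in your estimates, after normalizing $\lambda$ so that $\int_{0}^{\infty}(1\wedge r^{2})\ell_{\xi}(r)\,\mathrm{d}r$ is constant in $\xi$ --- a normalization you should state explicitly, since the $\lambda$-integrability in \eqref{Range3-2} depends on it.
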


\noindent\textit{Proof.} Let $\widetilde{\nu}\in\mathfrak{M}_{L}%
^{U}(\mathbb{R}^{d})$. Equivalently, let $\widetilde{\nu}\in\mathfrak{M}%
_{L}(\mathbb{R}^{d})$ with a polar decomposition $(\lambda,\ell_{\xi
}(r)\mathrm{d}r)$ such that $\ell_{\xi}(r)$ is measurable in $(\xi,r)$ and
decreasing and right-continuous in $r>0$. Further, we may and do assume that
$\lambda$ is a probability measure and
\[
\int_{0}^{\infty}(1\wedge r^{2})\ell_{\xi}(r)\mathrm{d}r=c:=\int%
_{\mathbb{R}^{d}}(1\wedge|x|^{2})\widetilde{\nu}(\mathrm{d}x).
\]
Let $\rho_{\xi}$ be a measure on $(0,\infty)$ such that $\rho_{\xi}%
((r^{2},\infty))=\ell_{\xi}(r)$ for $r>0$ and let $\eta_{\xi}=\mathfrak{A}%
(\rho_{\xi})$. Lemma \ref{lem2} says that $\eta_{\xi}$ is definable and
\[
\rho_{\xi}((u,\infty))=\int_{(u,\infty)}\pi^{-1/2}(s-u)^{-1/2}\eta_{\xi
}(\mathrm{d}s)\quad\text{for Lebesgue a.\thinspace e.\ $u>0$}.
\]
Note that $\eta_{\xi}(E)$ is measurable in $\xi$ for each $E\in\mathcal{B}%
((0,\infty))$. We have, for $B\in\mathcal{B}(\mathbb{R}^{d})$,
\begin{align*}
\widetilde{\nu}(B)  &  =\int_{\mathbb{S}}\lambda(\mathrm{d}\xi)\int%
_{0}^{\infty}1_{B}(r\xi)\rho_{\xi}((r^{2},\infty))\mathrm{d}r\\
&  =\int_{\mathbb{S}}\lambda(\mathrm{d}\xi)\int_{0}^{\infty}1_{B}%
(r\xi)\mathrm{d}r\int_{(r^{2},\infty)}\pi^{-1/2}(s-r^{2})^{-1/2}\eta_{\xi
}(\mathrm{d}s)\\
&  =\int_{\mathbb{S}}\lambda(\mathrm{d}\xi)\int_{0}^{\infty}1_{B}(r\xi
)(\pi^{1/2}/2)\mathsf{a}_{1}(\eta_{\xi})(r)\mathrm{d}r.
\end{align*}
We claim that
\begin{equation}
\int_{\mathbb{S}}\lambda(\mathrm{d}\xi)\int_{0}^{\infty}(1\wedge u)\eta_{\xi
}(\mathrm{d}u)<\infty. \label{Range3-2}%
\end{equation}
This will ensure that $(\lambda,(\pi^{1/2}/2)\eta_{\xi}(\mathrm{d}r))$ is a
polar decomposition of some $\nu\in\mathfrak{M}_{L}^{1}(\mathbb{R}^{d})$ and
that $\widetilde{\nu}=\mathcal{A}_{1}(\nu)$. First, notice that
\begin{align*}
c  &  =\int_{0}^{\infty}(1\wedge r^{2})\rho_{\xi}((r^{2},\infty))\mathrm{d}%
r=\frac{1}{2}\int_{0}^{\infty}(1\wedge u)\rho_{\xi}((u,\infty))u^{-1/2}%
\mathrm{d}u\\
&  =\frac{1}{2}\int_{0}^{1}u^{1/2}\rho_{\xi}((u,\infty))\mathrm{d}u+\frac
{1}{2}\int_{1}^{\infty}u^{-1/2}\rho_{\xi}((u,\infty))\mathrm{d}u\\
&  \geq\frac{1}{3}\rho_{\xi}((1,\infty))+\frac{1}{2}\int_{1}^{\infty}%
u^{-1/2}\rho_{\xi}((u,\infty))\mathrm{d}u.
\end{align*}
Then, use \eqref{lem1a-1} of Lemma \ref{lem1a} with $\alpha=0$ to obtain
\begin{align*}
&  \int_{(1,\infty)}\eta_{\xi}(\mathrm{d}u)=\int_{(1,\infty)}\mathfrak{A}%
(\rho_{\xi})(\mathrm{d}u)\leq C_{1}\int_{(1,\infty)}s^{1/2}\rho_{\xi
}(\mathrm{d}s)\\
&  \qquad=C_{1}\rho_{\xi}((1,\infty))+\frac{C_{1}}{2}\int_{1}^{\infty}%
s^{-1/2}\rho_{\xi}((s,\infty))\mathrm{d}s\leq3cC_{1}.
\end{align*}
Similarly, using \eqref{lem1a-2} of Lemma \ref{lem1a} with $\alpha=1$,
\begin{align*}
&  \int_{(0,1]}u\,\eta_{\xi}(\mathrm{d}u)=\int_{(0,1]}u\,\mathfrak{A}%
(\rho_{\xi})(\mathrm{d}u)\\
&  \qquad\leq C_{2}\left(  \int_{(0,1]}s^{3/2}\rho_{\xi}(\mathrm{d}%
s)+\int_{(1,\infty)}s^{-1/2}\rho_{\xi}(\mathrm{d}s)\right) \\
&  \qquad\leq C_{2}\left(  \frac{3}{2}\int_{0}^{1}s^{1/2}\rho_{\xi
}((s,1])\mathrm{d}s+\int_{(1,\infty)}s^{1/2}\rho_{\xi}(\mathrm{d}s)\right)
\leq6cC_{2}.
\end{align*}
Hence \eqref{Range3-2} is true. It follows that $\mathfrak{M}_{L}%
^{U}(\mathbb{R}^{d})\subset\mathfrak{R}(\mathcal{A}_{1})$.

To see the inclusion is strict, let $\delta_{1}$ be Dirac measure at $1$ and
$\lambda$ a probability measure on $\mathbb{S}$. Consider $\eta\in
\mathfrak{R}(\mathcal{A}_{1})$ defined by
\begin{align*}
\eta(B)  &  =\int_{\mathbb{S}}\lambda(d\xi)\int_{0}^{\infty} 1_{B}%
(r\xi)\mathsf{a}_{1}(\delta_{1})(r)\mathrm{d}r\\
&  =\int_{\mathbb{S}}\lambda(d\xi)\int_{0}^{1} 1_{B}(r\xi)2\pi^{-1}%
(1-r^{2})^{-1/2}\mathrm{d}r.
\end{align*}
Then $\eta\not \in \mathfrak{M}_{L}^{U}(\mathbb{R}^{d})$, since the radial
component has density strictly increasing on $(0,1)$. \qed

\vskip 5mm
%%%%%%%%%%%%%%%%%%%%%%%%%%%%%%%%%%%%%%%%%%%%%%%%%%%%%%%%%%%%%%%%%%%%%%%%%%

\subsection{$\mathcal{A}_{1}$ and $\mathcal{A}_{2}$ as (modified) Upsilon
transformations}

%%%%%%%%%%%%%%%%%%%%%%%%%%%%%%%%%%%%%%%%%%%%%%%%%%%%%%%%%%%%%%%%%%%%%%%%%%%

Barndorff-Nielsen, Rosi\'{n}ski and Thorbj\o rnsen \cite{BNRT08} considered
general Upsilon transformations, (see also \cite{BNM08} and \cite{Sa07}).
Given a measure $\tau$ on $(0,\infty)$, a transformation $\Upsilon_{\tau}$
from measures on $\mathbb{R}^{d}$ into $\mathfrak{M}_{L}(\mathbb{R}^{d})$ is
called an Upsilon transformation associated to $\tau$ (or with dilation
measure $\tau$) when
\begin{equation}
\Upsilon_{\tau}(\nu)(B)=\int_{0}^{\infty}\nu(u^{-1}B)\tau(\mathrm{d}u),\qquad
B\in\mathcal{B}(\mathbb{R}^{d}). \label{GeUpsMap}%
\end{equation}
The domain of $\Upsilon_{\tau}$ is the class of $\sigma$-finite measures $\nu$
such that the right-hand side of \eqref{GeUpsMap} is a measure in
$\mathfrak{M}_{L}^{2}(\mathbb{R}^{d})$.

We now see that $\mathcal{A}_{2}$ is an Upsilon transformation and that
$\mathcal{A}_{1}$ is an Upsilon transformation combined with $(1/2)$-transformation.

\begin{thm}
\label{UpsIdent} Let $k=1,2$. Then for $\nu\in\mathfrak{M}_{L}^{k}%
(\mathbb{R}^{d})$
\begin{equation}
\mathcal{A}_{k}(\nu)(B)=\int_{0}^{1} \nu^{(k/2)}(u^{-1}B) 2\pi^{-1}
(1-u^{2})^{-1/2}\mathrm{d}u, \qquad B\in\mathcal{B}(\mathbb{R}^{d}).
\label{PrUpsIdent1}%
\end{equation}

\end{thm}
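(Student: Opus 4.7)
The approach is a direct computation, starting from the definition \eqref{DefArcTrn3} of $\mathcal{A}_k(\nu)$ and reducing the inner radial integral to an integral over $u\in(0,1)$ by a single rescaling. For each fixed $x\neq 0$, I substitute $r=u|x|^{k/2}$ in the integral
\[
\int_{0}^{|x|^{k/2}} 2\pi^{-1}(|x|^{k}-r^{2})^{-1/2}\,1_{B}\!\left(r\tfrac{x}{|x|}\right)\mathrm{d}r.
\]
Under this change of variable the interval $r\in(0,|x|^{k/2})$ becomes $u\in(0,1)$, the differential becomes $\mathrm{d}r=|x|^{k/2}\mathrm{d}u$, and the kernel simplifies as $(|x|^{k}-r^{2})^{-1/2}=|x|^{-k/2}(1-u^{2})^{-1/2}$. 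The two powers of $|x|^{k/2}$ cancel, leaving the $x$-independent density $2\pi^{-1}(1-u^{2})^{-1/2}\mathrm{d}u$.

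The position argument becomes $r\,x/|x|=u\cdot|x|^{k/2}(x/|x|)$, i.e.\ the point $u\cdot y$ where $y=|x|^{k/2}(x/|x|)$ is precisely the image of $x$ under the $(k/2)$-transformation (Definition~\ref{p-transf}). Since all integrands are nonnegative, Tonelli's theorem lets me swap the order of integration, yielding
\[
\mathcal{A}_{k}(\nu)(B)=\int_{0}^{1}2\pi^{-1}(1-u^{2})^{-1/2}\,\mathrm{d}u\int_{\mathbb{R}^{d}\setminus\{0\}} 1_{B}\!\left(u\,|x|^{k/2}\tfrac{x}{|x|}\right)\nu(\mathrm{d}x).
\]
For $u>0$, $1_{B}(uy)=1_{u^{-1}B}(y)$, so the inner integral equals $\int 1_{u^{-1}B}(|x|^{k/2}x/|x|)\,\nu(\mathrm{d}x)=\nu^{(k/2)}(u^{-1}B)$ by the very definition of the $(p)$-transformation with $p=k/2$. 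This is \eqref{PrUpsIdent1}.

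There is no substantive obstacle: the computation is clean because the arcsine density $a_{k}(\cdot;s)$ was scaled in $s^{k}$ precisely to match the dilation power $k/2$. The only point needing a word of justification is the applicability of Tonelli, but since $\nu$ (on $\mathbb{R}^{d}\setminus\{0\}$) and Lebesgue measure on $(0,1)$ are $\sigma$-finite and the integrand is nonnegative and jointly measurable in $(x,u)$, nothing beyond measurability has to be verified; finiteness of the resulting double integral on sets $B$ with $\mathcal{A}_{k}(\nu)(B)<\infty$ is automatic from $\nu\in\mathfrak{M}_{L}^{k}(\mathbb{R}^{d})$ by Theorem~\ref{DomArcTrn}.
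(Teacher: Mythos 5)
Your proof is correct and is essentially the paper's own argument: the same substitution $r=u s^{k/2}$ turning the arcsine kernel into the $x$-independent density $2\pi^{-1}(1-u^{2})^{-1/2}\mathrm{d}u$ on $(0,1)$, followed by Tonelli and the identification of the inner integral as $\nu^{(k/2)}(u^{-1}B)$. The only cosmetic difference is that the paper routes the computation through a polar decomposition $(\lambda,\nu_{\xi})$ of $\nu$ while you work directly with \eqref{DefArcTrn3}, which changes nothing of substance.
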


\noindent\textit{Proof.} Let $(\lambda,\nu_{\xi})$ be a polar decomposition of
$\nu\in\mathfrak{M}_{L}^{k}(\mathbb{R}^{d})$. Then with $c=2\pi^{-1}$
\begin{align*}
\mathcal{A}_{k}(\nu)(B)  &  =c\int_{\mathbb{S}}\lambda(\mathrm{d}\xi) \int%
_{0}^{\infty}1_{B}(r\xi)\mathrm{d}r\int_{(r^{2/k},\infty)}(s^{k}-r^{2}%
)^{-1/2}\nu_{\xi}(\mathrm{d}s)\\
&  =c\int_{\mathbb{S}}\lambda(\mathrm{d}\xi)\int_{0}^{\infty}\nu_{\xi
}(\mathrm{d}s) \int_{0}^{s^{k/2}}1_{B}(r\xi)(s^{k}-r^{2})^{-1/2}\mathrm{d}r\\
&  =c\int_{\mathbb{S}}\lambda(\mathrm{d}\xi)\int_{0}^{\infty}\nu_{\xi
}(\mathrm{d}s) \int_{0}^{1}1_{B}(us^{k/2}\xi)(1-u^{2})^{-1/2}\mathrm{d}u\\
&  =c\int_{0}^{1}(1-u^{2})^{-1/2}\mathrm{d}u\int_{\mathbb{S}}\lambda
(\mathrm{d}\xi)\int_{0}^{\infty}1_{B}(us^{k/2}\xi)\nu_{\xi}(\mathrm{d}s)\\
&  =c\int_{0}^{1}(1-u^{2})^{-1/2}\mathrm{d}u\int_{\mathbb{S}}\lambda
(\mathrm{d}\xi)\int_{0}^{\infty}1_{B}(us\xi){\nu_{\xi}}^{(k/2)}(\mathrm{d}s)\\
&  =c\int_{0}^{1}(1-u^{2})^{-1/2}\mathrm{d}u\int_{\mathbb{R}^{d}}1_{B}%
(ux)\nu^{(k/2)}(\mathrm{d}x),
\end{align*}
which shows \eqref{PrUpsIdent1}. \qed

\begin{cor}
\label{CorUpsA2} The transformation $\mathcal{A}_{2}$ is an Upsilon
transformation with dilation measure $\tau(\mathrm{d}u)$ $=a_{1}
(u;1)\mathrm{d}u.$ In other words, the expression $\widetilde{\nu}
=\mathcal{A}_{2}(\nu)$ for $\nu\in\mathfrak{M}_{L}^{2}(\mathbb{R}^{d})$ is
written as $\widetilde{\nu}(B)=\mathrm{E}\left[  \nu(A^{-1}B)\right]  ,\quad
B\in\mathcal{B}(\mathbb{R}^{d}), $ where $A$ is a random variable with arcsine
density $a_{1}(u;1).$
\end{cor}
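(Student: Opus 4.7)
The plan is to read off this corollary as an immediate specialization of Theorem \ref{UpsIdent} at $k=2$. When $k=2$ we have $k/2=1$, so the $(k/2)$-transformation is the identity and $\nu^{(k/2)}=\nu$. Plugging this into \eqref{PrUpsIdent1} yields
\[
\mathcal{A}_{2}(\nu)(B)=\int_{0}^{1}\nu(u^{-1}B)\,2\pi^{-1}(1-u^{2})^{-1/2}\mathrm{d}u,\qquad B\in\mathcal{B}(\mathbb{R}^{d}),
\]
for every $\nu\in\mathfrak{M}_{L}^{2}(\mathbb{R}^{d})$.

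The next step is simply to recognize the weight $2\pi^{-1}(1-u^{2})^{-1/2}\mathbf{1}_{(0,1)}(u)$ as the one-sided arcsine density $a_{1}(u;1)$ introduced at the beginning of Section~2. Since $a_{1}(u;1)$ vanishes outside $(0,1)$, extending the range of integration to $(0,\infty)$ costs nothing, and the identity above becomes
\[
\mathcal{A}_{2}(\nu)(B)=\int_{0}^{\infty}\nu(u^{-1}B)\,\tau(\mathrm{d}u)\qquad \text{with }\tau(\mathrm{d}u)=a_{1}(u;1)\mathrm{d}u,
\]
which is exactly the Upsilon expression \eqref{GeUpsMap}. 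The domain condition built into the definition of $\Upsilon_{\tau}$ — namely that the resulting measure lies in $\mathfrak{M}_{L}^{2}(\mathbb{R}^{d})$ — is guaranteed by Theorem \ref{DomArcTrn}, so $\mathcal{A}_{2}=\Upsilon_{\tau}$ on $\mathfrak{M}_{L}^{2}(\mathbb{R}^{d})$.

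For the probabilistic rewriting, note that $\tau$ is a probability measure: a direct computation, $\int_{0}^{1}2\pi^{-1}(1-u^{2})^{-1/2}\mathrm{d}u=2\pi^{-1}\arcsin 1=1$, confirms this. Hence $\tau=\mathcal{L}(A)$ for some random variable $A$ with density $a_{1}(\cdot;1)$, and the Upsilon integral is exactly $\mathrm{E}[\nu(A^{-1}B)]$, giving the claimed representation $\widetilde{\nu}(B)=\mathrm{E}[\nu(A^{-1}B)]$.

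There is no serious obstacle: the corollary is essentially a restatement of Theorem \ref{UpsIdent} with $k=2$. The only point worth flagging is conceptual rather than technical — it is precisely the triviality of the $(k/2)$-transformation at $k=2$ that allows $\mathcal{A}_{2}$ to be written as a pure Upsilon transformation, whereas $\mathcal{A}_{1}$ picks up the nontrivial factor $\nu^{(1/2)}$ and therefore only factors as an Upsilon transformation composed with a $(1/2)$-transformation, in line with the discussion preceding Theorem \ref{UpsIdent}.
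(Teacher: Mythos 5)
Your proposal is correct and follows exactly the route the paper intends: the corollary is stated without a separate proof precisely because it is the $k=2$ specialization of Theorem \ref{UpsIdent}, where $\nu^{(k/2)}=\nu^{(1)}=\nu$, and the weight $2\pi^{-1}(1-u^{2})^{-1/2}1_{(0,1)}(u)$ is $a_{1}(u;1)$. Your added check that $a_{1}(\cdot\,;1)$ integrates to one, justifying the expectation form $\mathrm{E}[\nu(A^{-1}B)]$, is a small but welcome completion of the paper's implicit argument.
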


\begin{rem}
The mapping $\mathcal{A}_{1}$ is not an Upsilon transformation for any
dilation measure $\tau$. This remarkable result will be proved in Section 3.6,
as a byproduct of Theorem \ref{noncom} shown in Section 3.5.
\end{rem}

\vskip 10mm

%%%%%%%%%%%%%%%%%%%% Section 4%%%%%%%%%%%%%%%%%%%%%%%%%%%

\section{Type \textit{A} distributions on $\mathbb{R}^{d}$}

%%%%%%%%%%%%%%%%%%%%%%%%%%%%%%%%%%%%%%%%%%%%%%%%%%%%%%%%%%
\vskip 5mm
%%%%%%%%%%%%%%%%%%%%%%%%%%%%%%%%%%%%%%%%%%%%%%%%%%%%%%%%%%%%%%

\subsection{Definition and stochastic integral representation via arcsine
transformations}

%%%%%%%%%%%%%%%%%%%%%%%%%%%%%%%%%%%%%%%%%%%%%%%%%%%%%%%%%%%%%%

\begin{defn}
\label{DefTARd} A probability distribution in $I(\mathbb{R}^{d})$ is said to
be a \textit{type A distribution} on $\mathbb{R }^{d}$ if its L\'{e}vy measure
${\nu}$ belongs to $\mathfrak{R}(\mathcal{A}_{1})=\mathfrak{R}(\mathcal{A}%
_{2})$. There is no restriction on ${\Sigma}$ and ${\gamma}$ in its
L\'evy-Khintchine triplet. We denote by $A(\mathbb{R}^{d})$ the class of all
type $A$ distributions on $\mathbb{R}^{d}$.
\end{defn}

In the following, we study a probabilistic interpretation of type $A$
distributions, since they have been defined by an analytic way in terms of
their L\'{e}vy measures above. One probabilistic interpretation is a
representation by stochastic integral with respect to L\'{e}vy processes. The
problem is what the integrand is. We start with this section to answer this question.

Let $T\in(0,\infty)$ and let $f(t)$ be a square integrable function on
$[0,T]$. Then the stochastic integral $\int_{0}^{T}f(t)\mathrm{d}X_{t}^{(\mu
)}$ is defined for any $\mu\in I(\mathbb{R}^{d})$ and is infinitely divisible.
Define the stochastic integral mapping $\Phi_{f}$ based on $f$ as
\[
\Phi_{f}(\mu)=\mathcal{L}\left(  \int_{0}^{T}f(t)\mathrm{d}X_{t}^{(\mu
)}\right)  , \quad\mu\in I(\mathbb{R}^{d}).\label{stochint1}%
\]
If $\mu\in I(\mathbb{R}^{d})$ has the L\'evy-Khintchine triplet $(\Sigma
,\nu,\gamma)$, then $\widetilde{\mu}=\Phi_{f}(\mu)$ has the L\'evy-Khintchine
triplet $(\widetilde{\Sigma},\widetilde{\nu},\widetilde{\gamma})$ expressed
as
\begin{gather}
\widetilde{\Sigma}=\int_{0}^{T}f(t)^{2}\,\Sigma\,\mathrm{d}t,\label{stochint2}%
\\
\widetilde{\nu}(B)=\int_{0}^{T}\mathrm{d}t\int_{\mathbb{R}^{d}}1_{B}
(f(t)x)\,\nu(\mathrm{d}x),\qquad B\in\mathcal{B}(\mathbb{R}^{d}%
),\label{stochint3}\\
\widetilde{\gamma}=\int_{0}^{T}f(t)\mathrm{d}s\left(  \gamma+\int%
_{\mathbb{R}^{d}}x \left(  \frac{1}{1+|f(t)x|^{2}}-\frac{1}{1+|x|^{2}}\right)
\nu(\mathrm{d}x)\right)  . \label{stochint4}%
\end{gather}
(See Proposition 2.17 and Corollary 2.19 of \cite{Sa06a} and Proposition 2.6
of \cite{Sa06b}.)

Let us characterize the class $A(\mathbb{R}^{d})$ as the range of a stochastic
integral mapping.

\begin{thm}
\label{StIntRep} Let
\begin{equation}
\Phi_{\cos}(\mu)=\mathcal{L}\left(  \int_{0}^{1}\cos(2^{-1}\pi t)
\mathrm{d}X_{t}^{(\mu)}\right)  ,\quad\mu\in I(\mathbb{R}^{d}).
\label{StIntRep1}%
\end{equation}
Then $\Phi_{\cos}$ is a one-to-one mapping and
\begin{equation}
A(\mathbb{R}^{d})=\Phi_{\cos}(I(\mathbb{R}^{d})). \label{StIntRep2}%
\end{equation}

\end{thm}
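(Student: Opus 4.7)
The plan is to compute the Lévy--Khintchine triplet of $\Phi_{\cos}(\mu)$ explicitly from formulas \eqref{stochint2}--\eqref{stochint4} and match it, component by component, against the data defining $A(\mathbb{R}^d)$.

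First, since $f(t)=\cos(2^{-1}\pi t)$ is bounded and measurable on $[0,1]$, it is square-integrable, so $\Phi_{\cos}(\mu)$ is well-defined for every $\mu\in I(\mathbb{R}^d)$. Let $(\Sigma,\nu,\gamma)$ be the triplet of $\mu$ and $(\widetilde{\Sigma},\widetilde{\nu},\widetilde{\gamma})$ that of $\widetilde{\mu}=\Phi_{\cos}(\mu)$. From \eqref{stochint2},
$$\widetilde{\Sigma}=\Sigma\int_0^1\cos^2(2^{-1}\pi t)\,\mathrm{d}t=\tfrac12\Sigma,$$
so the Gaussian part yields a bijection between symmetric nonnegative-definite matrices.

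The key step is the identification of $\widetilde\nu$ with $\mcal{A}_2(\nu)$. Apply the change of variable $u=\cos(2^{-1}\pi t)$, noting $\mathrm{d}t=-(2/\pi)(1-u^2)^{-1/2}\mathrm{d}u$ and the bounds $t=0,1$ correspond to $u=1,0$. Formula \eqref{stochint3} then becomes
$$\widetilde{\nu}(B)=\int_0^1\mathrm{d}t\int_{\mathbb{R}^d}1_B(\cos(2^{-1}\pi t)x)\,\nu(\mathrm{d}x)=\int_0^1\nu(u^{-1}B)\,\tfrac{2}{\pi}(1-u^2)^{-1/2}\mathrm{d}u,$$
which by Theorem \ref{UpsIdent} (with $k=2$) or equivalently Corollary \ref{CorUpsA2} is exactly $\mcal{A}_2(\nu)(B)$. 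Since $\mfr{D}(\mcal{A}_2)=\mfr{M}_L^2(\mathbb{R}^d)=\mfr{M}_L(\mathbb{R}^d)$ by Theorem \ref{DomArcTrn}, every Lévy measure $\nu$ of some $\mu\in I(\mathbb{R}^d)$ is an admissible input; moreover by Theorem \ref{t2} the map $\nu\mapsto\mcal{A}_2(\nu)$ is injective with range $\mfr{R}(\mcal{A}_2)=\mfr{R}(\mcal{A}_1)$. Thus as $\mu$ varies over $I(\mathbb{R}^d)$, the Lévy measure $\widetilde\nu$ of $\Phi_{\cos}(\mu)$ varies bijectively over $\mfr{R}(\mcal{A}_2)$, which is exactly the set of Lévy measures allowed for $A(\mathbb{R}^d)$ by Definition \ref{DefTARd}.

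Finally, the drift formula \eqref{stochint4} gives
$$\widetilde{\gamma}=\Bigl(\int_0^1\cos(2^{-1}\pi t)\mathrm{d}t\Bigr)\gamma+R(\nu)=\tfrac{2}{\pi}\gamma+R(\nu),$$
where $R(\nu)$ denotes the double integral on the right of \eqref{stochint4}, which depends only on $\nu$. Since $2/\pi\neq 0$, once $\nu$ and $\widetilde\gamma$ are fixed, $\gamma$ is uniquely recovered; conversely any $\widetilde\gamma\in\mathbb{R}^d$ can be hit by choosing $\gamma=(\pi/2)(\widetilde\gamma-R(\nu))$. Combining the three components, $\Phi_{\cos}$ is injective and surjective onto $A(\mathbb{R}^d)$, establishing \eqref{StIntRep2}. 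The single nontrivial obstacle is the change-of-variable computation linking $\cos(2^{-1}\pi t)\,\mathrm{d}t$ to the arcsine density $a_1(u;1)\mathrm{d}u$; everything else is routine bookkeeping on the triplet and an appeal to the one-to-one and range results for $\mcal{A}_2$ proved earlier.
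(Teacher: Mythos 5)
Your proposal is correct and follows essentially the same route as the paper: the change of variable $u=\cos(2^{-1}\pi t)$ identifying the Lévy-measure transform of $\Phi_{\cos}$ with $\mathcal{A}_2$ (via Corollary \ref{CorUpsA2}), the fact that $\mathfrak{D}(\mathcal{A}_2)=\mathfrak{M}_L(\mathbb{R}^d)$ with range $\mathfrak{R}(\mathcal{A}_2)$, injectivity from Theorem \ref{t2}, and the observation that $\Sigma$ and $\gamma$ are freely adjustable and recoverable from $\widetilde{\Sigma}$, $\widetilde{\gamma}$ and $\nu$. Your explicit evaluation of the Gaussian and drift coefficients ($1/2$ and $2/\pi$) is a minor refinement of the paper's more terse "choose $\Sigma$ and $\gamma$" step, but the argument is the same.
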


\noindent\textit{Proof.} First let us show that $A(\mathbb{R}^{d})\subset
\Phi_{\cos}(I(\mathbb{R}^{d}))$. Let $\widetilde{\mu}\in A(\mathbb{R}^{d})$
with the L\'evy-Khintchine triplet $(\widetilde{\Sigma},\widetilde{\nu
},\widetilde{\gamma})$. Then $\widetilde{\nu}\in\mathfrak{R}(\mathcal{A}_{2})$
and hence, by Corollary \ref{CorUpsA2},
\[
\widetilde{\nu}(B)=\int_{0}^{1}\nu(u^{-1}B)2\pi^{-1}(1-u^{2})^{-1/2}%
\mathrm{d}u,\quad B\in\mathcal{B}(\mathbb{R}^{d}) \label{StIntRep3}%
\]
with some $\nu\in\mathfrak{M}_{L}^{2}(\mathbb{R}^{d})$. Let $s=g(u)=\int%
_{u}^{1}2\pi^{-1}(1-v^{2})^{-1/2}\mathrm{d}v=$ $2\pi^{-1}\arccos(u)$ for
$0<u<1$. Then $u=\cos(2^{-1}\pi t)$ for $0<t<1$. Thus
\[
\widetilde{\nu}(B)=-\int_{0}^{1}\mathrm{d}g(u)\int_{\mathbb{R}^{d}}
1_{B}(ux)\nu(\mathrm{d}x)=\int_{0}^{1}\mathrm{d}t\int_{\mathbb{R}^{d}}
1_{B}(x\cos(2^{-1}\pi t))\nu(\mathrm{d}x).
\]
That is, \eqref{stochint3} is satisfied with $T=1$ and $f(t)=\cos(2^{-1}\pi
t)$. Using $\nu$, we can find $\Sigma$ and $\gamma$ satisfying
\eqref{stochint2} and \eqref{stochint4}. Let $\mu$ be the distribution in
$I(\mathbb{R}^{d})$ with the L\'evy-Khintchine triplet $(\Sigma,\nu,\gamma)$.
Then $\widetilde{\mu}=\Phi_{\cos}(\mu)$. Hence $A(\mathbb{R}^{d})\subset
\Phi_{\cos}(I(\mathbb{R}^{d}))$.

Conversely, suppose that $\widetilde{\mu}\in\Phi_{\cos}(I(\mathbb{R}^{d}))$.
Then $\widetilde{\mu}=\Phi_{\cos}(\mu)$ for some $\mu\in I(\mathbb{R}^{d})$.
The L\'{e}vy-Khintchine triplets $(\widetilde{\Sigma},\widetilde{\nu
},\widetilde{\gamma})$ and $(\Sigma,\nu,\gamma)$ of $\widetilde{\mu}$ and
$\mu$ are related by \eqref{stochint2}---\eqref{stochint4} with $T=1$ and
$f(s)=\cos(2^{-1}\pi s)$. Then a similar calculus shows that \eqref{StIntRep3}
holds. Hence $\widetilde{\nu}\in\mathfrak{R}(\mathcal{A}_{2})$ and
$\widetilde{\mu}\in A(\mathbb{R}^{d})$, showing that $\Phi_{\cos}%
(I(\mathbb{R}^{d}))\subset A(\mathbb{R}^{d})$.

The mapping $\Phi_{\cos}$ is one-to-one, since $\nu$ is determined by
$\widetilde{\nu}$ (Theorem \ref{t2} with $k=2$) and $\Sigma$ and $\gamma$ are
determined by $\widetilde{\Sigma}$, $\widetilde{\gamma}$, and $\nu$. \qed

\vskip 5mm
%%%%%%%%%%%%%%%%%%%%%%%%%%%%%%%%%%%%%%%%%%%%%%%%%%%%%%%%%%%%%

\subsection{$\Upsilon^{0}$-transformation}

%%%%%%%%%%%%%%%%%%%%%%%%%%%%%%%%%%%%%%%%%%%%%%%%%%%%%%%%%%%%%

For later use, we introduce a transformation $\Upsilon^{0}$. Define
\[
\label{Ups0def1}\Upsilon^{0}(\nu)(B)=\int_{0}^{\infty}\nu(u^{-1}%
B)\mathrm{e}^{-u}\mathrm{d}u, \quad B\in\mathcal{B}(\mathbb{R}^{d}).
\]
Let $\mathfrak{M}_{L}^{B}(\mathbb{R }^{d})$ be the class of L\'evy measures of
the Goldie-Steutel-Bondesson class $B(\mathbb{R }^{d})$. In \cite{BNMS06}, it
is shown that $\Upsilon^{0}(\mathfrak{M}_{L}(\mathbb{R }^{d}))=\mathfrak{M}%
_{L}^{B}(\mathbb{R }^{d})$. This is the transformation of L\'evy measures
associated with the stochastic integral mapping $\Upsilon$ from $I(\mathbb{R}%
^{d})$ into $I(\mathbb{R}^{d})$ and it is known that $\Upsilon(I(\mathbb{R}%
^{d}))=B(\mathbb{R}^{d})$ (see \cite{BNMS06}). Both $\Upsilon^{0}$ and
$\Upsilon$ are one-to-one. For $\nu\in\mathfrak{M}_{L}(\mathbb{R}^{d})$ with a
polar decomposition $(\lambda,\nu_{\xi})$, we have the expression
\begin{equation}
\label{Ups0def2}\Upsilon^{0}(\nu)(B)=\int_{\mathbb{S}} \lambda(\mathrm{d}%
\xi)\int_{0}^{\infty} 1_{B}(r\xi)\Upsilon^{0}(\nu_{\xi})(\mathrm{d}r), \quad
B\in\mathcal{B}(\mathbb{R}^{d}),
\end{equation}
where $\Upsilon^{0}$ in the right-hand side acts on $\mathfrak{M}_{L}%
^{2}((0,\infty))$.

\begin{prop}
\label{Ups0} Let $\nu\in\mathfrak{M}_{L}(\mathbb{R}^{d})$. Then $\Upsilon
^{0}(\nu)\in\mathfrak{M}_{L}^{1}(\mathbb{R}^{d})$ if and only if $\nu
\in\mathfrak{M}_{L}^{1}(\mathbb{R}^{d})$.
\end{prop}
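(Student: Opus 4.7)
The plan is to show that the total mass integrals
$\int_{\mathbb{R}^d}(1\wedge|y|)\,\Upsilon^0(\nu)(\mathrm{d}y)$ and $\int_{\mathbb{R}^d}(1\wedge|x|)\,\nu(\mathrm{d}x)$ are comparable up to universal positive constants, so that finiteness of one is equivalent to finiteness of the other.

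First I would use Fubini's theorem on the defining formula for $\Upsilon^0$ to write, for any nonnegative measurable $f$ on $\mathbb{R}^d$,
\[
\int_{\mathbb{R}^d} f(y)\,\Upsilon^0(\nu)(\mathrm{d}y) = \int_0^\infty \mathrm{e}^{-u}\,\mathrm{d}u\int_{\mathbb{R}^d} f(ux)\,\nu(\mathrm{d}x).
\]
Taking $f(y)=1\wedge|y|$ and interchanging the two integrations gives
\[
\int_{\mathbb{R}^d}(1\wedge|y|)\,\Upsilon^0(\nu)(\mathrm{d}y) = \int_{\mathbb{R}^d} \nu(\mathrm{d}x)\, I(|x|),\qquad I(r):=\int_0^\infty \mathrm{e}^{-u}(1\wedge(ur))\,\mathrm{d}u.
\]

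The key step is a two-sided comparison $c_1(1\wedge r)\le I(r)\le c_2(1\wedge r)$ with constants independent of $r>0$. The upper bound $I(r)\le r$ follows from $1\wedge(ur)\le ur$ and $\int_0^\infty u\mathrm{e}^{-u}\mathrm{d}u=1$, while $I(r)\le 1$ is trivial; hence $I(r)\le 1\wedge r$ (with $c_2=1$ up to constants) actually needs a bit more care, so instead I would argue $I(r)\le 2(1\wedge r)$ by splitting at $u=1/r$ when $r\ge 1$. For the lower bound, if $r\le 1$ then $ur\le 1$ for all $u\in[0,1]$, so $I(r)\ge\int_0^1 ur\mathrm{e}^{-u}\mathrm{d}u = (1-2\mathrm{e}^{-1})r$; if $r\ge 1$, monotonicity of $I$ gives $I(r)\ge I(1)=1-\mathrm{e}^{-1}$. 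Combining these yields the claimed two-sided bound, from which
\[
c_1\int_{\mathbb{R}^d}(1\wedge|x|)\,\nu(\mathrm{d}x)\le \int_{\mathbb{R}^d}(1\wedge|y|)\,\Upsilon^0(\nu)(\mathrm{d}y)\le c_2\int_{\mathbb{R}^d}(1\wedge|x|)\,\nu(\mathrm{d}x),
\]
and the equivalence follows, with the observation that $\Upsilon^0(\nu)\in\mathfrak{M}_L(\mathbb{R}^d)$ is already known from the result of \cite{BNMS06} cited above so only the $1$-integrability at the origin is at issue.

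I expect no serious obstacle; the content is the elementary asymptotics $I(r)\asymp 1\wedge r$, whose proof is a one-line splitting argument at $u=1/r$. The only point requiring a touch of care is justifying the Fubini interchange, but since every integrand is nonnegative and measurable, Tonelli applies immediately.
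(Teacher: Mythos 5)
Your proposal is correct and follows essentially the same route as the paper: both apply Tonelli to reduce the question to a two-sided elementary bound on the inner integral $\int_0^\infty \mathrm{e}^{-u}(1\wedge(u r))\,\mathrm{d}u$ (the paper phrases it as bounds on $\int_0^{1/|x|}u\mathrm{e}^{-u}\,\mathrm{d}u$ after restricting to $|x|\le 1$ and absorbing the tail $\int_{|x|>1}2^{-1}|x|^{-1}\nu(\mathrm{d}x)$ via the L\'evy-measure property). Incidentally, your upper bound needs no extra care: $I(r)\le r$ and $I(r)\le 1$ already give $I(r)\le 1\wedge r$.
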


\noindent\textit{Proof.} Notice that
\begin{align*}
&  \int_{|x|\leq1} |x|\Upsilon^{0}(\nu)(\mathrm{d}x)=\int_{0}^{\infty}
\mathrm{e}^{-u} \mathrm{d}u\int_{|ux|\leq1} |ux|\nu(\mathrm{d}x)\\
&  \qquad=\int_{0}^{\infty} u\mathrm{e}^{-u} \mathrm{d}u\int_{|x|\leq1/u}
|x|\nu(\mathrm{d}x)=\int_{\mathbb{R}^{d}}|x|\nu(\mathrm{d}x) \int_{0}%
^{1/|x|}u\mathrm{e}^{-u} \mathrm{d}u\\
&  \qquad%
\begin{cases}
\leq\int_{|x|\leq1}|x|\nu(\mathrm{d}x) \int_{0}^{\infty} u\mathrm{e}^{-u}
\mathrm{d}u +\int_{|x|>1}2^{-1}|x|^{-1}\nu(\mathrm{d}x),\\
\geq\int_{|x|\leq1}|x|\nu(\mathrm{d}x)\int_{0}^{1} u\mathrm{e}^{-u}
\mathrm{d}u,
\end{cases}
\end{align*}
to see the equivalence. \qed

\vskip 5mm
%%%%%%%%%%%%%%%%%%%%%%%%%%%%%%%%%%%%%%%%%%%%%%%%%%%%%%%%%%%%%

\subsection{A representation of completely monotone functions}

%%%%%%%%%%%%%%%%%%%%%%%%%%%%%%%%%%%%%%%%%%%%%%%%%%%%%%%%%%%%%%

In \cite{MS08}, the class of generalized type $G$ distributions on
$\mathbb{R}^{d}$, denoted by $G(\mathbb{R}^{d})$, is defined as follows.
$\mu\in G(\mathbb{R}^{d})$ if and only if the radial component $\nu_{\xi}$ of
the L\'{e}vy measure of $\mu$ satisfies $\nu_{\xi}(\mathrm{d}r)=g_{\xi}%
(r^{2})\mathrm{d}r$, where $g_{\xi}(u)$ is a completely monotone function on
$(0,\infty)$. $\mathfrak{M}_{L}^{G}(\mathbb{R}^{d})$ denotes the class of all
L\'{e}vy measures of $\mu\in G(\mathbb{R}^{d})$. We use the following result
when dealing with $G(\mathbb{R}^{d})$. It is a result on the arcsine
transformation representation of a function $g(r^{2})$ when $g$ is completely
monotone on $(0,\infty)$.

\begin{prop}
\label{Repgr2} Let $g(u)$ be a real-valued measurable function on $(0,\infty
)$. Then the following three conditions are equivalent. \newline\textrm{$(a)$}
The function $g(u)$ is completely monotone on $(0,\infty)$ and satisfies
\begin{equation}
\label{Repgr2-1}\int_{0}^{\infty}(1\land r^{2}) g(r^{2})\mathrm{d}r<\infty.
\end{equation}
\textrm{$(b)$} There exists a completely monotone function $h(s)$ on
$(0,\infty)$ satisfying
\begin{equation}
\label{Repgr2-2}\int_{0}^{\infty}(1\land s) h(s)\mathrm{d}s<\infty
\end{equation}
such that
\[
\label{Repgr2-3}g(r^{2})=\int_{0}^{\infty} a_{1}(r;s)h(s)\mathrm{d}s,\quad
r>0.
\]
\textrm{$(c)$} There exists a measure $\rho$ on $(0,\infty)$ satisfying
\[
\label{Repgr2-4}\int_{0}^{\infty}(1\land s) \rho(\mathrm{d}s)<\infty
\]
such that
\begin{equation}
\label{Repgr2-5}g(r^{2})=\mathsf{a}_{1}(\Upsilon^{0}(\rho))(r),\quad r>0.
\end{equation}

\end{prop}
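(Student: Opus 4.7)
The natural strategy is to exploit Bernstein's theorem and reduce everything to a Laplace-transform identity on exponentials. The key identity is
\[
\int_0^\infty a_1(r;s)\mathrm{e}^{-sv}\mathrm{d}s \;=\; \int_{r^2}^\infty \tfrac{2}{\pi}(s-r^2)^{-1/2}\mathrm{e}^{-sv}\mathrm{d}s \;=\; \tfrac{2}{\sqrt{\pi}}\,v^{-1/2}\mathrm{e}^{-r^2 v},
\]
obtained by the substitution $s-r^2=t$ and $\int_0^\infty t^{-1/2}\mathrm{e}^{-tv}\mathrm{d}t=\sqrt{\pi/v}$. This turns the arcsine integral operator acting on CM functions into a multiplicative rescaling of the corresponding Bernstein measures: if $g(u)=\int\mathrm{e}^{-uv}\mu(\mathrm{d}v)$ and $h(s)=\int\mathrm{e}^{-sv}\sigma(\mathrm{d}v)$, then the representation $g(r^2)=\int a_1(r;s)h(s)\mathrm{d}s$ is equivalent, via Fubini, to $\mu(\mathrm{d}v)=\tfrac{2}{\sqrt{\pi}}v^{-1/2}\sigma(\mathrm{d}v)$, or $\sigma(\mathrm{d}v)=\tfrac{\sqrt{\pi}}{2}v^{1/2}\mu(\mathrm{d}v)$. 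Since positive multiples of Laplace transforms of positive measures are again CM, this algebraic correspondence will essentially give (a)$\Leftrightarrow$(b) once the integrability conditions are matched.

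I would first dispose of (b)$\Leftrightarrow$(c). Directly from the definition and the substitution $s=uw$, one computes that $\Upsilon^{0}(\rho)(\mathrm{d}s)=\bigl(\int_0^\infty w^{-1}\mathrm{e}^{-s/w}\rho(\mathrm{d}w)\bigr)\mathrm{d}s$; pushing $\rho$ forward under $w\mapsto 1/w$ writes this density as $\int v\,\mathrm{e}^{-sv}\tilde\rho(\mathrm{d}v)$, visibly CM. Conversely, Bernstein gives $h(s)=\int\mathrm{e}^{-sv}\sigma(\mathrm{d}v)$ for any CM $h$; the atom of $\sigma$ at $0$ must vanish because otherwise $h$ would be bounded below by a positive constant and $\int(1\wedge s)h(s)\mathrm{d}s$ would diverge. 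Then $\tilde\rho(\mathrm{d}v):=v^{-1}\sigma(\mathrm{d}v)$ pulled back to $\rho$ realises $h$ as the density of $\Upsilon^{0}(\rho)$, and the integrability equivalence $\int(1\wedge s)h(s)\mathrm{d}s<\infty\iff\int(1\wedge s)\rho(\mathrm{d}s)<\infty$ is precisely Proposition \ref{Ups0} applied on $(0,\infty)$.

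For (b)$\Rightarrow$(a), I would take $h(s)=\int\mathrm{e}^{-sv}\sigma(\mathrm{d}v)$, apply the key identity by Fubini to obtain $g(u)=\int\mathrm{e}^{-uv}\mu(\mathrm{d}v)$ with $\mu=\tfrac{2}{\sqrt{\pi}}v^{-1/2}\sigma$, so that $g$ is automatically CM, and then verify the integrability of $(1\wedge r^2)g(r^2)$ from that of $(1\wedge s)h(s)$ by a straightforward second Fubini. For (a)$\Rightarrow$(b), Bernstein writes $g(u)=\int_{[0,\infty)}\mathrm{e}^{-uv}\mu(\mathrm{d}v)$; the atom of $\mu$ at $0$ is forbidden by (a) because it would force $\int_1^\infty(1\wedge r^2)g(r^2)\mathrm{d}r=\infty$. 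I would then \emph{define} $\sigma(\mathrm{d}v):=\tfrac{\sqrt{\pi}}{2}v^{1/2}\mu(\mathrm{d}v)$ and $h(s):=\int\mathrm{e}^{-sv}\sigma(\mathrm{d}v)$, and reverse the key identity to get $g(r^2)=\int a_1(r;s)h(s)\mathrm{d}s$.

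The main obstacle is the integrability bookkeeping for (a)$\Rightarrow$(b): one must show that $\sigma$ defines a genuine $\sigma$-finite measure whose Laplace transform converges on $(0,\infty)$, and that condition (a) translates into condition (b). The heart of this is Fubini together with the asymptotic estimates
\[
\int_0^\infty(1\wedge r^2)\mathrm{e}^{-r^2v}\mathrm{d}r \;\asymp\; v^{-1/2}(1\wedge v^{-1}), \qquad \int_0^\infty(1\wedge s)\mathrm{e}^{-sv}\mathrm{d}s \;\asymp\; v^{-1}(1\wedge v^{-1}),
\]
checked by splitting at the natural cutoffs. Both (a) and (b) reduce to the common condition $\int_0^\infty(v^{-1/2}\wedge v^{-3/2})\mu(\mathrm{d}v)<\infty$, confirming the equivalence. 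Finiteness of $h(s)$ for every $s>0$ then follows from $\mu((0,1])<\infty$ (which the condition implies) and the exponential decay $\sup_{v\geq1}v^{2}\mathrm{e}^{-sv}<\infty$.
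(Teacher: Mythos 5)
Your argument is correct and is built on the same engine as the paper's proof: your key identity $\int_0^\infty a_1(r;s)\mathrm{e}^{-sv}\,\mathrm{d}s=2\pi^{-1/2}v^{-1/2}\mathrm{e}^{-r^2v}$ is exactly the paper's Gaussian--arcsine--exponential relation \eqref{RGausAT}, and your prescription $\sigma(\mathrm{d}v)=2^{-1}\pi^{1/2}v^{1/2}\mu(\mathrm{d}v)$ is precisely the paper's definition \eqref{Repgr2-7} of $h$, with the atom of the Bernstein measure at $0$ excluded by the same observation. Where you genuinely diverge is in the supporting steps. For (b)$\Leftrightarrow$(c) the paper simply cites Theorem A of \cite{BNMS06} (a completely monotone density on $(0,\infty)$ is exactly the density of $\Upsilon^{0}(\rho)$ for a unique $\rho$), whereas you rederive this correspondence via the substitution $w\mapsto 1/w$; that is fine, but note that Proposition \ref{Ups0} as stated assumes $\rho\in\mathfrak{M}_{L}$, which in the direction (b)$\Rightarrow$(c) you do not yet know --- what you really need is the two-sided estimate inside its proof, which holds for an arbitrary measure, so you should say so rather than cite the proposition verbatim. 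For the integrability bookkeeping in (a)$\Leftrightarrow$(b), the paper sidesteps your explicit asymptotics $\int_0^\infty(1\wedge r^2)\mathrm{e}^{-r^2v}\mathrm{d}r\asymp v^{-1/2}\wedge v^{-3/2}$ by noting that $g(r^2)\,\mathrm{d}r$ is $\mathcal{A}_1$ applied to $h(s)\,\mathrm{d}s$ and invoking Theorem \ref{DomArcTrn} with $d=1$, which already says that $\mathcal{A}_1(\nu)$ is a L\'evy measure if and only if $\int(1\wedge s)\nu(\mathrm{d}s)<\infty$. Your hands-on reduction of both (a) and (b) to $\int(v^{-1/2}\wedge v^{-3/2})\mu(\mathrm{d}v)<\infty$ is correct and has the merit of being self-contained, at the cost of redoing work the domain theorem already encapsulates.
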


\noindent\textit{Proof.} (a)\ $\Rightarrow$\ (b): From Bernstein's theorem,
there exists a measure $Q$ on $[0,\infty)$ such that
\begin{equation}
g(u)=\int_{[0,\infty)}\mathrm{e}^{-uv}Q(\mathrm{d}v),\quad u>0.
\label{Repgr2-6}%
\end{equation}
It follows from \eqref{Repgr2-1} that $Q(\{0\})=0$, since $Q(\{0\})=\lim
_{u\rightarrow\infty}g(u)$. We need the fact that the one-dimensional Gaussian
density $\varphi(x;t)$ of mean 0 and variance $t$ is the arcsine transform of
the exponential distribution with mean $t>0$. More precisely,
\begin{equation}
\varphi(x;t)={(2\pi t)}^{-1/2}\mathrm{e}^{-x^{2}/(2t)}={t}^{-1} \int%
_{0}^{\infty}\mathrm{e}^{-s/t}a(x;s)\mathrm{d}s,\;t>0,\;x\in\mathbb{R}.
\label{RGausAT}%
\end{equation}
This is the well-known Box-Muller method to generate normal random variables.
Using \eqref{RGausAT}, we have
\begin{align*}
g(r^{2})  &  =\int_{(0,\infty)}\mathrm{e}^{-r^{2}v}Q(\mathrm{d}v)\\
&  =\int_{(0,\infty)}v^{1/2}Q(\mathrm{d}v)\int_{r^{2}/2}^{\infty}
\mathrm{e}^{-2sv}2\pi^{-1/2}(2s-r^{2})^{-1/2}\mathrm{d}s\\
&  =\int_{(0,\infty)}v^{1/2}Q(\mathrm{d}v)\int_{r^{2}}^{\infty} \mathrm{e}%
^{-sv}\pi^{-1/2}(s-r^{2})^{-1/2}\mathrm{d}s.\\
&  =\int_{r^{2}}^{\infty}\pi^{-1/2}(s-r^{2})^{-1/2}\mathrm{d}s \int%
_{(0,\infty)}\mathrm{e}^{-sv}v^{1/2}Q(\mathrm{d}v)\\
&  =\int_{0}^{\infty}a_{1}(r;s)h(s)\mathrm{d}s,
\end{align*}
where
\begin{equation}
h(s)=2^{-1}\pi^{1/2}\int_{(0,\infty)}\mathrm{e}^{-sv}v^{1/2}Q(\mathrm{d}v).
\label{Repgr2-7}%
\end{equation}
Applying Theorem \ref{DomArcTrn} for $d=1$, we see \eqref{Repgr2-2} from \eqref{Repgr2-1}.

(b)\;$\Rightarrow$\;(c): Since $h(s)$ is completely monotone satisfying
\eqref{Repgr2-2}, there is $\rho\in\mathfrak{M}_{L}^{2}(\mathbb{R})$ such that
$h(s)\mathrm{d}s= \Upsilon^{0}(\rho)$ (see Theorem A of \cite{BNMS06}). Since
$\Upsilon^{0}(\rho)$ is concentrated on $(0,\infty)$, $\rho$ is concentrated
on $(0,\infty)$. Using Proposition \ref{Ups0}, we see that $\int%
_{(0,1]}s\,\rho(\mathrm{d}s)<\infty$.

(c)\ $\Rightarrow$\ (a): It follows from Proposition \ref{Ups0} that
$\int_{(0,1]}s\, \Upsilon^{0}(\rho)(\mathrm{d}s)<\infty$. Hence it follows
from \eqref{Repgr2-5} that $g(r^{2})$ satisfies \eqref{Repgr2-1} (use Theorem
\ref{DomArcTrn} for $d=1$). Finally let us prove that $g(u)$ is completely
monotone. There is a completely monotone function $h(s)$ such that
$\Upsilon^{0}(\rho)(\mathrm{d}s)=h(s)\mathrm{d}s$ (see Theorem A of
\cite{BNMS06} again). Hence from Bernstein's theorem we can find a measure $R
$ on $[0,\infty)$ such that
\[
h(s)=\int_{[0,\infty)}\mathrm{e}^{-sv}R(\mathrm{d}v),\quad s>0.
\]
We have $R(\{0\})=0$ since $\int_{1}^{\infty}h(s)\mathrm{d}s<\infty.$ Thus
\begin{align*}
g(r^{2})  &  =\int_{0}^{\infty}a_{1}(r;s)h(s)\mathrm{d}s =\int_{r^{2}}%
^{\infty}2\pi^{-1}(s-r^{2})^{-1/2}\mathrm{d}s\int_{(0,\infty)} \mathrm{e}%
^{-sv}R(\mathrm{d}v)\\
&  =\int_{(0,\infty)}R(\mathrm{d}v)\int_{r^{2}}^{\infty}2\pi^{-1}
(s-r^{2})^{-1/2}\mathrm{e}^{-sv}\mathrm{d}s\\
&  =\int_{(0,\infty)}\mathrm{e}^{-r^{2}v}2\pi^{-1/2}v^{-1/2}R(\mathrm{d}v),
\end{align*}
where the last equality is from the same calculus as in the proof that
(a)\ $\Rightarrow$\ (b). Now we see that $g(u)$ is completely monotone. \qed

\vskip 5mm
%%%%%%%%%%%%%%%%%%%%%%%%%%%%%%%%%%%%%%%%%%%%%%%%%%%%%%%%%%%%%%%%%%%%%%%%%%%%%%%%%

\subsection{A representation of $G(\mathbb{R}^{d})$ in terms of $\mathcal{A}%
_{1}$}

%%%%%%%%%%%%%%%%%%%%%%%%%%%%%%%%%%%%%%%%%%%%%%%%%%%%%%%%%%%%%%%%%%%%%%%%%%%%%%%%%%

We now give an alternative representation for L\'{e}vy measures of
distributions in $G(\mathbb{R }^{d})$.

\begin{thm}
\label{GenTGRep} Let $\widetilde{\mu}$ be an infinitely divisible distribution
on $\mathbb{R}^{d}$ with the L\'evy-Khintchine triplet $(\widetilde{\Sigma
},\widetilde{\nu},\widetilde{\gamma})$. Then the following three conditions
are equivalent. \newline\textrm{$(a)$} \,\,\,$\widetilde{\mu}\in
G(\mathbb{R}^{d})$. \newline\textrm{$(b)$}\quad$\widetilde{\nu}=\mathcal{A}%
_{1}(\nu)$ with some $\nu\in\mathfrak{M}_{L}^{B}(\mathbb{R}^{d})\cap
\mathfrak{M}_{L}^{1} (\mathbb{R}^{d}) $. \newline\textrm{$(c)$}\quad
$\widetilde{\nu}=\mathcal{A}_{1}(\Upsilon^{0}(\rho))$ with some $\rho
\in\mathfrak{M}_{L}^{1}(\mathbb{R}^{d})$. \newline In condition \textrm{$(b)$}
or \textrm{$(c)$}, the representation of $\widetilde{\nu}$ by $\nu$ or $\rho$
is unique.
\end{thm}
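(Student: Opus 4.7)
The plan is to reduce the statement to a radial-component statement via the polar decomposition (Proposition \ref{PolDec}) and then apply the arcsine/completely-monotone dictionary of Proposition \ref{Repgr2}. Write polar decompositions $(\lambda,\nu_{\xi})$, $(\lambda,\rho_{\xi})$, and $(\lambda,\widetilde{\nu}_{\xi})$ for $\nu$, $\rho$, and $\widetilde{\nu}$ with a common spherical measure $\lambda$; this is legitimate by the uniqueness clause of Proposition \ref{PolDec}. The arcsine transformation $\mathcal{A}_{1}$ and the $\Upsilon^{0}$-transformation both respect this polar structure (see \eqref{Ups0def2} and the expression for $\mathcal{A}_{k}$ derived in the proof of Proposition \ref{Range2}), so each of (b) and (c) is equivalent to a family of one-dimensional identities that hold for $\lambda$-almost every $\xi$.

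For (a)\,$\Leftrightarrow$\,(b), observe that $\widetilde{\mu}\in G(\mathbb{R}^{d})$ means $\widetilde{\nu}_{\xi}(\mathrm{d}r)=g_{\xi}(r^{2})\mathrm{d}r$ with $g_{\xi}$ completely monotone, and of course $g_{\xi}(r^{2})$ automatically satisfies the integrability \eqref{Repgr2-1} because $\widetilde{\nu}$ is a L\'evy measure. Proposition \ref{Repgr2}, applied fibre-by-fibre, translates this into the existence of a completely monotone $h_{\xi}(s)$ with $\int_{0}^{\infty}(1\land s)h_{\xi}(s)\mathrm{d}s<\infty$ such that $g_{\xi}(r^{2})=\int_{0}^{\infty}a_{1}(r;s)h_{\xi}(s)\mathrm{d}s$. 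Assembling the measures $\nu_{\xi}(\mathrm{d}s)=h_{\xi}(s)\mathrm{d}s$ through $\lambda$ gives a $\nu$ whose radial components have completely monotone densities, i.e.\ $\nu\in\mathfrak{M}_{L}^{B}(\mathbb{R}^{d})$, and the integrability $\int(1\land s)h_{\xi}(s)\mathrm{d}s<\infty$ combined with Theorem \ref{DomArcTrn} (applied to the polar decomposition) gives $\nu\in\mathfrak{M}_{L}^{1}(\mathbb{R}^{d})$; conversely, the same proposition run backward yields (a) from (b). The only subtlety here, and what I view as the main obstacle, is verifying joint measurability of the constructed $h_{\xi}(s)$ in $(\xi,s)$, which has to be tracked carefully through Bernstein's theorem and the explicit formula \eqref{Repgr2-7} for $h$ in terms of the Bernstein measure $Q$; but measurable selection is routine because $Q_{\xi}$ can be obtained from $g_{\xi}$ by inversion of the Laplace transform.

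For (b)\,$\Leftrightarrow$\,(c), use the identification $\Upsilon^{0}(\mathfrak{M}_{L}(\mathbb{R}^{d}))=\mathfrak{M}_{L}^{B}(\mathbb{R}^{d})$ recalled in Section 3.2 together with Proposition \ref{Ups0}. Concretely, a L\'evy measure lies in $\mathfrak{M}_{L}^{B}(\mathbb{R}^{d})\cap\mathfrak{M}_{L}^{1}(\mathbb{R}^{d})$ iff it is of the form $\Upsilon^{0}(\rho)$ for some $\rho\in\mathfrak{M}_{L}(\mathbb{R}^{d})$ with $\Upsilon^{0}(\rho)\in\mathfrak{M}_{L}^{1}(\mathbb{R}^{d})$, and Proposition \ref{Ups0} identifies the latter constraint with $\rho\in\mathfrak{M}_{L}^{1}(\mathbb{R}^{d})$. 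Substituting $\nu=\Upsilon^{0}(\rho)$ in (b) produces (c), and conversely.

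For uniqueness, $\nu$ is recovered from $\widetilde{\nu}$ by Theorem \ref{t2} (the case $k=1$), and $\rho$ is then recovered from $\nu=\Upsilon^{0}(\rho)$ by the injectivity of $\Upsilon^{0}$ noted in Section 3.2. This completes the four implications and the uniqueness statement.
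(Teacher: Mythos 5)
Your proposal is correct and follows essentially the same route as the paper: polar decomposition with a common spherical measure, fibre-by-fibre application of Proposition \ref{Repgr2} (including the same treatment of joint measurability of $h_{\xi}(s)$ via the Bernstein measure $Q_{\xi}$ and formula \eqref{Repgr2-7}), the identification $\Upsilon^{0}(\mathfrak{M}_{L})=\mathfrak{M}_{L}^{B}$ together with Proposition \ref{Ups0} for (b)\,$\Leftrightarrow$\,(c), and Theorem \ref{t2} plus injectivity of $\Upsilon^{0}$ for uniqueness. The only cosmetic difference is that you prove (a)\,$\Leftrightarrow$\,(b) directly in both directions while the paper closes the cycle via (c)\,$\Rightarrow$\,(a); both are valid.
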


\noindent\textit{Proof.} (a)\;$\Rightarrow$\;(b): By definition of
$G(\mathbb{R}^{d})$, the L\'evy measure $\widetilde{\nu}$ of $\widetilde{\mu}$
has polar decomposition $(\lambda, g_{\xi}(r^{2})\mathrm{d}r)$ where $g_{\xi
}(u)$ is measurable in $(\xi,u)$ and completely monotone in $u>0$. Hence, by
Proposition \ref{Repgr2}, for each $\xi$ we can find a completely monotone
function $\ell_{\xi}(s)$ such that $\int_{0}^{\infty}(1\land s)\ell_{\xi
}(s)\mathrm{d}s <\infty$ and
\[
g_{\xi}(r^{2})=\int_{0}^{\infty} a_{1}(r;s)\ell_{\xi}(s)\mathrm{d}s,\quad
r>0.
\]
The measure $Q_{\xi}$ in the representation \eqref{Repgr2-6} of $g_{\xi}(u)$
has the property that $Q_{\xi}(E)$ is measurable in $\xi$ for every Borel set
$E$ in $[0,\infty)$ (see Remark 3.2 of \cite{BNMS06}). Hence, for any
nonnegative function $f(s,v)$ measurable in $(s,v)$, $\int_{[0,\infty)}
f(s,v)Q_{\xi}(\mathrm{d}v)$ is measurable in $(\xi,s)$. Hence the function
$h_{\xi}(s)$ defined as in \eqref{Repgr2-7} is measurable in $(\xi,s)$. Thus
we have
\[
\widetilde{\nu}(B)=\int_{\mathbb{S}}\lambda(\mathrm{d}\xi)\int_{0}^{\infty}
1_{B}(r\xi) \mathrm{d}r\int_{0}^{\infty} a_{1}(r;s)h_{\xi}(s)\mathrm{d}s.
\]
Now, an argument similar to the proof of Theorem \ref{DomArcTrn} shows that
\[
\int_{\mathbb{S}}\lambda(\mathrm{d}\xi)\int_{0}^{\infty}(1\land s)h_{\xi
}(s)\mathrm{d}s<\infty.
\]
Thus, letting $\nu$ denote the L\'evy measure with polar decomposition
$(\lambda,$ $h_{\xi}(s)\mathrm{d}s)$, we see that $\widetilde{\nu}%
=\mathcal{A}_{1}(\nu)$ and $\nu\in\mathfrak{M}_{L}^{B} (\mathbb{R}^{d}%
)\cap\mathfrak{M}_{L}^{1}(\mathbb{R}^{d})$.

(b)\;$\Rightarrow$\;(c): It follows from $\nu\in\mathfrak{M}_{L}%
^{B}(\mathbb{R}^{d})$ that $\nu=\Upsilon^{0}(\rho)$ for some unique $\rho
\in\mathfrak{M}_{L}^{2}(\mathbb{R}^{d})$ (Theorem A of \cite{BNMS06}). Since
$\nu\in\mathfrak{M}_{L}^{1}(\mathbb{R}^{d})$, we have $\rho\in\mathfrak{M}%
_{L}^{1}(\mathbb{R}^{d})$ from Proposition \ref{Ups0}.

(c)\ $\Rightarrow$\ (a): It follows from $\rho\in\mathfrak{M}_{L}%
^{1}(\mathbb{R}^{d})$ that $\Upsilon^{0}(\rho)\in\mathfrak{M}_{L}%
^{1}(\mathbb{R}^{d})$ (Proposition \ref{Ups0}). Let $(\lambda,\nu_{\xi})$ be
polar decomposition of $\nu=\Upsilon^{0}(\rho)$. Then $\nu_{\xi}
(\mathrm{d}s)=\ell_{\xi}(s)\mathrm{d}s$ where $\ell_{\xi}(s)$ is measurable in
$(\xi,s)$ and completely monotone in $s>0$. Define $g_{\xi}(u)$ by
\[
g_{\xi}(r^{2})=\int_{0}^{\infty}a_{1}(r;s)\ell_{\xi}(s)\mathrm{d}s.
\]
Then $g_{\xi}(u)$ is measurable in $(\xi,u)$. It follows from Proposition
\ref{Repgr2} that $g_{\xi}(u)$ is completely monotone in $u>0$. Hence
$\widetilde{\nu}\in\mathfrak{M}_{L}^{G}(\mathbb{R}^{d})$ and $\widetilde{\mu
}\in G(\mathbb{R}^{d})$. \qed

\vskip 5mm

%%%%%%%%%%%%%%%%%%%%%%%%%%%%%%%%%%%%%%%%%%%%%%%%%%%%%%%%%%%%%%%%%%%%%%%%

\subsection{$G(\mathbb{R}^{d})$ as image of $A(\mathbb{R}^{d})$ under a
stochastic integral mapping}

%%%%%%%%%%%%%%%%%%%%%%%%%%%%%%%%%%%%%%%%%%%%%%%%%%%%%%%%%%%%%%%%%%%%%%%%%%

Following \cite{MN08}, we define the transformation $\Upsilon_{\alpha,\beta
}(\nu)$ for $\alpha<2$ and $0<\beta\leq2$. For a measure $\nu$ on
$\mathbb{R}^{d}$ with $\nu(\{0\})=0$ define
\[
\Upsilon_{\alpha,\beta}(\nu)(B)=\int_{0}^{\infty}\nu(s^{-1}B)\beta
s^{-\alpha-1}\mathrm{e}^{-s^{\beta}}\mathrm{d}s,\quad B\in\mathcal{B}
(\mathbb{R}^{d}),\label{UpsMapalbe}%
\]
whenever the right-hand side gives a measure in $\mathfrak{M}_{L}%
(\mathbb{R}^{d})$. This definition is different from that of \cite{MN08} in
the constant factor $\beta$. A special case with $\beta=1$ coincides with the
transformation of L\'{e}vy measures in the stochastic integral mapping
$\Psi_{\alpha}$ studied by Sato \cite{Sa06b}. Of particular interest in this
work is the mapping $\Upsilon_{-2,2}$. Notice that $\Upsilon_{-1,1}%
=\Upsilon^{0}$.

\begin{prop}
\label{dom-2,2} $\Upsilon_{-2,2}(\nu)$ is definable if and only if $\nu
\in\mathfrak{M}_{L}(\mathbb{R}^{d})$. The mapping $\Upsilon_{-2,2}$ is one-to-one.
\end{prop}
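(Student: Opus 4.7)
The plan is to prove the two claims of Proposition~\ref{dom-2,2} by direct Fubini computations, using that $\Upsilon_{-2,2}$ is an Upsilon transformation with the finite dilation measure $\tau(\mathrm{d}s)=2s\,\mathrm{e}^{-s^{2}}\mathrm{d}s$ on $(0,\infty)$. The key observations are that substituting $u=s^{2}$ turns $\tau$ into the unit exponential density, and that the resulting kernel $\int_{0}^{\infty}(1\wedge u|x|^{2})\mathrm{e}^{-u}\mathrm{d}u$ is comparable to $1\wedge|x|^{2}$.

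For the definability claim, I would compute, via Fubini followed by the substitution $u=s^{2}$,
\[
\int_{\mathbb{R}^{d}}(1\wedge|x|^{2})\,\Upsilon_{-2,2}(\nu)(\mathrm{d}x)=\int_{\mathbb{R}^{d}}\nu(\mathrm{d}x)\int_{0}^{\infty}(1\wedge u|x|^{2})\mathrm{e}^{-u}\mathrm{d}u.
\]
Splitting the inner integral at $u=|x|^{-2}$ and treating the cases $|x|\leq 1$ and $|x|>1$ separately produces positive constants $c_{1},c_{2}$ with $c_{1}(1\wedge|x|^{2})\leq\int_{0}^{\infty}(1\wedge u|x|^{2})\mathrm{e}^{-u}\mathrm{d}u\leq c_{2}(1\wedge|x|^{2})$ for every $x\neq 0$. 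This gives the equivalence $\Upsilon_{-2,2}(\nu)\in\mathfrak{M}_{L}(\mathbb{R}^{d})$ iff $\nu\in\mathfrak{M}_{L}(\mathbb{R}^{d})$.

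For the one-to-one claim, I would pass to polar decomposition. If $\nu$ has polar decomposition $(\lambda,\nu_{\xi})$, then a Fubini computation in the style of the proof of Theorem~\ref{UpsIdent} shows that $\Upsilon_{-2,2}(\nu)$ has polar decomposition $(\lambda,\widetilde{\nu}_{\xi})$ with
\[
\widetilde{\nu}_{\xi}((0,r])=\int_{0}^{\infty}\bigl(1-\mathrm{e}^{-r^{2}/t^{2}}\bigr)\nu_{\xi}(\mathrm{d}t),\quad r>0.
\]
By the uniqueness part of Proposition~\ref{PolDec}, it suffices to show that on $(0,\infty)$ the map $\nu_{\xi}\mapsto\widetilde{\nu}_{\xi}$ is one-to-one up to a positive scalar. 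Setting $u=r^{2}$ and differentiating, the resulting density equals $\int_{0}^{\infty}\mathrm{e}^{-u/t^{2}}t^{-2}\nu_{\xi}(\mathrm{d}t)$, which is the Laplace transform in $u$ of the image of $\nu_{\xi}$ under $t\mapsto 1/t^{2}$ reweighted by $t^{-2}$; uniqueness of the Laplace transform then recovers $\nu_{\xi}$.

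The main technical obstacle I anticipate is the bookkeeping around the polar-decomposition equivalence of Proposition~\ref{PolDec}: one must track measurability of $\widetilde{\nu}_{\xi}$ in $\xi$ and carry the scalar ambiguity $c(\xi)$ through the Laplace-inversion step. A useful cross-check is the factorization $\Upsilon_{-2,2}(\nu)=(\Upsilon^{0}(\nu^{(2)}))^{(1/2)}$, visible from the same Fubini calculation, which exhibits $\Upsilon_{-2,2}$ as a composition of known one-to-one maps, at the cost of extending $\Upsilon^{0}$ beyond $\mathfrak{M}_{L}(\mathbb{R}^{d})$ since $\nu^{(2)}$ need not itself be a L\'evy measure.
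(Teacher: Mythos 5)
Your proposal is correct, and on the definability claim it is essentially the paper's own argument: the paper also applies Fubini to $\int(1\wedge|x|^{2})\,\Upsilon_{-2,2}(\nu)(\mathrm{d}x)$, splits the inner integral at $s=1/|x|$, and checks the asymptotics of $\int_{0}^{1/|x|}2s^{3}\mathrm{e}^{-s^{2}}\mathrm{d}s$ and $\int_{1/|x|}^{\infty}2s\mathrm{e}^{-s^{2}}\mathrm{d}s$ as $|x|\downarrow 0$ and $|x|\to\infty$; your substitution $u=s^{2}$ and the two-sided bound $c_{1}(1\wedge|x|^{2})\leq\int_{0}^{\infty}(1\wedge u|x|^{2})\mathrm{e}^{-u}\mathrm{d}u\leq c_{2}(1\wedge|x|^{2})$ is the same computation packaged slightly more cleanly. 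Where you genuinely diverge is the one-to-one claim: the paper gives no argument at all, deferring to ``a similar argument to the proof of Proposition 4.1 of Sato (2006b),'' whereas you supply a self-contained proof via the polar decomposition, the identity $\widetilde{\nu}_{\xi}((0,r])=\int_{0}^{\infty}(1-\mathrm{e}^{-r^{2}/t^{2}})\nu_{\xi}(\mathrm{d}t)$, and Laplace-transform uniqueness applied to the density $\int_{0}^{\infty}t^{-2}\mathrm{e}^{-u/t^{2}}\nu_{\xi}(\mathrm{d}t)$ after the change of variable $u=r^{2}$. This is the standard argument behind the cited reference and it works, provided you justify the differentiation (e.g.\ write $1-\mathrm{e}^{-u/t^{2}}=\int_{0}^{u}t^{-2}\mathrm{e}^{-v/t^{2}}\mathrm{d}v$ and apply Tonelli, noting the density is finite for $\lambda$-a.e.\ $\xi$ because $\nu_{\xi}((1,\infty))<\infty$) and carry the scalar $c(\xi)$ from Proposition~\ref{PolDec} through, exactly as in the paper's proof of Theorem~\ref{t2}. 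One small correction to your closing cross-check: if $\nu\in\mathfrak{M}_{L}(\mathbb{R}^{d})$ then $\nu^{(2)}\in\mathfrak{M}_{L}(\mathbb{R}^{d})$ automatically, since $\int(1\wedge|x|^{2})\nu^{(2)}(\mathrm{d}x)=\int(1\wedge|x|^{4})\nu(\mathrm{d}x)\leq\int(1\wedge|x|^{2})\nu(\mathrm{d}x)$; the need to extend $\Upsilon^{0}$ beyond $\mathfrak{M}_{L}(\mathbb{R}^{d})$ arises only in the ``only if'' direction of definability, where $\nu$ is not yet known to be a L\'evy measure.
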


\noindent\textit{Proof.} Let $\widetilde{\nu}(B)=\int_{0}^{\infty}\nu
(s^{-1}B)2s\mathrm{e}^{-s^{2}}\mathrm{d}s$. Then
\[
\int_{\mathbb{R}^{d}}f(x)\widetilde{\nu}(\mathrm{d}x)=\int_{0}^{\infty}
2s\mathrm{e}^{-s^{2}}\mathrm{d}s\int_{\mathbb{R}^{d}}f(sx)\nu(\mathrm{d}x)
\]
for all nonnegative measurable functions $f$. Hence
\begin{align*}
&  \int_{\mathbb{R}^{d}}(1\wedge|x|^{2})\widetilde{\nu}(\mathrm{d}x) =\int%
_{0}^{\infty}2s\mathrm{e}^{-s^{2}}\mathrm{d}s\int_{\mathbb{R}^{d}}
(1\wedge|sx|^{2})\nu(\mathrm{d}x)\\
&  \qquad=\int_{0}^{\infty}2s\mathrm{e}^{-s^{2}}\mathrm{d}s\left(
\int_{|x|\leq1/s}|sx|^{2}\nu(\mathrm{d}x)+\int_{|x|>1/s}\nu(\mathrm{d}%
x)\right) \\
&  \qquad=\int_{\mathbb{R}^{d}}|x|^{2}\nu(\mathrm{d}x)\int_{0}^{1/|x|}
2s^{3}\mathrm{e}^{-s^{2}}\mathrm{d}s+\int_{\mathbb{R}^{d}}\nu(\mathrm{d}%
x)\int_{1/|x|}^{\infty} 2s\mathrm{e}^{-s^{2}}\mathrm{d}s.
\end{align*}
Observe that $\int_{0}^{1/|x|}2s^{3}\mathrm{e}^{-s^{2}}\mathrm{d}s$ is
convergent as $|x|\downarrow0$ and $\sim2^{-1}|x|^{-4}$ as $|x|\rightarrow
\infty$ and $\int_{1/|x|}^{\infty}2s\mathrm{e}^{-s^{2}}\mathrm{d}s$ is
$\sim\mathrm{e}^{-1/|x|^{2}}$ as $|x|\downarrow0$ and convergent as
$|x|\rightarrow\infty$. Then we see that $\int_{\mathbb{R}^{d}}(1\wedge
|x|^{2})\widetilde{\nu}(\mathrm{d}x)$ is finite if and only if $\int%
_{\mathbb{R}^{d}}(1\wedge|x|^{2})\nu(\mathrm{d}x)$ is finite. To prove that
$\Upsilon_{-2,2}$ is one-to-one, make a similar argument to the proof of
Proposition 4.1 of \cite{Sa06b}. \qed

\vskip 3mm The following result is needed in showing the characterization of
$G(\mathbb{R }^{d})$ in terms of type $A$ distributions. However, it also
shows that $\mathcal{A}_{1}$ and $\Upsilon^{0}$ are not commutative, while
$\mathcal{A}_{2}$ and $\Upsilon^{0}$ are commutative, both being Upsilon
transformations with domain equal to $\mathfrak{M}_{L}(\mathbb{R}^{d})$.

\begin{thm}
\label{noncom} It holds that
\[
\label{noncom-1}\Upsilon_{-2,2}(\mathcal{A}_{1}(\rho)) =\mathcal{A}%
_{1}(\Upsilon^{0}(\rho))\quad\text{for }\rho\in\mathfrak{M}_{L}^{1}
(\mathbb{R}^{d}).
\]

\end{thm}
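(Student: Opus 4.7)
The plan is polar reduction followed by one Gaussian integral. Let $(\lambda,\rho_\xi)$ be a polar decomposition of $\rho\in\mathfrak{M}_L^1(\mathbb{R}^d)$. Since $\mathcal{A}_1$ acts purely radially, and both $\Upsilon^0$ and $\Upsilon_{-2,2}$ are dilational Upsilon transformations (so in particular preserve the spherical part $\lambda$, by \eqref{Ups0def2} and \eqref{GeUpsMap}), each side of the claimed equality admits a polar decomposition with spherical part $\lambda$. Thus it suffices to verify, for each $\xi$ and all $u>0$, the equality of the two radial densities at $u$.

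For the right-hand side, $\Upsilon^0(\rho_\xi)(\mathrm{d}v)$ has density $h_\xi(v)=\int_0^\infty s^{-1}\mathrm{e}^{-v/s}\rho_\xi(\mathrm{d}s)$; applying $\mathsf{a}_1$ and interchanging order of integration produces the radial density
\[
\int_0^\infty s^{-1}\rho_\xi(\mathrm{d}s)\int_{u^2}^\infty 2\pi^{-1}(v-u^2)^{-1/2}\mathrm{e}^{-v/s}\mathrm{d}v,
\]
and the inner integral, via the shift $w=v-u^2$ and $\Gamma(1/2)=\sqrt{\pi}$, equals $2\pi^{-1/2}s^{1/2}\mathrm{e}^{-u^2/s}$. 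So the right-hand radial density at $u$ is
\[
2\pi^{-1/2}\int_0^\infty s^{-1/2}\mathrm{e}^{-u^2/s}\rho_\xi(\mathrm{d}s),
\]
which is essentially a restatement of the Gaussian--arcsine identity \eqref{RGausAT}.

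For the left-hand side, $\mathcal{A}_1(\rho)$ has radial density $\mathsf{a}_1(\rho_\xi)(r)$, so plugging its polar form into \eqref{GeUpsMap} (with dilation measure $2t\mathrm{e}^{-t^2}\mathrm{d}t$) and making the change of variables $w=tr$ exhibits the radial density of $\Upsilon_{-2,2}(\mathcal{A}_1(\rho))$ at $u$ as
\[
\int_0^\infty\rho_\xi(\mathrm{d}s)\int_0^{s^{1/2}} 2ur^{-2}\mathrm{e}^{-u^2/r^2}\cdot 2\pi^{-1}(s-r^2)^{-1/2}\mathrm{d}r.
\]
Thus the proof collapses to the one-parameter identity
\[
\int_0^{s^{1/2}} 2ur^{-2}\mathrm{e}^{-u^2/r^2}(s-r^2)^{-1/2}\mathrm{d}r=\sqrt{\pi}\,s^{-1/2}\mathrm{e}^{-u^2/s},\qquad s,u>0,
\]
which I regard as the single genuine step. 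I would verify it by substituting $r=s^{1/2}\sin\theta$ and then $\phi=\cot\theta$, which converts the left-hand side into $2us^{-1}\mathrm{e}^{-u^2/s}\int_0^\infty\mathrm{e}^{-u^2\phi^2/s}\mathrm{d}\phi$; the classical evaluation $\int_0^\infty\mathrm{e}^{-a\phi^2}\mathrm{d}\phi=\tfrac{1}{2}\sqrt{\pi/a}$ then delivers the claimed value. Integrability for all Fubini swaps is automatic: $\rho\in\mathfrak{M}_L^1(\mathbb{R}^d)$ gives $\mathcal{A}_1(\rho)\in\mathfrak{M}_L(\mathbb{R}^d)$ by Theorem \ref{DomArcTrn}, Proposition \ref{dom-2,2} then places $\Upsilon_{-2,2}(\mathcal{A}_1(\rho))$ in $\mathfrak{M}_L(\mathbb{R}^d)$, and on the other side $\Upsilon^0$ preserves $\mathfrak{M}_L^1(\mathbb{R}^d)$ by Proposition \ref{Ups0}, so $\mathcal{A}_1(\Upsilon^0(\rho))$ is well-defined.
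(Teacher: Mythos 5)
Your proof is correct, and it shares the paper's skeleton (polar decomposition, reduction to radial densities, Fubini and a change of variables), but the finishing move is genuinely different. The paper quotes Theorem 2.6(ii) of \cite{MN08} to write the radial density of $\Upsilon_{-2,2}(\mathcal{A}_1(\rho))$ as $2r\int_0^\infty \mathrm{e}^{-r^2/s^2}s^{-2}\mathsf{a}_1(\rho_\xi)(s)\,\mathrm{d}s$ (which you instead derive by hand), substitutes $t=r^2/s^2$, and then uses the dilation covariance of the arcsine kernel, $t^{-1/2}a_1(t^{-1/2}r;s)=a_1(r;ts)$, to convert the left-hand side \emph{directly} into $\mathsf{a}_1(\Upsilon^0(\rho_\xi))(r)$ --- no Gaussian integral is ever evaluated and neither side is put in closed form. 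You instead compute both radial densities down to the explicit Gaussian mixture $2\pi^{-1/2}\int_0^\infty s^{-1/2}\mathrm{e}^{-u^2/s}\rho_\xi(\mathrm{d}s)$, one side via $\Gamma(1/2)$ and the other via your one-parameter identity, which I checked (the substitutions $r=s^{1/2}\sin\theta$, $\phi=\cot\theta$ do give $2us^{-1}\mathrm{e}^{-u^2/s}\cdot\tfrac12\sqrt{\pi s}/u=\sqrt{\pi}\,s^{-1/2}\mathrm{e}^{-u^2/s}$). Your route costs one extra explicit integral but produces the closed Gaussian-mixture form of both sides as a byproduct, making the link to \eqref{RGausAT} and the type-$G$ picture visible; the paper's route is slightly slicker and isolates the scaling identity of $a_1$ as the structural reason the two transformations intertwine. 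One minor point: your closing appeal to the well-definedness of the resulting L\'evy measures to license the interchanges is unnecessary --- every integrand involved is nonnegative, so Tonelli applies unconditionally.
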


\noindent\textit{Proof.} Suppose that $\rho\in\mathfrak{M}_{L}^{1}%
(\mathbb{R}^{d})$ with polar decomposition $(\lambda,\rho_{\xi})$. Let
$\nu=\mathcal{A}_{1}(\rho)$ and $\widetilde{\nu}=\Upsilon_{-2,2}(\nu)$. Then
$\nu$ has polar decomposition $(\lambda,\nu_{\xi})$ with $\nu_{\xi}%
(\mathrm{d}s)=\mathsf{a}_{1}(\rho_{\xi})(s)\mathrm{d}s$. From Theorem 2.6 (ii)
in \cite{MN08}, $\widetilde{\nu}$ has polar decomposition $(\lambda
,\widetilde{\nu}_{\xi})$ given by
\begin{equation}
\widetilde{\nu}_{\xi}(\mathrm{d}r)=rg_{\xi}(r^{2})dr \label{PrThImTA1}%
\end{equation}
with
\begin{equation}
g_{\xi}(r^{2})=2\int_{0}^{\infty}s^{-2}\mathrm{e}^{-r^{2}/s^{2}}\nu_{\xi
}(\mathrm{d}s). \label{PrThImTA2}%
\end{equation}
Using (\ref{PrThImTA1}) and (\ref{PrThImTA2}) we have
\begin{align*}
r{g}_{\xi}(r^{2})  &  =2r\int_{0}^{\infty}\mathrm{e}^{-r^{2}/s^{2}}
s^{-2}\mathsf{a}_{1}(\rho_{\xi})(s)\mathrm{d}s\\
&  =\int_{0}^{\infty}\mathrm{e}^{-t}t^{-1/2}\mathsf{a}_{1}(\rho_{\xi
})(t^{-1/2}r)\mathrm{d}t\\
&  =\int_{0}^{\infty}\mathrm{e}^{-t}t^{-1/2}\mathrm{d}t\int_{0}^{\infty}
a_{1}(t^{-1/2}r;s)\rho_{\xi}(\mathrm{d}s)\\
&  =\int_{0}^{\infty}\mathrm{e}^{-t}\mathrm{d}t\int_{0}^{\infty}
a_{1}(r;ts)\rho_{\xi}(\mathrm{d}s)\\
&  =\int_{0}^{\infty}a_{1}(r;u)\Upsilon^{0}(\rho_{\xi})(\mathrm{d}u),
\end{align*}
since
\[
\int_{0}^{\infty}f(u)\Upsilon^{0}(\rho_{\xi})(\mathrm{d}u) =\int_{0}^{\infty
}\mathrm{e}^{-t}\mathrm{d}t\int_{0}^{\infty}f(ts)\rho_{\xi}(\mathrm{d}s)
\]
for every nonnegative measurable function $f$. It follows that
\[
\widetilde{\nu}(B)=\int_{\mathbb{S}}\lambda(\mathrm{d}\xi)\int_{0}^{\infty}
1_{B}(r\xi)\mathsf{a}_{1}(\Upsilon^{0}(\rho_{\xi}))(\mathrm{d}r),\quad
B\in\mathcal{B}(\mathbb{R}^{d}).
\]
Using \eqref{Ups0def2}, we see that $\widetilde{\nu}=\mathcal{A}_{1}
(\Upsilon^{0}(\rho))$. \qed

\vskip 3mm

The following result shows that $G(\mathbb{R}^{d})$ is the class of
distributions of stochastic integrals with respect L\'{e}vy processes with
type $A$ distribution at time $1.$ This is a multivariate and not necessarily
symmetric generalization of \eqref{Grep}.

\begin{thm}
\label{noncom1} Let
\[
\Psi_{-2,2}(\mu)=\mathcal{L}\left(  \int_{0}^{1}\left(  \log\frac{1}%
{t}\right)  ^{1/2}\mathrm{d}X_{t}^{(\mu)}\right)  ,\quad\mu\in I(\mathbb{R}%
^{d}).
\]
Then $\Psi_{-2,2}$ is one-to-one and
\begin{equation}
G(\mathbb{R}^{d})=\Psi_{-2,2}(A(\mathbb{R}^{d}))=\Psi_{-2,2}(\Phi_{\cos
}(I(\mathbb{R}^{d}))), \label{noncom1-2}%
\end{equation}
where $\Phi_{\cos}$ is defined by \eqref{StIntRep1}. In other words, for any
$\widetilde{\mu}\in G(\mathbb{R}^{d})$ there exists a L\'{e}vy process
$\left\{  X_{t}^{(\mu)}:t\geq0\right\}  $ with type $A$ distribution $\mu$ at
time $1$ such that
\begin{equation}
\widetilde{\mu}=\mathcal{L}\left(  \int_{0}^{1}\left(  \log t^{-1}\right)
^{1/2}\mathrm{d}X_{t}^{(\mu)}\right)  . \label{noncom1-2a}%
\end{equation}

\end{thm}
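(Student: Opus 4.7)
The plan is to reduce everything to the Lévy measure level and reuse the two key previous results: Theorem \ref{noncom} (commutation/non-commutation identity) and Theorem \ref{GenTGRep} (the characterization of $\mathfrak{M}_L^G(\mathbb{R}^d)$). First I would identify the transformation of Lévy measures induced by $\Psi_{-2,2}$. Since $f(t)=(\log(1/t))^{1/2}$ is square integrable on $[0,1]$ (indeed $\int_0^1\log(1/t)\,\mathrm{d}t=1$), the stochastic integral is well defined for every $\mu\in I(\mathbb{R}^d)$. Applying \eqref{stochint3} and substituting $s=(\log(1/t))^{1/2}$, so that $t=\mathrm{e}^{-s^2}$ and $\mathrm{d}t=-2s\,\mathrm{e}^{-s^2}\mathrm{d}s$, gives
\[
\widetilde{\nu}(B)=\int_{0}^{\infty}\nu(s^{-1}B)\,2s\,\mathrm{e}^{-s^{2}}\mathrm{d}s=\Upsilon_{-2,2}(\nu)(B),
\]
while \eqref{stochint2} yields $\widetilde{\Sigma}=\Sigma$. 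Thus $\Psi_{-2,2}$ acts on Lévy measures exactly as $\Upsilon_{-2,2}$.

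Next I would argue that $\Psi_{-2,2}$ is one-to-one. Given $\Psi_{-2,2}(\mu)$ with triplet $(\widetilde{\Sigma},\widetilde{\nu},\widetilde{\gamma})$, the Lévy measure $\nu$ is recovered by Proposition \ref{dom-2,2} (which asserts that $\Upsilon_{-2,2}$ is one-to-one on $\mathfrak{M}_L(\mathbb{R}^d)$), then $\Sigma=\widetilde{\Sigma}$, and finally $\gamma$ is uniquely determined by $\widetilde{\gamma}$ and $\nu$ via the affine relation \eqref{stochint4}.

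For the main equality \eqref{noncom1-2}, I would argue at the level of Lévy measures using Theorem \ref{noncom}. A distribution $\mu\in A(\mathbb{R}^d)$ has Lévy measure of the form $\nu=\mathcal{A}_1(\rho)$ for some (unique) $\rho\in\mathfrak{M}_L^1(\mathbb{R}^d)$, by definition of $A(\mathbb{R}^d)$ and Proposition \ref{Range1}. Then, applying Theorem \ref{noncom},
\[
\Upsilon_{-2,2}(\nu)=\Upsilon_{-2,2}(\mathcal{A}_1(\rho))=\mathcal{A}_1(\Upsilon^0(\rho)),
\]
and by Theorem \ref{GenTGRep}(c) the right-hand side describes exactly the class $\mathfrak{M}_L^G(\mathbb{R}^d)$ as $\rho$ ranges over $\mathfrak{M}_L^1(\mathbb{R}^d)$. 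Hence the Lévy measures of $\Psi_{-2,2}(A(\mathbb{R}^d))$ and of $G(\mathbb{R}^d)$ coincide. Neither class imposes constraints on the Gaussian covariance or the drift, so given any prescribed triplet $(\widetilde{\Sigma},\widetilde{\nu},\widetilde{\gamma})$ of $\widetilde{\mu}\in G(\mathbb{R}^d)$, the triplet $(\Sigma,\nu,\gamma)$ of a pre-image $\mu$ is obtained by choosing $\nu=\mathcal{A}_1(\rho)$ as above, setting $\Sigma=\widetilde{\Sigma}$ and solving \eqref{stochint4} for $\gamma$; the resulting $\mu$ lies in $A(\mathbb{R}^d)$ and satisfies $\Psi_{-2,2}(\mu)=\widetilde{\mu}$. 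Conversely, starting from $\mu\in A(\mathbb{R}^d)$ the same Lévy computation shows $\Psi_{-2,2}(\mu)\in G(\mathbb{R}^d)$. The second equality in \eqref{noncom1-2} is immediate from Theorem \ref{StIntRep}.

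The proof really has no hard obstacle: the computational step is the substitution $s=(\log(1/t))^{1/2}$ turning $\Psi_{-2,2}$ into $\Upsilon_{-2,2}$, and all the substantive content lives in Theorems \ref{noncom}, \ref{GenTGRep} and \ref{StIntRep}, which have already been established. The only point requiring mild care is the bookkeeping between $\Sigma,\gamma$ of $\mu$ and $\widetilde{\Sigma},\widetilde{\gamma}$ of $\widetilde{\mu}$, which is routine since the Gaussian and drift parts are unconstrained in both classes.
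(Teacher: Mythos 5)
Your proposal is correct and follows essentially the same route as the paper: identify the L\'evy-measure action of $\Psi_{-2,2}$ with $\Upsilon_{-2,2}$, combine Theorem \ref{noncom} with Theorem \ref{GenTGRep} to match $\Upsilon_{-2,2}(\mathcal{A}_1(\rho))=\mathcal{A}_1(\Upsilon^0(\rho))$ against $\mathfrak{M}_L^G(\mathbb{R}^d)$, handle $\Sigma,\gamma$ via \eqref{stochint2} and \eqref{stochint4}, and invoke Theorem \ref{StIntRep} and Proposition \ref{dom-2,2} for the second equality and injectivity. Your explicit substitution $t=\mathrm{e}^{-s^2}$ and the observation $\widetilde{\Sigma}=\Sigma$ are just slightly more detailed versions of steps the paper leaves implicit.
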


\noindent\textit{Proof.} Let $g(t)=\int_{t}^{\infty}2u\mathrm{e}^{-u^{2}%
}\mathrm{d}u=\mathrm{e}^{-t^{2}}$. Then the inverse function of $g$ is
$f(t)=\left(  \log t^{-1}\right)  ^{1/2}$ which is square-integrable on
$(0,1)$. Thus, $\Psi_{-2,2}(\mu)$ is definable for all $\mu$. Suppose that
$\widetilde{\mu}\in G(\mathbb{R}^{d})$ with triplet $(\widetilde{\Sigma
},\widetilde{\nu},\widetilde{\gamma})$. Then it follows from Theorems
\ref{GenTGRep} and \ref{noncom} that
\[
\widetilde{\nu}=\mathcal{A}_{1}(\Upsilon^{0}(\rho))=\Upsilon_{-2.2}
(\mathcal{A}_{1}(\rho))
\]
for some $\rho\in\mathfrak{M}_{L}^{1}(\mathbb{R}^{d})$. Let $\nu
=\mathcal{A}_{1}(\rho)$. Since $\widetilde{\nu}=\Upsilon_{-2,2}(\nu)$, we have
\eqref{stochint3} for the function $f(s)=\left(  \log s^{-1}\right)  ^{1/2}$
and $T=1$. Choose $\Sigma$ and $\gamma$ satisfying \eqref{stochint2} and
\eqref{stochint4}. Let $\mu\in I(\mathbb{R}^{d})$ having triplet $(\Sigma
,\nu,\gamma)$. Then $\mu\in A(\mathbb{R}^{d})$ and $\widetilde{\mu}%
=\Psi_{-2,2}(\mu)$. Conversely, we can see that if $\mu\in A(\mathbb{R}^{d})$,
then $\Psi_{-2,2}(\mu)\in G(\mathbb{R}^{d})$. Thus the first equality in
\eqref{noncom1-2} is proved. The second equality follows from
\eqref{StIntRep2} of Theorem \ref{StIntRep}. The one-to-one property of
$\Psi_{-2,2}$ follows from that of $\Upsilon_{-2,2}$ in Proposition
\ref{dom-2,2}. \qed

\vskip 3mm

\begin{rem}
(a) The two representations of $\widetilde{\mu}\in G(\mathbb{R}^{d})$ in
Theorems \ref{GenTGRep} and \ref{noncom1} are related in the following way.
Theorem \ref{noncom1} shows that $\widetilde{\mu}\in G(\mathbb{R}^{d})$ if and
only if $\widetilde{\mu}=\Upsilon_{-2.2}(\Phi_{\cos}(\mu))$ for some $\mu\in$
$I(\mathbb{R}^{d})$. This $\mu$ has L\'{e}vy measure $\rho^{(1/2)}$ if $\rho$
is the L\'{e}vy measure in the representation of $\widetilde{\mu}$ in Theorem
\ref{GenTGRep} (c). For the proof, use Proposition \ref{p-trRep}, Theorems
\ref{StIntRep} and \ref{noncom}.

(b) We have another representation of the class $G(\mathbb{R}^{d}).$ We
introduce the mapping $\mathcal{G}$ as follows. Let $h(t)=\int_{t}^{\infty
}\mathrm{e}^{-u^{2}}\mathrm{d}u,t>0,$ and denote its inverse function by
$h^{\ast}(s).$ For $\mu\in I(\mathbb{R}^{d})$, we define
\[
\mathcal{G}(\mu)=\mathcal{L}\left(  \int_{0}^{\sqrt{\pi}/2}h^{\ast
}(s)\mathrm{d}X_{s}^{(\mu)}\right)  .
\]
It is known that $G(\mathbb{R}^{d})=\mathcal{G}\left(  I(\mathbb{R}%
^{d})\right)  $, see Theorem 2.4 (5) in \cite{MS08}. This suggests us that
$\mathcal{G}$ is decomposed into
\begin{equation}
\mathcal{G=}\Psi_{-2.2}\circ\Phi_{\cos}=\Phi_{\cos}\circ\Psi_{-2.2}
\label{GDecomp}%
\end{equation}
with the same domain $I(\mathbb{R}^{d})$, where $\circ$ means composition of
mappings. This is verified as follows. By Corollary \ref{CorUpsA2},
$\mathcal{A}_{2}$ is an Upsilon transformation and $\mathcal{A}_{2}$
corresponds to $\Phi_{\cos}$ (see (\ref{StIntRep2})). Also, $\Upsilon_{-2.2}$
corresponds to the Upsilon transformation with the dilation measure
$\tau(\mathrm{d}x)=x^{2}\mathrm{e}^{-x^{2}}\mathrm{d}x$. By Proposition 4.1 in
\cite{BNRT08}, we have the second equality in (\ref{GDecomp}).
\end{rem}

\vskip 5mm
%%%%%%%%%%%%%%%%%%%%%%%%%%%%%%%%%%%%%%%%%%%%%%%%%%%%%%%%%%%%%%%

\subsection{$\mathcal{A}_{1}$ is not an Upsilon transformation}

%%%%%%%%%%%%%%%%%%%%%%%%%%%%%%%%%%%%%%%%%%%%%%%%%%%%%%%%%%%%%%%

By Theorem \ref{noncom}, we obtain the following remarkable result.

\begin{thm}
\label{noncom3} The transformation $\mathcal{A}_{1}$ is not an Upsilon
transformation $\Upsilon_{\tau}$ for any dilation measure $\tau$.
\end{thm}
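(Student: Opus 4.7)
\noindent\textit{Proof plan.} The natural strategy is proof by contradiction, using Theorem \ref{noncom} as the essential input. Suppose $\mathcal{A}_1 = \Upsilon_\tau$ for some dilation measure $\tau$ on $(0,\infty)$. Since $\Upsilon^0$ is itself an Upsilon transformation (with dilation measure $\mathrm{e}^{-u}\mathrm{d}u$), and any two Upsilon transformations commute on their common domain (this is exactly Proposition 4.1 of \cite{BNRT08}, which underlies the decomposition argument already used in the remark after Theorem \ref{noncom1}), we would have
\[
\mathcal{A}_1\circ\Upsilon^0 = \Upsilon_\tau\circ\Upsilon^0 = \Upsilon^0\circ\Upsilon_\tau = \Upsilon^0\circ\mathcal{A}_1
\]
wherever both sides are defined, in particular on $\mathfrak{M}_L^1(\mathbb R^d)$ (using Proposition \ref{Ups0} to ensure $\Upsilon^0$ preserves $\mathfrak{M}_L^1$).

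Combining this hypothetical identity with Theorem \ref{noncom}, which asserts $\mathcal{A}_1\circ\Upsilon^0=\Upsilon_{-2,2}\circ\mathcal{A}_1$ on $\mathfrak{M}_L^1(\mathbb R^d)$, we would deduce
\[
\Upsilon^0\circ\mathcal{A}_1 \;=\; \Upsilon_{-2,2}\circ\mathcal{A}_1 \qquad \text{on } \mathfrak{M}_L^1(\mathbb R^d).
\]
A contradiction will therefore be obtained by exhibiting even a single $\rho\in\mathfrak{M}_L^1(\mathbb R^d)$ for which $\Upsilon^0(\mathcal{A}_1(\rho))\neq\Upsilon_{-2,2}(\mathcal{A}_1(\rho))$.

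For the witness, the cleanest choice is a one-point measure along a fixed ray: fix $\xi_0\in\mathbb S$ and take $\rho=\delta_{\xi_0}$, so that $\mathcal{A}_1(\rho)$ has polar decomposition concentrated on the ray $\mathbb R_+\xi_0$ with radial density $2\pi^{-1}(1-r^2)^{-1/2}\mathbf 1_{(0,1)}(r)$. I would then compare the radial components of $\Upsilon^0(\mathcal{A}_1(\rho))$ and $\Upsilon_{-2,2}(\mathcal{A}_1(\rho))$ via Mellin transforms: since an Upsilon transformation with dilation measure $\tau$ acts on a radial measure $\sigma$ by $M_{\Upsilon_\tau(\sigma)}(s)=M_\tau(s)M_\sigma(s)$, and since $M_{\mathrm{e}^{-u}du}(s)=\Gamma(s+1)$ while $M_{2u\mathrm{e}^{-u^2}du}(s)=\Gamma((s+2)/2)$, the two sides agree only if $\Gamma(s+1)=\Gamma((s+2)/2)$ on the strip where $M_{\mathcal A_1(\rho)}(s)$ is nonzero. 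A short calculation of $M_{\mathcal A_1(\rho)}(s)=\pi^{-1/2}\Gamma((s+1)/2)/\Gamma((s+2)/2)$ shows this Mellin transform is analytic and nonvanishing on a strip, so the identity would force $\Gamma(s+1)\equiv\Gamma((s+2)/2)$, which already fails at $s=1$.

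The main obstacle I anticipate is the appeal to commutativity of Upsilon transformations in the first step: one must verify that Proposition 4.1 of \cite{BNRT08} applies to the hypothetical $\Upsilon_\tau$ paired with $\Upsilon^0$ on a domain broad enough to include $\mathfrak{M}_L^1(\mathbb R^d)$, so that the equation $\Upsilon^0\circ\mathcal{A}_1=\Upsilon_{-2,2}\circ\mathcal{A}_1$ actually holds on a set where we can produce a witness. Once that point is settled, the Mellin-transform contradiction in the final step is routine, and the concrete choice $\rho=\delta_{\xi_0}$ makes all integrals explicit.
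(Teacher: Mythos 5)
Your proposal is correct and follows essentially the same route as the paper: assume $\mathcal{A}_1=\Upsilon_\tau$, deduce that $\mathcal{A}_1$ commutes with $\Upsilon^0$, combine this with Theorem \ref{noncom} to force $\Upsilon^0=\Upsilon_{-2,2}$ on the range of $\mathcal{A}_1$, and refute that with a one-point witness --- your Mellin-transform comparison at $s=1$ is exactly the paper's first-moment computation $1\neq 2^{-1}\pi^{1/2}$. The only difference is that the paper establishes the commutation $\mathcal{A}_1\circ\Upsilon^0=\Upsilon^0\circ\mathcal{A}_1$ by a short direct Fubini calculation under the hypothesis $\mathcal{A}_1=\Upsilon_\tau$, which disposes of the obstacle you flag about invoking Proposition 4.1 of \cite{BNRT08}.
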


\noindent\textit{Proof.} Suppose that there is a measure $\tau$ on
$(0,\infty)$ such that
\[
\mathcal{A}_{1}(\rho)(B)=\int_{0}^{\infty}\rho(u^{-1}B)\tau(\mathrm{d}u)
\quad\text{for }B\in\mathfrak{M}_{L}^{1}(\mathbb{R}^{d}).
\]
Then, we can show that
\[
\mathcal{A}_{1}(\Upsilon^{0}(\rho))=\Upsilon^{0}(\mathcal{A}_{1}(\rho))
\quad\text{for }\rho\in\mathfrak{M}_{L}^{1}(\mathbb{R}^{d}).
\]
Indeed, for any nonnegative measurable function $f$
\begin{align*}
\int_{\mathbb{R}^{d}}f(x)\mathcal{A}_{1}(\rho)(\mathrm{d}x)  &  =\int%
_{0}^{\infty}\tau(\mathrm{d}u)\int_{\mathbb{R}^{d}}f(ux)\rho(\mathrm{d}x),\\
\int_{\mathbb{R}^{d}}f(y)\Upsilon^{0}(\rho)(\mathrm{d}y)  &  =\int_{0}%
^{\infty}\mathrm{e}^{-v}\mathrm{d}v\int_{\mathbb{R}^{d}}f(vy)\rho
(\mathrm{d}y),
\end{align*}
and
\[
\mathcal{A}_{1}(\Upsilon^{0}(\rho))(B)=\int_{0}^{\infty}\tau(\mathrm{d}u)
\int_{0}^{\infty}\mathrm{e}^{-v}\mathrm{d}v\int_{\mathbb{R}^{d}}
1_{B}(uvx)\rho(\mathrm{d}x)=\Upsilon^{0}(\mathcal{A}_{1}(\rho))(B).
\]
Then, it follows from Theorem \ref{noncom} that
\[
\Upsilon_{-2,2}(\mathcal{A}_{1}(\rho))=\Upsilon^{0}(\mathcal{A}_{1}
(\rho))\quad\text{for }\rho\in\mathfrak{M}_{L}^{1}(\mathbb{R}^{d}).
\]
Let $\widetilde{\rho}=\mathcal{A}_{1}(\rho).$ If $\int_{\mathbb{R}^{d}%
}\left\vert x\right\vert \rho(\mathrm{d}x)<\infty$, then
\[
\int_{\mathbb{R}^{d}}x\Upsilon^{0}(\widetilde{\rho})(\mathrm{d}x) =\int%
_{0}^{\infty}\mathrm{e}^{-u}\mathrm{d}u\int_{\mathbb{R}^{d}}ux \widetilde{\rho
}(\mathrm{d}x)=\int_{\mathbb{R}^{d}}x\widetilde{\rho}(\mathrm{d}x)
\]
and
\begin{align*}
&  \int_{\mathbb{R}^{d}}x\Upsilon_{-2,2}(\widetilde{\rho}) (\mathrm{d}%
x)=\int_{0}^{\infty}2u\mathrm{e}^{-u^{2}}\mathrm{d}u \int_{\mathbb{R}^{d}%
}ux\widetilde{\rho}(\mathrm{d}x)\\
&  \qquad=\int_{0}^{\infty}2u^{2}\mathrm{e}^{-u^{2}}\mathrm{d}u \int%
_{\mathbb{R}^{d}}x\widetilde{\rho}(\mathrm{d}x)=2^{-1}\pi^{1/2} \int%
_{\mathbb{R}^{d}}x\widetilde{\rho}(\mathrm{d}x).
\end{align*}
Hence $\Upsilon_{-2,2}(\widetilde{\rho})\neq \Upsilon^{0}(\widetilde{\rho})$
whenever $\int_{\mathbb{R}^{d}}x\widetilde{\rho}(\mathrm{d}x)\neq0$ (for
example, choose $\rho=\delta_{e_{1}},e_{1}=(1,0,...,0)$)$.$ This is a
contradiction. Hence the measure $\tau$ does not exist. \qed

\vskip 10mm
%%%%%%%%%%%%%%%%%%%%%%%%%%%%%%%%%%%%%%%%%%%%%

\section{Examples}

%%%%%%%%%%%%%%%%%%%%%%%%%%%%%%%%%%%%%%%%%%%%%%

We conclude this paper with examples for Theorems \ref{GenTGRep} and
\ref{noncom}, where the modified Bessel function $K_{0}$ plays an important
role in the L\'{e}vy measure of infinitely divisible distributions. We only
consider the one-dimensional case of L\'{e}vy measures concentrated on
$(0,\infty).$ Multivariate extensions are possible by using polar decomposition.

By the well-known formula for the modified Bessel functions we have
\[
K_{0}(x)=\frac{1}{2}\int_{0}^{\infty}\mathrm{e}^{-t-x^{2}/(4t)}t^{-1}
\mathrm{d}t,\text{ }x>0.\label{exmp1}%
\]
An alternative expression is%

\begin{equation}
K_{0}(x)=\int_{1}^{\infty}(t^{2}-1)^{-1/2}\mathrm{e}^{-xt} \mathrm{d}t,\text{
}x>0, \label{exmp2}%
\end{equation}
see (3.387.3) in \cite[p.350]{GR07}. It follows that $K_{0}(x)$ is completely
monotone on $(0,\infty)$ and that $\int_{0}^{\infty}K_{0}(x)\mathrm{d}%
x=\pi/2.$

The Laplace transform of $K_{0}$ in $x>0$ is%

\begin{equation}
\label{K0}\varphi_{K_{0}}(s):=\int_{0}^{\infty}\mathrm{e}^{-sx}K_{0}%
(x)\mathrm{d}x=%
\begin{cases}
(1-s^{2})^{-1/2}{\arccos(s)}, & 0<s<1\\
1, & s=1\\
(1-s^{2})^{-1/2}\log(s+(s^{2}-1)^{1/2}), & s>1,
\end{cases}
\end{equation}
see (6.611.9) in \cite[p.695]{GR07}.

\vskip3mm

The following is an example of $\nu$ and $\widetilde{\nu}$ in Theorem
\ref{GenTGRep} (b).

\begin{ex}
\label{ex1} Let
\[
\widetilde{\nu}(\mathrm{d}x)=K_{0}(x)1_{(0,\infty)}(x)\mathrm{d}%
x\label{exmp3a}%
\]
and
\begin{equation}
\nu(\mathrm{d}x)=4^{-1}{\pi}{x^{-1/2}}{\mathrm{e}^{-x^{1/2}}}1_{(0,\infty)}(x)
\mathrm{d}x. \label{exmp4}%
\end{equation}
Then $\nu\in\mathfrak{M}_{L}^{B}(\mathbb{R})\cap\mathfrak{M}_{L}%
^{1}(\mathbb{R})$, and $\widetilde{\nu}=\mathcal{A}_{1}(\nu)\in\mathfrak{M}%
_{L}^{G}(\mathbb{R})$.
\end{ex}

The proof is as follows. Since the function ${x^{-1/2}{\mathrm{e}^{-x^{1/2}}}%
}$ is completely monotone on $(0,\infty)$ and $\int_{0}^{1}x\nu(\mathrm{d}%
x)<\infty$, we have $\nu\in\mathfrak{M}_{L}^{B}(\mathbb{R})\cap\mathfrak{M}%
_{L}^{1}(\mathbb{R})$. Theorem \ref{UpsIdent}, with $k=1,$ gives that for
$B\in\mathcal{B}(\mathbb{R})$
\begin{align*}
\mathcal{A}_{1}(\nu)(B)  &  =\int_{0}^{1}\nu^{(1/2)}(u^{-1}B)2\pi^{-1}
(1-u^{2})^{-1/2}\mathrm{d}u\\
&  =\int_{0}^{1}2\pi^{-1}(1-u^{2})^{-1/2}\mathrm{d}u\int_{0}^{\infty}
1_{u^{-1}B}(s^{1/2})\nu(\mathrm{d}s)\\
&  =\int_{0}^{1}2^{-1}(1-u^{2})^{-1/2}\mathrm{d}u\int_{0}^{\infty}
1_{B}(us^{1/2}){s^{-1/2}}{\mathrm{e}^{-s^{1/2}}}\mathrm{d}s\\
&  =\int_{0}^{1}(1-u^{2})^{-1/2}\mathrm{d}u\int_{0}^{\infty}1_{B}
(r)\mathrm{e}^{-r/u}\mathrm{dr}\\
&  =\int_{0}^{\infty}1_{B}(r)\mathrm{d}r\int_{1}^{\infty}(y^{2}-1)^{-1/2}
\mathrm{e}^{-ry}\mathrm{d}y\\
&  =\int_{0}^{\infty}1_{B}(r)K_{0}(r)\mathrm{d}r=\widetilde{\nu}(B).
\end{align*}
The fact that $\widetilde{\nu}\in\mathfrak{M}_{L}^{G}(\mathbb{R})$ can also be
shown directly, since $K_{0}({x}^{1/2})$ is again completely monotone in
$x\in(0,\infty)$.

It follows from $\widetilde{\nu}\in\mathfrak{M}_{L}^{G}(\mathbb{R})$ that
$\widetilde{\nu}$ is the L\'{e}vy measure of some generalized type $G$
distribution $\widetilde{\mu}$ on $\mathbb{R}$. Using (\ref{K0}), we find that
this $\widetilde{\mu}$ is supported on $[0,\infty)$ if and only if it has
Laplace transform
\begin{equation}
\int_{\lbrack0,\infty)}\mathrm{e}^{-sx}\widetilde{\mu}(\mathrm{d}x)
=\exp\left\{  -\gamma_{0}s+\varphi_{K_{0}}(s)-2^{-1}{\pi}\right\} \nonumber
\end{equation}
for some $\gamma_{0}\geq0.$

\vskip 3mm

\begin{rem}
$\mathcal{A}_{1}(\nu)$ in Example \ref{ex1} actually belongs to a smaller
class $\mathfrak{M}_{L}^{B}(\mathbb{R})$. Therefore, in connection to Theorem
\ref{GenTGRep}, it might be interesting to find a necessary and sufficient
condition on $\nu$ for that $\widetilde{\mu}\in B(\mathbb{R}^{d})$. The $\nu$
in Example \ref{ex1} also belongs to a smaller class than $\mathfrak{M}%
_{L}^{B}(\mathbb{R})\cap\mathfrak{M}_{L}^{1}(\mathbb{R})$. It belongs to the
class of L\'{e}vy measures of distributions in $\mathfrak{R}(\Psi_{-1/2})$
studied in Theorem 4.2 of \cite{Sa06b}.
\end{rem}

\vskip3mm We now give an example of $\rho$ in Theorem \ref{GenTGRep} (c).

\begin{ex}
\label{ex2} Consider the following L\'{e}vy measure in $\mathfrak{M}_{L}%
^{B}(\mathbb{R})$:
\begin{equation}
\rho(\mathrm{d}x)=4^{-1}{{\pi}^{1/2}}{x^{-1/2}}{\mathrm{e}^{-x/4}}
1_{(0,\infty)}(x)\mathrm{d}x. \label{exmp5}%
\end{equation}
Then $\nu$ in (\ref{exmp4}) satisfies $\nu=\Upsilon^{0}(\rho).$
\end{ex}

To prove this, we compute the Upsilon transformation $\Upsilon^{0}$ of $\rho$
as follows:
\begin{align*}
\Upsilon^{0}(\rho)(\mathrm{d}x)  &  =\int_{0}^{\infty}\rho(u^{-1}
\mathrm{d}x)\mathrm{e}^{-u}\mathrm{d}u\\
&  =4^{-1}{{\pi}^{1/2}}{x^{-1/2}}\left(  \int_{0}^{\infty}{u^{-1/2}}
\mathrm{e}^{-u-x/(4u)}\mathrm{d}u\right)  \mathrm{d}x.
\end{align*}
By formula (3.475.15) in \cite[pp 369]{GR07}, we have
\[
\int_{0}^{\infty}{u^{-1/2}}\mathrm{e}^{-u-x/(4u)} \mathrm{d}u={\pi}%
^{1/2}\mathrm{e}^{-x^{1/2}}.
\]
Hence, $\Upsilon^{0}(\rho)(\mathrm{d}x)= 4^{-1}\pi x^{-1/2}{\mathrm{e}%
^{-x^{1/2}}}\mathrm{d}x$ and from (\ref{exmp4}) we have $\nu=\Upsilon^{0}%
(\rho).$

\vskip 3mm

Since $\mathcal{A}_{1}(\nu)=$ $\mathcal{A}_{1}(\Upsilon^{0}(\rho
))=\Upsilon_{-2,2}(\mathcal{A}_{1}(\rho))$ by Theorem \ref{noncom},
$\mathcal{A}_{1}(\rho)$ is also of interest.

\begin{ex}
Let $\rho$ be as in (\ref{exmp5}). Then
\begin{equation}
\mathcal{A}_{1}(\rho)(\mathrm{d}x)={2^{-1}{\pi}^{-1/2}}\mathrm{e}^{-x^{2}%
/8}K_{0}(x^{2}/8)1_{(0,\infty)}(x)\mathrm{d}x.\label{exmp6}%
\end{equation}

\end{ex}

The proof is as follows. We have
\begin{align*}
\mathcal{A}_{1}(\rho)(B)  &  =\int_{0}^{1}\rho^{(1/2)}(u^{-1}B)2\pi
^{-1}(1-u^{2})^{-1/2}\mathrm{d}u\\
&  =\int_{0}^{1}2\pi^{-1}(1-u^{2})^{-1/2}\mathrm{d}u\int_{0}^{\infty}
1_{u^{-1}B}(s^{1/2})\rho(\mathrm{d}s)\\
&  ={2^{-1}{\pi}^{-1/2}}\int_{0}^{1}(1-u^{2})^{-1/2}\mathrm{d}u\int%
_{0}^{\infty} 1_{u^{-1}B}(s^{1/2}){s^{-1/2}}{\mathrm{e}^{-s/4}} \mathrm{d}s\\
&  ={{\pi}^{-1/2}}\int_{0}^{\infty}1_{B}(r)\mathrm{d}r\int_{0}^{1}
u^{-1}(1-u^{2})^{-1/2}\mathrm{e}^{-r^{2}/(4u^{2})}\mathrm{d}u\\
&  ={2^{-1}{\pi}^{-1/2}}\int_{0}^{\infty}1_{B}(r)\mathrm{d}r \int_{1}^{\infty
}y^{-1/2}(y-1)^{-1/2}\mathrm{e}^{-r^{2}y/4}\mathrm{d}y.
\end{align*}
Use (3.383.3) in \cite[pp 347]{GR07} to obtain
\[
\int_{1}^{\infty}y^{-1/2}(y-1)^{-1/2}\mathrm{e}^{-r^{2}y/4}\mathrm{d}%
y=\mathrm{e}^{-r^{2}/8}K_{0}(r^{2}/8).
\]
Thus we obtain (\ref{exmp6}).

\begin{rem}
The $\rho$ in (\ref{exmp5}) also belongs to $\mathfrak{M}_{L}^{B}%
(\mathbb{R})\cap\mathfrak{M}_{L}^{1}(\mathbb{R})$. Therefore $\mathcal{A}%
_{1}(\rho)$\textbf{ }itself is another example of the L\'{e}vy measure of a
generalized type $G$ distribution on $\mathbb{R}$.
\end{rem}

%%%%%%%%%%%%%%%%%%%%%%%%%%%%%%%%%%%%%%%%%%%%%%%%%%%%%%%%%%%%%%%%%%%%%%%%%%%%%%%%%%%%%%%%%%%%%%%
%%%%%%%%%%%%%%%%%%%%%%%%%%%%%%%%%%%%%%%%%%%%%%%%%%%%%%%%%%%%%%%%%%%%%%%%%%%%%%%%%%%%%%%%%%%%%%%
\vskip3mm

%%%%%%%%%%%%%%%%%%%%%%%%%%%%%%%%%%%%%%%%%%%%%%%%%%%%%%%%%%%%%%%%%%%%%%%%%%%%%%%%%%%%%%%%%%%%%%%
%%%%%%%%%%%%%%%%%%%%%%%%%%%%%%%%%%%%%%%%%%%%%%%%%%%%%%%%%%%%%%%%%%%%%%%%%%%%%%%%%%%%%%%%%%%%%%%

\end{document}